 \theoremstyle{plain}    
 \newtheorem{thm}{Theorem}[section]
 \numberwithin{equation}{section} %% Comment out for sequentially-numbered
 \numberwithin{figure}{section} %% Comment out for sequentially-numbered
 \theoremstyle{plain}
 \theoremstyle{plain}    
 \newtheorem{cor}[thm]{Corollary} %%Delete [thm] to re-start numbering
 \theoremstyle{plain}    
 \newtheorem{lem}[thm]{Lemma} %%Delete [thm] to re-start numbering
 \theoremstyle{plain}    
 \newtheorem{prop}[thm]{Proposition} %%Delete [thm] to re-start numbering
\begin{document}

\title{The Horn Conjecture for Sums of Compact Selfadjoint Operators}

\author{H. Bercovici, W.S. Li and D. Timotin}

\thanks{The first author was supported in part by grants from the National
Science Foundation. The second and third authors thank Indiana University for its hospitality while this paper was written.}

\begin{abstract}
We determine the possible eigenvalues of compact selfadjoint operators
$A,B^{(1)},B^{(2)},\dots,B^{(m)}$ with the property that $A=B^{(1)}+B^{(2)}+\cdots+B^{(m)}$.
When all these operators are positive, the eigenvalues were known
to be subject to certain inequalities which extend Horn's inequalities
from the finite-dimensional case when $m=2$. We find the proper extension
of the Horn inequalities and show that they, along with their reverse
analogues, provide a complete characterization. Our results also allow
us to discuss the more general situation where only some of the eigenvalues
of the operators $A$ and $B^{(k)}$ are specified. A special case
is the requirement that $B^{(1)}+B^{(2)}+\cdots+B^{(k)}$ be positive
of rank at most $\rho\ge1$.
\end{abstract}
\maketitle

\section{Introduction}

Given an $N\times N$ complex Hermitian matrix $A$, we denote by
$\Lambda(A)$ the sequence\[
\lambda_{1}(A)\ge\lambda_{2}(A)\ge\cdots\ge\lambda_{N}(A)\]
of its eigenvalues, in decreasing order and listed according to their
multiplicities. A. Horn \cite{horn} conjectured a characterization
of the set of triples $(\alpha,\beta,\gamma)\in(\mathbb{R}^{N})^{3}$
with the property that there exist Hermitian matrices $A,B,C$ such
that $\Lambda(A)=\alpha,\Lambda(B)=\beta$, $\Lambda(C)=\gamma$,
and $A=B+C$. More precisely, for every integer $r\in\{1,2,\dots,N-1\}$
he introduced a collection $T_{r}^{N}$ of triples $(I,J,K)$ of subsets
of $\{1,2,\dots,N\}$ such that $|I|=|J|=|K|=r$; here we use $|I|$
to denote the cardinality of $I$. We will call \emph{Horn triples}
the elements $(I,J,K)$ of $T_{r}^{N}$. Horn conjectured that the
triples $(\Lambda(A),\Lambda(B),\Lambda(C))$ (with $A=B+C$) are
precisely those triples $(\alpha,\beta,\gamma)$ of decreasing sequences
satisfying the \emph{trace identity} \[
\sum_{i=1}^{N}\alpha_{i}=\sum_{j=1}^{N}\beta_{j}+\sum_{k=1}^{N}\gamma_{k},\]
and the \emph{Horn inequalities} \begin{equation}
\sum_{i\in I}\alpha_{i}\le\sum_{j\in J}\beta_{j}+\sum_{k\in K}\gamma_{k}\label{ineq:Horn}\end{equation}
for all Horn triples $(I,J,K)\in T_{r}^{N}$ with $r<N$. The study
and eventual proof of this conjecture are chronicled by W. Fulton
in his excellent survey \cite{Ful-BAMS}.

Assume now that $A$ is a positive compact operator on a complex Hilbert
space. In this case we denote by \[
\Lambda_{+}(A)=\{\lambda_{1}(A)\ge\lambda_{2}(A)\ge\cdots\}\]
the sequence of its eigenvalues, in decreasing order and repeated
according to their multiplicities. Note that $\lambda_{j}(A)>0$ for
all $j$ when $A$ has infinite rank, so that the possible eigenvalue
$0$ is not listed in this case. The problem of characterizing the
triples $(\Lambda_{+}(B+C),\Lambda_{+}(B),\Lambda_{+}(C))$ when $B$
and $C$ are positive compact operators was studied by S. Friedland
\cite{key-230} and Fulton \cite{Ful-LAA}. They observed that, if
$(\alpha,\beta,\gamma)=(\Lambda_{+}(B+C),\Lambda_{+}(B),\Lambda_{+}(C))$,
the Horn inequalities (\ref{ineq:Horn}) must be satisfied for all
$(I,J,K)\in T_{r}^{N}$, for all $N\ge1$ and all $r<N$. Conversely,
if $(\alpha,\beta,\gamma)$ is a triple of decreasing sequences tending
to zero, and they satisfy all the Horn inequalities, then there exist
positive compact operators $A,B,C$ such that $\Lambda_{+}(A)=\alpha$,
$\Lambda_{+}(B)=\beta$, $\Lambda_{+}(C)=\gamma$, and $A\le B+C$.
If in addition\[
\sum_{i=1}^{\infty}\alpha_{i}=\sum_{j=1}^{\infty}\beta_{j}+\sum_{k=1}^{\infty}\gamma_{k}<\infty,\]
we must have $A=B+C$. If we only know that $\sum_{j=1}^{\infty}\beta_{j}<\infty$,
replacing this trace identity by the conditions $\gamma\le\alpha$
and\[
\sum_{j=1}^{\infty}\beta_{j}=\sum_{k=1}^{\infty}(\alpha_{k}-\gamma_{k})\]
leads to analogous results; this is shown in \cite{BLS}. When all
of these sums are infinite the trace identities are useless. The results
of \cite{BLS} do provide a characterization even in this case, based
on an extension of the Littlewood-Richardson rule.

We will show that the trace identity can be replaced in general by
a system of inequalities. To describe these inequalities we need some
notation. Given a finite set $I\subset\mathbb{N}=\{1,2,\dots\}$,
we denote by $I^{c}=\mathbb{N}\setminus I$ the complement of $I$,
and by $I_{p}^{c}$ the set consisting of the $p$ smallest elements
of $I^{c}$. The following statement is a particular case of Theorem
\ref{thm:positive-horn}. It will be convenient to consider that $(\varnothing,\varnothing,\varnothing)$
is a Horn triple.

\begin{thm}
\label{thm:a=3Db+c}Let $\alpha,\beta,\gamma$ be three decreasing
sequences with limit zero. The following conditions are equivalent:
\begin{enumerate}
\item There exist positive compact operators $B$ and $C$ such that $\Lambda_{+}(B+C)=\alpha$,
$\Lambda_{+}(B)=\beta$ and $\lambda_{+}(C)=\gamma$. 
\item For every Horn triple $(I,J,K)$, and for all positive integers $p,q$
we have the Horn inequality\[
\sum_{i\in I}\alpha_{i}\le\sum_{j\in J}\beta_{j}+\sum_{k\in K}\gamma_{k},\]
and the extended reverse Horn inequality\[
\sum_{i\in I_{p+q}^{c}}\alpha_{i}\ge\sum_{j\in J_{p}^{c}}\beta_{j}+\sum_{k\in K_{q}^{c}}\gamma_{k}.\]

\end{enumerate}
\end{thm}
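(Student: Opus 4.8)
The plan is to reduce the equivalence to the finite-dimensional Horn theorem plus a truncation argument, handling the new "extended reverse" inequalities by passing to complements. The implication (1) $\Rightarrow$ (2) should be the easy direction. For the direct Horn inequalities one uses the classical fact (observed by Friedland and Fulton) that for any finite Hermitian compression the Horn inequalities survive; more precisely, given $B,C$ positive compact with sum $A$, for any Horn triple $(I,J,K)$ of subsets of $\{1,\dots,N\}$ one picks $N$ large and applies the inequality to suitable finite-rank spectral truncations of $A,B,C$, using that truncation only decreases things in a controlled way together with Weyl-type monotonicity. For the extended reverse inequality, the trick is that $\sum_{i\in I_{p+q}^c}\alpha_i$ is a sum over a co-finite-type index set, and "reverse Horn" inequalities are just Horn inequalities read in the other direction: if $(I,J,K)\in T_r^N$ then the complementary triple belongs to $T_{N-r}^N$, and the trace identity in finite dimensions converts $\sum_{i\in I}\alpha_i \le \sum_{j\in J}\beta_j+\sum_{k\in K}\gamma_k$ into the reverse inequality on the complements. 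In the infinite-dimensional compact setting there is no trace identity, but one can still run this on a finite truncation to dimension $N$ large enough that $I\cup J\cup K\subset\{1,\dots,N\}$ and $p,q$ are accommodated; the parameters $p,q$ appear precisely because one is free to choose how many of the "small" eigenvalues of the truncation of $B$ versus $C$ to include. I would make this bookkeeping precise and then let $N\to\infty$, using that $\alpha,\beta,\gamma\to 0$ so the tails are negligible.

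For (2) $\Rightarrow$ (1), the strategy is a compactness/limiting construction. First, truncate: for each $N$ let $\alpha^{(N)},\beta^{(N)},\gamma^{(N)}$ be the first $N$ entries, and seek Hermitian $N\times N$ matrices realizing approximate versions. The obstacle is that the finite truncations $(\alpha^{(N)},\beta^{(N)},\gamma^{(N)})$ need not satisfy the finite trace identity, so the classical Horn theorem does not directly apply. This is exactly where the extended reverse Horn inequalities earn their keep: I expect that from the hypothesis (2) one can produce, for each $N$, real numbers $\alpha_i'\ge 0$ with $\alpha_i' \le \alpha_i$ (or a suitable perturbation), supported on the first $N$ coordinates, so that $(\alpha',\beta^{(N)},\gamma^{(N)})$ does satisfy both the finite Horn inequalities and the finite trace identity — the reverse inequalities guaranteeing there is enough "room below" $\alpha$ to absorb the discrepancy $\sum\beta^{(N)}+\sum\gamma^{(N)} - \sum\alpha^{(N)}$. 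Then apply the finite Horn theorem (Klyachko/Knutson–Tao, available as cited via Fulton's survey) to obtain $N\times N$ positive matrices $A_N=B_N+C_N$ with these spectra, and finally extract a limit: arrange the $B_N,C_N$ as operators on a fixed separable Hilbert space, use a diagonal argument and the uniform decay of eigenvalues to get convergence in operator norm (hence of the sums), yielding compact positive $B,C$ with the prescribed spectra and $B+C$ having spectrum $\alpha$.

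The main obstacle I anticipate is the middle step of (2) $\Rightarrow$ (1): showing that the extended reverse Horn inequalities, with the two free parameters $p,q$, are exactly strong enough to guarantee the existence of the corrected sequence $\alpha'$ (equivalently, to certify solvability of the finite problem with an inequality $A_N \le B_N+C_N$ promoted to equality after adjusting the top sequence). This is really a statement that a certain finite system of linear inequalities is consistent, and I would prove it by an extremal/LP-duality argument: assume no valid $\alpha'$ exists, extract from Farkas' lemma a violated inequality, and show that violated inequality is — after translating through the Horn/complement correspondence — one of the hypotheses in (2), a contradiction. Since the paper states this is a special case of Theorem~\ref{thm:positive-horn}, I expect the actual proof to set up the general $m$-operator framework first and deduce this $m=2$ statement; but for a self-contained proof of the stated theorem the LP-consistency lemma above is the crux, and everything else (truncation estimates, the $N\to\infty$ compactness extraction) is routine analysis once that lemma is in hand.
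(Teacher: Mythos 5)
Your high-level skeleton matches the paper in the easy direction and in the general shape of the hard direction (truncate, solve finite problems, extract a norm-convergent subsequence), but the crux of $(2)\Rightarrow(1)$ — which you correctly identify as the crux — is not proved, and the route you propose for it is both different from the paper's and not obviously workable as stated.

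For $(1)\Rightarrow(2)$: yes, the paper proves the extended Horn inequalities by compressing to a finite-rank projection whose range contains enough eigenvectors (Proposition~\ref{Prop:infinite-horn}), then applying the finite Horn inequalities to the compressions together with Lemma~\ref{lemma:inserting-gaps} to insert "gaps" that produce the parameters $q_k$; the reverse inequalities come by applying the same argument to $-A,-B^{(k)}$ and the equivalence of (2) and (3) in Theorem~\ref{Theo:Horn-conj-m}. Your sketch is essentially this.

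For $(2)\Rightarrow(1)$, the paper does \emph{not} produce a single corrected sequence $\alpha'$ satisfying the finite Horn inequalities plus the trace identity. Instead, the key technical tool is the finite-dimensional interpolation result (Proposition~\ref{prop:interpolation-finiteN}): one builds, for each $n$, a \emph{pair} of bracketing vectors $\alpha'(n)\le\alpha''(n)$ (and similarly for each $\beta$), where in the positive case $\alpha'(n)$ keeps the first $n$ entries of $\alpha$ and pads with zeros, and $\alpha''(n)$ keeps the first $mn$ entries and pads with zeros. The extended direct Horn inequalities give the finite Horn inequalities for the lower bracket, the extended \emph{reverse} inequalities give the complementary inequalities for the upper bracket, and — crucially — Proposition~\ref{Propo:facts-about-Tnr}(3) guarantees that the index-bookkeeping $p+q\le r$ works out so that these finite inequalities can be extracted from your hypothesis. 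Proposition~\ref{prop:interpolation-finiteN} is then proved not by Farkas/LP-duality but by an explicit moving-parameter induction: slide from the upper bracket toward the lower along a segment, stop at the first point where some Horn inequality becomes an equality, and use the resulting direct-sum decomposition (the case-of-equality phenomenon of Horn) to reduce to a smaller $N$.

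Two concrete problems with your sketch. First, the direction of the correction is off: you say you expect to lower $\alpha$ to $\alpha'\le\alpha$ to "absorb" the discrepancy $\sum\beta^{(N)}+\sum\gamma^{(N)}-\sum\alpha^{(N)}$, but that discrepancy is typically positive and absorbing it by \emph{decreasing} $\alpha$ makes the trace identity harder, not easier; one must pad with extra zeros (allowed to rise) and/or lower the tails of $\beta,\gamma$, which is exactly the interval structure that the interpolation proposition encodes. Second, the Farkas argument is only a plan: to make it work you would need to show that the Farkas certificate for infeasibility of your system decomposes into a nonnegative combination of Horn facets of the Knutson–Tao cone together with box constraints, and that such a combination reassembles into a violated extended (reverse) Horn inequality from hypothesis (2). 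That is a nontrivial combinatorial statement about the cone, not a routine LP observation; it is essentially equivalent in difficulty to what Proposition~\ref{prop:interpolation-finiteN} proves by direct construction. As it stands, the proposal leaves the central step asserted but unproven.
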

The extended reverse Horn inequalities may at first seem to be altogether
wrong, since the index sets are of different cardinalities. It is
important to remember though that the problem we consider is not invariant
under addition of constant multiples of the identity operator to $B$
and $C$, which is the case in finite dimensions. These cardinalities
are in fact equalized in the more general situation of compact selfadjoint
operators. We will consider more than two summands, as is also done
in \cite{Ful-LAA,key-230}. Thus, we produce a characterization of
the set of all $(m+1)$-tuples $(\Lambda_{0}(A),\Lambda_{0}(B^{(1)}),\Lambda_{0}(B^{(2)}),\dots,\Lambda_{0}(B^{(m)}))$,
where the $B^{(j)}$ are compact selfadjoint operators and $A=\sum_{j=1}^{m}B^{(j)}$.
The eigenvalue sequence $\Lambda_{0}(A)$ will be described in Section
4 when $A$ is a compact selfadjoint operator. This question can naturally
be put in a more symmetric form by setting $A^{(0)}=-B$, so that
$\sum_{j=0}^{m}B^{(j)}=0$. The finite-dimensional version of these
problems is discussed in Fulton \cite{Ful-BAMS}, and it will be described
in Section 2 below. A more general problem was considered by Buch
\cite{buch}, and we will also extend his results to the compact selfadjoint
case. In fact, we will find necessary and sufficient conditions for
the existence of compact selfadjoint operators $B^{(k)}$ and $A=\sum_{k=1}^{m}B^{(k)}$
when their eigenvalues are only partially specified. This is a new
result even in the matrix case, and it answers a question posed in
\cite{Ful-BAMS}.

The remainder of the paper is organized as follows. In Section 2 we
describe in detail the Horn conjecture in finite dimensions, and we
deduce some important properties of Horn triples. In Section 3 we
prove a finite-dimensional interpolation result whose infinite-dimensional
analogue yields the Horn conjecture in Section 4. Section 5 contains
the discussion of matrices and operators with partially specified
eigenvalues. The case of positive operators is discussed in Section
6, where the extension to $m\ge2$ of Theorem \ref{thm:a=3Db+c} is
proved. We also dicuss briefly the hive formulation of the Littlewood-Richardson
rule.

\section{Horn Inequalities in Finite Dimensions}

It will occasionally be convenient to view a set $I=\{ i_{1}<i_{2}<\cdots<i_{r}\}$
of natural numbers as a function $I:\{1,2,\dots,r\}\to\mathbb{N}$,
i.e., $I(\ell)=i_{\ell}$. Thus, if $I'\subset\{1,2,\dots,r\}$, there
is a well-defined subset $I\circ I'\subset I$. We will also set $[n]=\{1,2,\dots,n\}$
for $n\in\mathbb{N}$.

Fix an integer $m\ge2$. Given integers $N,r$ such that $N\ge0$
and $0\le r\le N$, we define a collection $T_{r}^{N}(m+1)$ of $(m+1)$-tuples
$(I,J^{(1)},J^{(2)},\dots,J^{(m)})$ of subsets of $[N]$ such that
$|I|=|J^{(1)}|=\cdots=|J^{(m)}|=r$, $0\le r\le N$; when $m=2$,
these will be precisely the Horn triples. We proceed by induction
on $N$. When $N=0$, there is only one set to define: $T_{0}^{0}(m+1)$
consists of the $(m+1)$-tuple $(\varnothing,\varnothing,\dots,\varnothing)$.
Assume that the sets $T_{s}^{M}$ have been defined for all $M<N$
and $0\le s\le M$. The $(m+1)$-tuples $(I,J^{(1)},J^{(2)},\dots,J^{(m)})$
in $T_{r}^{N}(m+1)$ are then subject to the requirements\begin{equation}
\sum_{\ell=1}^{r}(I(\ell)-\ell)=\sum_{k=1}^{m}\sum_{\ell=1}^{r}(J^{(k)}(\ell)-\ell),\label{equation-horn-m-tuples}\end{equation}
and\begin{equation}
\sum_{\ell=1}^{s}(I\circ I'(\ell)-\ell)\ge\sum_{k=1}^{m}\sum_{\ell=1}^{s}(J^{(k)}\circ J'^{(k)}(\ell)-\ell)\label{equ-m-horn-ineq}\end{equation}
for every $s<r$ and every $(I',J'^{(1)},\dots,J'^{(m)})\in T_{s}^{r}(m+1)$.
Observe that the set $T_{N}^{N}(m+1)$ consists of the $(m+1)$-tuple
defined by $I=J^{(1)}=\cdots=J^{(m)}=[N]$. We will also consider
the larger set $\overline{T}_{r}^{N}(m+1)$ of $(m+1)$-tuples for
which the identity (\ref{equation-horn-m-tuples}) is replaced by\[
\sum_{\ell=1}^{r}(I(\ell)-\ell)\ge\sum_{k=1}^{m}\sum_{\ell=1}^{r}(J^{(k)}(\ell)-\ell).\]

The following result (in an equivalent form) is proved in \cite{Ful-LAA}.
For a set $I\subset[N]$ we denote $I_{\text{sym}}=\{ N+1-i:i\in I\}$.

\begin{thm}
\label{Theo:Horn-conj-m}Let $\alpha,\beta^{(1)},\beta^{(2)}\dots,\beta^{(m)}\in\mathbb{R}^{N}$
be decreasing vectors. The following conditions are equivalent:
\begin{enumerate}
\item There exist Hermitian $N\times N$-matrices $A,B^{(1)},B^{(2)},\dots,B^{(m)}$
such that $\Lambda(A)=\alpha$, $\Lambda(B^{(k)})=\beta^{(k)}$ for
$k=1,2,\dots,m$, and $A\le B^{(1)}+B^{(2)}+\cdots+B^{(m)}$.
\item For every $r\le N$ and every $(I,J^{(1)},J^{(2)},\dots,J^{(m)})\in T_{r}^{N}(m+1)$
we have the Horn inequality\[
\sum_{\ell=1}^{r}\alpha_{I(\ell)}\le\sum_{k=1}^{m}\sum_{\ell=1}^{r}\beta_{J^{(k)}(\ell)}.\]

\item For every $r\le N$ and every $(I,J^{(1)},J^{(2)},\dots,J^{(m)})\in T_{r}^{N}(m+1)$
we have the inequality\[
\sum_{i\notin I_{\text{{\rm sym}}}}\alpha_{i}\le\sum_{k=1}^{m}\sum_{j\notin I_{{\rm sym}}^{(k)}}\beta_{j}.\]

\end{enumerate}
\end{thm}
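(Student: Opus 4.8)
The plan is to obtain this from the (by now established) resolution of the Horn conjecture for several summands, so that the only genuinely new work is translation. For the equivalence $(1)\Leftrightarrow(2)$ I would quote the main theorem of \cite{Ful-LAA}, which deals precisely with the majorization problem $A\le B^{(1)}+\cdots+B^{(m)}$, and then verify that the recursively defined family $T_{r}^{N}(m+1)$ indexes the same list of inequalities (this reconciliation of equivalent formulations is what ``in an equivalent form'' refers to). The dictionary I would use is the standard bijection between $r$-subsets of $[N]$ and partitions inside an $r\times(N-r)$ rectangle: to $I=\{i_{1}<i_{2}<\cdots<i_{r}\}$ assign the partition $\lambda(I)$ with parts $i_{r}-r\ge i_{r-1}-(r-1)\ge\cdots\ge i_{1}-1$, so that $\sum_{\ell=1}^{r}(I(\ell)-\ell)=|\lambda(I)|$. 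Under this dictionary the identity (\ref{equation-horn-m-tuples}) becomes $|\lambda(I)|=\sum_{k=1}^{m}|\lambda(J^{(k)})|$ and the recursive system (\ref{equ-m-horn-ineq}) becomes exactly the generalized Horn recursion for $m$ summands. Running an induction on $N$ and quoting Klyachko's inequalities together with the Knutson--Tao saturation theorem (see \cite{Ful-BAMS}), I would then identify membership $(I,J^{(1)},\dots,J^{(m)})\in T_{r}^{N}(m+1)$ with positivity of the Littlewood--Richardson coefficient $c^{\lambda(I)}_{\lambda(J^{(1)})\cdots\lambda(J^{(m)})}$, which is the indexing set for the inequalities of \cite{Ful-LAA}. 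I would also point out, since this is exactly the feature that separates the present problem from the finite-dimensional $A=B+C$ problem, that $A\le B^{(1)}+\cdots+B^{(m)}$ is the sum problem $A=B^{(1)}+\cdots+B^{(m)}+(-C)$ with $-C\le0$ an unconstrained negative semidefinite summand; eliminating the eigenvalues of $-C$ from the cone of eigenvalue data for sums of $m+1$ Hermitian matrices removes the trace identity, leaving only its rank-$N$ avatar $\sum_{i}\alpha_{i}\le\sum_{k}\sum_{j}\beta^{(k)}_{j}$, and this elimination is carried out in \cite{Ful-LAA}.

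For $(2)\Leftrightarrow(3)$ no analysis is involved; it is pure relabeling. First I would note the elementary identity $\{i\in[N]:i\notin I_{\mathrm{sym}}\}=[N]\setminus I_{\mathrm{sym}}=(I^{c})_{\mathrm{sym}}$, so that the inequality in (3) attached to $(I,J^{(1)},\dots,J^{(m)})\in T_{r}^{N}(m+1)$ is literally $\sum_{i\in(I^{c})_{\mathrm{sym}}}\alpha_{i}\le\sum_{k=1}^{m}\sum_{j\in((J^{(k)})^{c})_{\mathrm{sym}}}\beta^{(k)}_{j}$, a sum over index sets of cardinality $N-r$. Then I would invoke the symmetry of Horn tuples: the componentwise complement-and-reflect map $(I,J^{(1)},\dots,J^{(m)})\mapsto((I^{c})_{\mathrm{sym}},((J^{(1)})^{c})_{\mathrm{sym}},\dots,((J^{(m)})^{c})_{\mathrm{sym}})$ is an involution carrying $T_{r}^{N}(m+1)$ bijectively onto $T_{N-r}^{N}(m+1)$. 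This I would prove by the same induction on $N$ that defines the collections $T_{r}^{N}(m+1)$ --- checking that (\ref{equation-horn-m-tuples}) and (\ref{equ-m-horn-ineq}) survive complementation and reflection, using the inductive hypothesis on the smaller rectangles --- and it is in any case among the structural properties of Horn tuples that belong in Section 2. Granting it, as $r$ runs over $\{0,1,\dots,N\}$ the family of inequalities in (3) is precisely a reindexing of the family in (2) (the cases $r=0$ and $r=N$ being trivial, and the trace inequality $\sum_{i}\alpha_{i}\le\sum_{k}\sum_{j}\beta^{(k)}_{j}$ occurring as the $r=N$ member of (2) and the $r=0$ member of (3)), so the two conditions coincide.

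Granting \cite{Ful-LAA} --- hence Klyachko's theorem, the Knutson--Tao saturation theorem, and Fulton's reduction of the majorization problem --- I do not expect a real obstacle; the difficulty is bookkeeping. The step I would watch most closely is making the dictionary precise: keeping the shifts $I(\ell)-\ell$ and the reflection $i\mapsto N+1-i$ consistent, confirming that (\ref{equ-m-horn-ineq}) is exactly the generalized Horn recursion and not a variant of it, and making sure it is the majorization form $A\le\sum_{k}B^{(k)}$ --- with its one-sided rank-$N$ inequality in place of a trace identity --- that is matched against the cited result, since several equivalent descriptions (Horn recursion, Littlewood--Richardson positivity, the symmetric formulation $\sum_{k}B^{(k)}=0$) are in circulation and it is easy to conflate them. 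The other step that deserves care is the complement-and-reflect bijection, which is the hinge of $(2)\Leftrightarrow(3)$; but it is a known fact whose proof is a short induction.
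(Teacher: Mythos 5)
The paper gives no proof of this theorem at all: it simply records that the result, in an equivalent form, appears in \cite{Ful-LAA}, and then observes (still without proof) that conditions (2) and (3) are not merely equivalent but the same collection of inequalities. Your proposal takes the same route, supplying the translation between the recursive definition of $T_r^N(m+1)$ and Fulton's Littlewood--Richardson indexing for $(1)\Leftrightarrow(2)$, and supplying the complement-and-reflect bijection $T_r^N(m+1)\to T_{N-r}^N(m+1)$ for $(2)\Leftrightarrow(3)$; the latter is exactly the stronger assertion the paper alludes to and declines to prove. The one place you undersell the work is the claim that this bijection follows by a short induction checking that (\ref{equation-horn-m-tuples}) and (\ref{equ-m-horn-ineq}) survive complementation and reflection: (\ref{equation-horn-m-tuples}) does survive, since $\pi((I^c)_{\mathrm{sym}})$ is the conjugate partition $\pi(I)'$ and conjugation preserves size, but (\ref{equ-m-horn-ineq}) quantifies over restricted sub-tuples cut out by elements of smaller $T$'s, and its invariance under complement-reflect is not a one-line verification. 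The clean justification is the Grassmannian duality $G(N,r)\cong G(N,N-r)$, under which $\omega_I$ corresponds to $\omega_{(I^c)_{\mathrm{sym}}}$ and the cup-product characterization of $T_r^N(m+1)$ given by the paper right after the theorem is manifestly preserved, or equivalently the conjugate symmetry $c^{\lambda}_{\mu_1\cdots\mu_m}=c^{\lambda'}_{\mu_1'\cdots\mu_m'}$ of Littlewood--Richardson coefficients together with Corollary \ref{cor:Horn-indices-are-eigenvalues}.
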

Note that the requirement (2) (for $r=N$) or (3) (for $r=0$) yields
\[
\sum_{\ell=1}^{N}\alpha_{\ell}\le\sum_{k=1}^{m}\sum_{\ell=1}^{N}\beta_{\ell}.\]
 Replacing this requirement by the trace condition\[
\sum_{\ell=1}^{N}\alpha_{\ell}=\sum_{k=1}^{m}\sum_{\ell=1}^{N}\beta_{\ell}\]
implies the equality $A=B^{(1)}+\cdots+B^{(m)}$. It can also be shown
that conditions (2) and (3) are not merely equivalent: they are precisely
the same. We will not need this stronger assertion, so we do not include
a proof. The interested reader will be able to supply an inductive
argument based on the definion of the sets $T_{r}^{N}$.

An immediate consequence of this result is an alternate characterization
of the elements of $T_{r}^{N}(m+1)$. We can associate with each subset
$I\subset\mathbb{N}$ of cardinality $r$ an integer partition $\pi(I)$
of length $r$ as follows:

\[
\pi(I)=\{ I(r)-r\ge I(r-1)-(r-1)\ge\cdots\ge I(1)-1\}.\]

\begin{cor}
\label{cor:Horn-indices-are-eigenvalues}Let $I,J^{(1)},\dots,J^{(m)}\subset\{1,2,\dots,N\}$
be such that $|I|=|J^{(1)}|=\cdots=|J^{(m)}|=r$. The following conditions
are equivalent:
\begin{enumerate}
\item $(I,J^{(1)},\dots,J^{(m)})\in T_{r}^{N}(m+1)$ $($respectively, $(I,J^{(1)},\dots,J^{(m)})\in\overline{T}_{r}^{N}(m+1)$$)$.
\item There exist Hermitian $r\times r$-matrices $A,B^{(1)},B^{(2)},\dots,B^{(m)}$
such that $\Lambda(A)=\pi(I)$, $\Lambda(B^{(k)})=\pi(J^{(k)})$ for
$k=1,2,\dots,m$, and $A=B^{(1)}+B^{(2)}+\cdots+B^{(m)}$ $($respectively,
$A\ge B^{(1)}+B^{(2)}+\cdots+B^{(m)}$$)$.
\end{enumerate}
\end{cor}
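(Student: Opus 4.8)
The plan is to read off both equivalences from Theorem~\ref{Theo:Horn-conj-m}, applied in dimension $r$ not to the partitions $\pi(I),\pi(J^{(k)})$ themselves but to their order-reversed negatives, which are precisely the decreasing vectors for which that theorem is phrased. Given $I,J^{(1)},\dots,J^{(m)}\subset\{1,2,\dots,N\}$ with $|I|=|J^{(1)}|=\cdots=|J^{(m)}|=r$, set
\[
\widehat{\alpha}_{j}=j-I(j),\qquad \widehat{\beta}^{(k)}_{j}=j-J^{(k)}(j)\qquad(1\le j\le r).
\]
Since $I$ and each $J^{(k)}$ are strictly increasing, $\widehat{\alpha}$ and the $\widehat{\beta}^{(k)}$ are decreasing vectors in $\mathbb{R}^{r}$, and a direct check (reverse the order of the eigenvalues and change their sign) shows that an $r\times r$ Hermitian matrix $A$ satisfies $\Lambda(A)=\pi(I)$ exactly when $\Lambda(-A)=\widehat{\alpha}$. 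Hence, replacing each matrix by its negative, the existence of Hermitian $r\times r$ matrices $A,B^{(1)},\dots,B^{(m)}$ with $\Lambda(A)=\pi(I)$, $\Lambda(B^{(k)})=\pi(J^{(k)})$ and $A\ge B^{(1)}+\cdots+B^{(m)}$ is equivalent to the existence of Hermitian $r\times r$ matrices $\widehat{A},\widehat{B}^{(1)},\dots,\widehat{B}^{(m)}$ with $\Lambda(\widehat{A})=\widehat{\alpha}$, $\Lambda(\widehat{B}^{(k)})=\widehat{\beta}^{(k)}$ and $\widehat{A}\le\widehat{B}^{(1)}+\cdots+\widehat{B}^{(m)}$; the same equivalence holds with every $\ge$ and $\le$ replaced by $=$.

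Next I would apply Theorem~\ref{Theo:Horn-conj-m} with $N$ replaced by $r$ to the vectors $\widehat{\alpha},\widehat{\beta}^{(1)},\dots,\widehat{\beta}^{(m)}$ and unwind its condition~(2). Fix $s\le r$ and $(I',J'^{(1)},\dots,J'^{(m)})\in T_{s}^{r}(m+1)$. Since $\widehat{\alpha}_{I'(\ell)}=I'(\ell)-I\circ I'(\ell)$, and similarly for the $\widehat{\beta}^{(k)}$, the inequality $\sum_{\ell=1}^{s}\widehat{\alpha}_{I'(\ell)}\le\sum_{k=1}^{m}\sum_{\ell=1}^{s}\widehat{\beta}^{(k)}_{J'^{(k)}(\ell)}$ becomes
\[
\sum_{k=1}^{m}\sum_{\ell=1}^{s}J^{(k)}\circ J'^{(k)}(\ell)-\sum_{\ell=1}^{s}I\circ I'(\ell)\;\le\;\sum_{k=1}^{m}\sum_{\ell=1}^{s}J'^{(k)}(\ell)-\sum_{\ell=1}^{s}I'(\ell).
\]
Writing $\sum_{\ell=1}^{s}X'(\ell)=\sum_{\ell=1}^{s}(X'(\ell)-\ell)+s(s+1)/2$ and using the defining identity~(\ref{equation-horn-m-tuples}) for the tuple $(I',J'^{(1)},\dots,J'^{(m)})\in T_{s}^{r}(m+1)$, the right-hand side collapses to $(m-1)s(s+1)/2$; the displayed inequality is therefore exactly~(\ref{equ-m-horn-ineq}) for that same tuple when $s<r$, while for $s=r$, where the only tuple has $I'=J'^{(1)}=\cdots=J'^{(m)}=[r]$, it reduces to $\sum_{\ell=1}^{r}(I(\ell)-\ell)\ge\sum_{k=1}^{m}\sum_{\ell=1}^{r}(J^{(k)}(\ell)-\ell)$. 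Consequently condition~(2) of Theorem~\ref{Theo:Horn-conj-m} for $\widehat{\alpha},\widehat{\beta}^{(1)},\dots,\widehat{\beta}^{(m)}$ holds if and only if $(I,J^{(1)},\dots,J^{(m)})\in\overline{T}_{r}^{N}(m+1)$.

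Combining these two observations with the equivalence of conditions~(1) and~(2) in Theorem~\ref{Theo:Horn-conj-m} immediately yields that $(I,J^{(1)},\dots,J^{(m)})\in\overline{T}_{r}^{N}(m+1)$ if and only if there exist Hermitian $r\times r$ matrices $A,B^{(1)},\dots,B^{(m)}$ with $\Lambda(A)=\pi(I)$, $\Lambda(B^{(k)})=\pi(J^{(k)})$ and $A\ge B^{(1)}+\cdots+B^{(m)}$. For the statement about $T_{r}^{N}(m+1)$ one additionally imposes the reverse of the trace inequality found above, turning~(\ref{equation-horn-m-tuples}) into an equality; on the matrix side this says $\mathrm{tr}(\widehat{A})=\sum_{k=1}^{m}\mathrm{tr}(\widehat{B}^{(k)})$, and together with $\widehat{A}\le\sum_{k=1}^{m}\widehat{B}^{(k)}$ it forces $\widehat{A}=\sum_{k=1}^{m}\widehat{B}^{(k)}$, i.e.\ $A=B^{(1)}+\cdots+B^{(m)}$; the converse implication follows by taking traces. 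This proves Corollary~\ref{cor:Horn-indices-are-eigenvalues}.

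The only step requiring care is the computation in the second paragraph: keeping track of the shifts $\ell\leftrightarrow I'(\ell)$ and of the point at which the recursive identity~(\ref{equation-horn-m-tuples}) at level $s$ (available because $(I',J'^{(1)},\dots,J'^{(m)})\in T_{s}^{r}(m+1)$) is invoked, together with the separate treatment of the terminal case $s=r$. Nothing here is deep, but this is precisely where Horn's recursive definition of the sets $T_{r}^{N}(m+1)$ is matched against the eigenvalue combinatorics of sums of Hermitian matrices.
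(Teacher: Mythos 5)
Your proof is correct and follows essentially the same approach as the paper's: the paper likewise subtracts the trace identity \eqref{equation-horn-m-tuples} at level $s$ from \eqref{equ-m-horn-ineq}, observes that the resulting inequalities are condition~(2) of Theorem~\ref{Theo:Horn-conj-m} for the (reversed, negated) vectors $-\pi(I),-\pi(J^{(k)})$ — your $\widehat{\alpha},\widehat{\beta}^{(k)}$ — and then invokes the equivalence of conditions~(1) and~(2) together with negation of the matrices. You have simply made the index bookkeeping and the $s=r$ terminal case explicit where the paper says ``the corollary follows easily.''
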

\begin{proof}
The trace identity \[
\sum_{\ell=1}^{s}(I'(\ell)-\ell)=\sum_{k=1}^{m}\sum_{\ell=1}^{s}(J^{\prime(k)}(\ell)-\ell)\]
shows that condition (\ref{equ-m-horn-ineq}) can be rewritten as\[
\sum_{\ell=1}^{s}(I\circ I'(\ell)-I'(\ell))\ge\sum_{k=1}^{m}\sum_{\ell=1}^{s}(J^{(k)}\circ J'^{(k)}(\ell)-J'^{(k)}(\ell)).\]
Therefore the $r$-tuples $-\pi(I),-\pi(J^{(1)}),\dots,-\pi(J^{(m)})$
satisfy condition (2) of Theorem \ref{Theo:Horn-conj-m}. Moreover,
(\ref{equation-horn-m-tuples}) is precisely the trace identity for
these tuples. The corollary follows easily from these observations.
\end{proof}
\begin{cor}
\label{cor:I-subset-of-first-n}Assume that $(I,J^{(1)},\dots,J^{(m)})\in\overline{T}_{r}^{N}(m+1)$
and $I\subset[n]$ for some $n<N$. Then we also have $J^{(k)}\subset[n]$
for $j=1,2,\dots,m$.
\end{cor}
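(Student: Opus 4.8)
The plan is to reduce the statement to a monotonicity property of eigenvalues via Corollary~\ref{cor:Horn-indices-are-eigenvalues}. Since $(I,J^{(1)},\dots,J^{(m)})\in\overline{T}_{r}^{N}(m+1)$, that corollary furnishes Hermitian $r\times r$ matrices $A,B^{(1)},\dots,B^{(m)}$ with $\Lambda(A)=\pi(I)$, $\Lambda(B^{(k)})=\pi(J^{(k)})$ for $k=1,\dots,m$, and $A\ge B^{(1)}+\cdots+B^{(m)}$. The first observation is that each $B^{(k)}$ is positive semidefinite: the parts of $\pi(J^{(k)})$ are the numbers $J^{(k)}(\ell)-\ell$, which are nonnegative because $J^{(k)}(1)<J^{(k)}(2)<\cdots$ are distinct positive integers, so $J^{(k)}(\ell)\ge\ell$ for every $\ell$.

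Granting this, fix $j\in\{1,\dots,m\}$. Since the remaining summands $B^{(k)}$ with $k\ne j$ are positive, we get $A\ge\sum_{k=1}^{m}B^{(k)}\ge B^{(j)}$, and monotonicity of the largest eigenvalue for the Loewner order yields $\lambda_{1}(A)\ge\lambda_{1}(B^{(j)})$. Now $\lambda_{1}(B^{(j)})$ is the largest part of $\pi(J^{(j)})$, namely $J^{(j)}(r)-r$, while the hypothesis $I\subset[n]$ forces the largest element of $I$ to satisfy $I(r)\le n$, whence $\lambda_{1}(A)=I(r)-r\le n-r$. Combining the two estimates gives $J^{(j)}(r)-r\le n-r$, i.e. the largest element of $J^{(j)}$ is at most $n$, which is exactly the assertion $J^{(j)}\subset[n]$.

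The only real content is the passage through Corollary~\ref{cor:Horn-indices-are-eigenvalues} together with the remark that membership in $\overline{T}_{r}^{N}(m+1)$ makes the associated matrices $B^{(k)}$ positive; after that the argument is a one-line comparison of top eigenvalues. A direct attack on the single defining inequality $\sum_{\ell=1}^{r}(I(\ell)-\ell)\ge\sum_{k=1}^{m}\sum_{\ell=1}^{r}(J^{(k)}(\ell)-\ell)$ is too lossy once $r\ge2$, since its left-hand side can be as large as $r(n-r)$ while a single violation $J^{(j)}\not\subset[n]$ only pushes the right-hand side up by roughly $n-r$; so one genuinely needs the full strength of the Horn-type inequalities built into $\overline{T}_{r}^{N}(m+1)$, which is precisely what Corollary~\ref{cor:Horn-indices-are-eigenvalues} packages. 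Note also that the hypothesis $n<N$ serves only to make the conclusion non-vacuous and is not otherwise used.
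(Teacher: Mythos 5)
Your proof is correct, and it is essentially the argument the paper intends (Corollary~\ref{cor:I-subset-of-first-n} is stated immediately after Corollary~\ref{cor:Horn-indices-are-eigenvalues} with no further commentary, so the eigenvalue realization is the expected tool). You correctly identify that each $\pi(J^{(k)})$ is a genuine partition with nonnegative parts since $J^{(k)}(\ell)\ge\ell$, which makes the matrices $B^{(k)}$ positive semidefinite; this is the key point that lets you drop the other summands and compare top eigenvalues of $A$ and a single $B^{(j)}$. Your concluding remark, that a direct attack on the single top-level inequality defining $\overline{T}_{r}^{N}(m+1)$ is too coarse once $r\ge2$ and that one genuinely needs the full family of Horn inequalities packaged by the eigenvalue characterization, is accurate and worth making; your observation that $n<N$ serves only to avoid triviality is also correct.
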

It should be noted that the results in \cite{Ful-BAMS} are actually
formulated in symmetric form, i.e., the operator $A$ is replaced
by $B^{(0)}=-A$, so that the condition on these operators is $\sum_{k=0}^{m}B^{(k)}=0$
(or $\le0$). The passage from one formulation to the other is straightforward.
Denote indeed $\alpha=\Lambda(A),\beta^{(0)}=\Lambda(B^{(0)})$, and
observe that for every subset $I\subset\{1,2,\dots,N\}$ we have\[
\sum_{i\in I}\alpha_{i}=-\sum_{i\in I_{\text{sym}}}\beta_{i}^{(0)}.\]
 If $|I|=r$, we also have\[
\sum_{\ell=1}^{r}(I(\ell)-\ell)+\sum_{\ell=1}^{r}(I_{\text{sym}}(\ell)-\ell))=r(N-r).\]
This allows rewriting the definition of $T_{r}^{N}(m+1)$ in terms
of the sets $J^{(0)}=I_{\text{sym}},J^{(1)},\dots,J^{(m)}$. We prefer
however the less symmetric version of these results. One reason is
that we have $T_{r}^{N}(m+1)\subset T_{r}^{N+1}(m+1)$, and therefore
we obtain Horn inequalities which are valid for matrices of arbitrary
size. In fact, $T_{r}^{N}(m+1)$ consists precisely of those $(m+1)$-uples
in $T_{r}^{N+1}(m+1)$ which are contained in $[N]$. Another reason
is that we can assume that all the operators are positive.

The elements of $T_{r}^{N}(m+1)$ and $\overline{T}_{r}^{N}(m+1)$
also have a cohomological interpretation, related with the ring structure
of the cohomology of the Grassmannian $G(N,r)$ of $r$-dimensional
subspaces of $\mathbb{C}^{N}$. Since this connection was crucial
in the proof of the Horn conjecture, we describe it in more detail.
Fix subspaces $X_{1}\subset X_{2}\subset\cdots\subset X_{N}=\mathbb{C}^{N}$
with $\dim X_{j}=j$, and a subset $I=\{ i_{1}<i_{2}<\cdots<i_{r}\}$
of $[N]$. The associated Schubert cell\[
S=\{ M\in G(N,r):\dim(M\cap X_{i_{j}})\ge j\text{ for }j=1,2,\dots,r\}\]
determines a homology class $\eta_{I}\in H_{*}(G(N,r))$ which is
independent of the choice of $X_{j}$. Moreover, $H_{*}(G(N,r))$
is the free abelian group generated by the classes $\eta_{I}$ as
$I$ runs over all subsets $I\subset[N]$ with $|I|=r$. The particular
set $I=[r]$ corresponds with the class of one point. The cohomology
ring $H^{*}(G(N,r))$ has a dual basis $\omega_{I}$ indexed by $I\subset N$,
$|I|=r$, i.e., $\langle\eta_{I},\omega_{J}\rangle=\delta_{IJ}$.
The set $\overline{T}_{r}^{N}(m+1)$ consists of those $(m+1)$-tuples
$(I,J^{(1)},\dots,J^{(m)})$ such that the product\begin{equation}
\omega_{I}\omega_{J_{\text{sym}}^{(1)}}\cdots\omega_{J_{\text{sym}}^{(m)}}\label{eq:omega-product}\end{equation}
is not equal to zero, while $T_{r}^{N}(m+1)$ is characterized by
the fact that this product is a nonzero multiple of the class $\omega_{[r]}$
of one point. One can also identify a smaller class $\dot{T}_{r}^{N}(m+1)\subset T_{r}^{N}(m+1)$
corresponding with products (\ref{eq:omega-product}) which are exactly
equal to $\omega_{[r]}$. As seen in \cite{belk,Ful-BAMS}, the Horn
inequalities corresponding with $\dot{T}_{r}^{N}(m+1),r=1,2,\dots,N-1$,
are independent, and they imply (for decreasing sequences) all the
Horn inequalities corresponding with $T_{r}^{N}(m+1)$, or even $\overline{T}_{r}^{N}(m+1)$.
We will always formulate our results in terms of the sets $T$, but
generally one can use either $\overline{T}$ (if one wants to deduce
more inequalities) or $\dot{T}$ (if one wants minimal hypotheses).

We conclude this section with a few useful properties of the sets
$T$ and $\overline{T}$.

\begin{lem}
\label{lemma:inserting-gaps}Fix $(I,J^{(1)},J^{(2)},\dots,J^{(m)})\in T_{r}^{N}(m+1)$,
integers $0\le q,q_{1},\dots,q_{m}\le r$ such that $q=\sum_{k=1}^{m}q_{k}\le r$,
and an additional $M\in\mathbb{N}$. Denote by $I'$ the set obtained
by increasing the largest $q$ elements by $M$, i.e.,\[
I'(\ell)=\begin{cases}
I(\ell) & \text{if }\ell\le r-q,\\
I(\ell)+M & \text{if }\ell>r-q.\end{cases}\]
Analogously, define $J'^{(k)}$ by\[
J'^{(k)}(\ell)=\begin{cases}
J^{(k)}(\ell) & \text{if }\ell\le r-q_{k},\\
J^{(k)}(\ell)+M & \text{if }\ell>r-q_{k}.\end{cases}\]
 for $k=1,2,\dots,m$. Then $(I',J'^{(1)},J'^{(2)},\dots,J'^{(m)})\in T_{r}^{N+M}(m+1)$.
\end{lem}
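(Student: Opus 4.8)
The plan is to transfer the problem to Hermitian $r\times r$ matrices by means of Corollary~\ref{cor:Horn-indices-are-eigenvalues}. First I would record the effect of the modification on partitions: a direct computation from the definition of $\pi$ shows that passing from $I$ to $I'$ replaces $\pi(I)$ by the sequence obtained from it by adding $M$ to each of its $q$ largest parts, and likewise $\pi(J'^{(k)})$ is obtained from $\pi(J^{(k)})$ by adding $M$ to each of its $q_k$ largest parts. In particular $\pi(I')$ and the $\pi(J'^{(k)})$ are again partitions with at most $r$ parts and largest part at most $(N+M)-r$ (since $I(r)\le N$ and $J^{(k)}(r)\le N$), so $I'$ and $J'^{(k)}$ are genuine $r$-element subsets of $[N+M]$. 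By Corollary~\ref{cor:Horn-indices-are-eigenvalues} it therefore suffices to produce Hermitian $r\times r$ matrices $\tilde A,\tilde B^{(1)},\dots,\tilde B^{(m)}$ with $\Lambda(\tilde A)=\pi(I')$, $\Lambda(\tilde B^{(k)})=\pi(J'^{(k)})$ for all $k$, and $\tilde A=\tilde B^{(1)}+\cdots+\tilde B^{(m)}$.

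For $0\le p\le r$, write $\chi_p$ for the vector in $\mathbb R^r$ whose first $p$ coordinates equal $1$ and whose remaining coordinates vanish, so that the observation above reads $\pi(I')=\pi(I)+M\chi_q$ and $\pi(J'^{(k)})=\pi(J^{(k)})+M\chi_{q_k}$. Applying Corollary~\ref{cor:Horn-indices-are-eigenvalues} to the given tuple and then Theorem~\ref{Theo:Horn-conj-m} (with ambient dimension $r$), together with the equality $A=\sum_k B^{(k)}$ coming from the Corollary, we know that $\pi(I),\pi(J^{(1)}),\dots,\pi(J^{(m)})$ satisfy the trace identity $\sum_{\ell=1}^r\pi(I)_\ell=\sum_{k=1}^m\sum_{\ell=1}^r\pi(J^{(k)})_\ell$ and the Horn inequality $\sum_{\ell=1}^s\pi(I)_{P(\ell)}\le\sum_{k=1}^m\sum_{\ell=1}^s\pi(J^{(k)})_{Q^{(k)}(\ell)}$ for every $s\le r$ and every $(P,Q^{(1)},\dots,Q^{(m)})\in T_s^r(m+1)$. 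I claim the shifted vectors satisfy the same conditions; granting this, the converse direction of Theorem~\ref{Theo:Horn-conj-m} produces Hermitian matrices with these eigenvalue sequences and $\tilde A\le\sum_k\tilde B^{(k)}$, and the trace identity upgrades this to $\tilde A=\sum_k\tilde B^{(k)}$ (as noted after that theorem), whence Corollary~\ref{cor:Horn-indices-are-eigenvalues} gives $(I',J'^{(1)},\dots,J'^{(m)})\in T_r^{N+M}(m+1)$. The trace identity for the shifted vectors is immediate from $q=q_1+\cdots+q_m$. For the Horn inequality attached to $(P,Q^{(1)},\dots,Q^{(m)})\in T_s^r(m+1)$, note that $\sum_{\ell=1}^s(\chi_p)_{P(\ell)}=|P\cap[p]|$, so subtracting the corresponding inequality for the original vectors reduces the claim to
\[ |P\cap[q]|\le\sum_{k=1}^m|Q^{(k)}\cap[q_k]|. \]

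To prove this, recall that $q=q_1+\cdots+q_m\le r$, so one may choose pairwise orthogonal subspaces $V_1,\dots,V_m\subset\mathbb C^r$ with $\dim V_k=q_k$; let $\Pi_k$ be the orthogonal projection onto $V_k$ and $\Pi=\Pi_1+\cdots+\Pi_m$ the orthogonal projection onto $V_1\oplus\cdots\oplus V_m$. Then $\Lambda(\Pi_k)=\chi_{q_k}$, $\Lambda(\Pi)=\chi_q$, and $\Pi=\sum_k\Pi_k$, so Theorem~\ref{Theo:Horn-conj-m} applied to $\Pi,\Pi_1,\dots,\Pi_m$ and the tuple $(P,Q^{(1)},\dots,Q^{(m)})$ yields exactly the displayed inequality. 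The degenerate cases $q=0$ (whence $I'=I$), some $q_k=0$ (whence $V_k=\{0\}$, $\Pi_k=0$), and $r=0$ are trivial. The only step that requires an idea is the reduction of each shifted Horn inequality to the combinatorial estimate $|P\cap[q]|\le\sum_k|Q^{(k)}\cap[q_k]|$ and the realization that this estimate is itself a Horn inequality, for a suitable system of orthogonal projections; I expect that projection trick to be the crux, the rest being bookkeeping about $\pi$.
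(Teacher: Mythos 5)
Your proof is correct and follows the same overall strategy as the paper's: pass to the eigenvalue characterization via Corollary~\ref{cor:Horn-indices-are-eigenvalues}, note that the passage from $\pi(I)$ to $\pi(I')$ (and from $\pi(J^{(k)})$ to $\pi(J'^{(k)})$) is a shift by $M$ times an indicator vector, and exploit the fact that the Horn conditions (the linear inequalities together with the trace identity) are preserved under addition. The paper, however, reduces by induction to the case $q=1$ with, say, $q_1=1$ and $q_k=0$ for $k>1$; in that case the shift direction is $(M,0,\dots,0)$, which trivially satisfies the Horn conditions since $(M,0,\dots,0)=(M,0,\dots,0)+0+\cdots+0$, and the claim follows from the cone property. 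You instead handle general $q,q_1,\dots,q_m$ in one step, which forces you to verify the combinatorial inequality $|P\cap[q]|\le\sum_{k=1}^m|Q^{(k)}\cap[q_k]|$ for every $(P,Q^{(1)},\dots,Q^{(m)})\in T_s^r(m+1)$. Your observation that this inequality is itself a Horn inequality for a system of mutually orthogonal projections is precisely the device the paper uses in proving Proposition~\ref{Propo:facts-about-Tnr}(3), so it is very much in the paper's spirit. Both routes are sound; yours trades the induction on $q$ for the auxiliary projection estimate, giving a somewhat more self-contained verification of the Horn inequalities for the shifted vectors.
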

\begin{proof}
By induction, it suffices to consider the case $q=1$. For simplicity,
assume that $q_{1}=1$ and $q_{k}=0$ for $k>1$. Let us set $\alpha=\pi(I),\alpha'=\pi(I')$,
$\beta_{k}=\pi(J^{(k)})$, and $\beta'_{k}=\pi(J'^{(k)})$ for $k=1,2,\dots r$.
By Corollary \ref{cor:Horn-indices-are-eigenvalues} (or by the definition
of $T_{r}^{N})$, the sequences $(\alpha,\beta^{(1)},\dots,\beta^{(k)})$
satisfy condition (1) of Theorem \ref{Theo:Horn-conj-m}, plus the
trace identity. Moreover, the differences $(\alpha'-\alpha,\beta'^{(1)}-\beta^{(1)},\dots,\beta'^{(k)}-\beta^{(k)})$
also satisfy condition (1) of Theorem \ref{Theo:Horn-conj-m} and
the trace identity. Indeed, $\alpha'-\alpha=\beta'^{(1)}-\beta^{(1)}=(M,0,\dots,0)$,
while $\beta'^{(k)}-\beta^{(k)}=0$ for $k>1$. We deduce that $(\alpha',\beta'^{(1)},\dots,\beta'^{(k)})$
satisfy condition (1) of Theorem \ref{Theo:Horn-conj-m} as well.
The conclusion follows from Corollary \ref{cor:Horn-indices-are-eigenvalues}. 
\end{proof}
\begin{prop}
\label{Propo:facts-about-Tnr}Fix integers $x<r<N$ and $y<N-r$.
\begin{enumerate}
\item If $(I,J^{(1)},\dots J^{(m)})\in\overline{T}_{r}^{N}(m+1)$ and $(I',J'^{(1)},\dots,J'^{(m)})\in\overline{T}_{x}^{r}(m+1)$
then $(I\circ I',J^{(1)}\circ J'^{(1)},\dots J^{(m)}\circ J'^{(m)})\in\overline{T}_{x}^{N}(m+1)$. 
\item If $(I,J^{(1)},\dots J^{(m)})\in\overline{T}_{r}^{N}(m+1)$ and $(I',J'^{(1)},\dots,J'^{(m)})\in\overline{T}_{y}^{N-r}(m+1)$
then $(I'',J''^{(1)},\dots,J''^{(m)})\in\overline{T}_{r+y}^{N}(m+1)$,
where $I''=I\cup(I^{c}\circ I')$ and $J''^{(k)}=J^{(k)}\cup(J^{(k)c}\circ J''^{(k)})$
for $k=1,2,\dots m$. 
\item If $(I,J^{(1)},\dots J^{(m)})\in\overline{T}_{r}^{(m+1)N}(m+1)$ then\[
|I\cap[N]|+\sum|J^{(k)}\setminus[mN]|\le r.\]

\end{enumerate}
\end{prop}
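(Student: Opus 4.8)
The plan is to handle parts (1) and (2) via the matrix realization of Corollary~\ref{cor:Horn-indices-are-eigenvalues}, and to prove part (3) using the cohomological description of $\overline{T}$. Two elementary ingredients are used throughout. First, the identity $\pi(I\circ I')_{t}=\pi(I)_{I'_{\text{sym}}(t)}+\pi(I')_{t}$ (and the analogous expression of $\pi(I'')$ through $\pi(I)$ and $\pi(I')$ in part (2)), which is immediate from the definitions. Second, the fact recalled in the text that for decreasing sequences the Horn inequalities attached to the sets $T$ already imply those attached to $\overline{T}$; applying this to a configuration $A\ge\sum_{k}B^{(k)}$ \emph{and} to the configuration $-A\le\sum_{k}(-B^{(k)})$, we get that the eigenvalues of any such configuration satisfy every Horn inequality of Theorem~\ref{Theo:Horn-conj-m}, and every Horn inequality obtained from it by passing to the negatives, for \emph{every} $\overline{T}$-tuple as index set.

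For (1) I would induct on the ambient size $N$. Given $(I,J^{(1)},\dots,J^{(m)})\in\overline{T}_{r}^{N}(m+1)$ and $(I',J'^{(1)},\dots,J'^{(m)})\in\overline{T}_{x}^{r}(m+1)$, Corollary~\ref{cor:Horn-indices-are-eigenvalues} furnishes Hermitian matrices $A\ge\sum_{k}B^{(k)}$ with spectra $\pi(I),\pi(J^{(k)})$. To see $(I\circ I',J^{(1)}\circ J'^{(1)},\dots)\in\overline{T}_{x}^{N}(m+1)$ one must verify the inequality version of (\ref{equation-horn-m-tuples}) for this tuple together with (\ref{equ-m-horn-ineq}) for each $s<x$ and each $(\hat I,\hat J^{(1)},\dots)\in T_{s}^{x}(m+1)$. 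Writing $(I\circ I')\circ\hat I=I\circ(I'\circ\hat I)$ and splitting the $\ell$-th summand $I(j)-\ell$ with $j=(I'\circ\hat I)(\ell)$ as $(I(j)-j)+(j-\ell)$, each such inequality becomes the sum of a Horn inequality for $-A\le\sum_{k}(-B^{(k)})$ and the inequality version of (\ref{equation-horn-m-tuples}), both associated with the single tuple $(I'\circ\hat I,J'^{(1)}\circ\hat J^{(1)},\dots)$; both hold once one knows this tuple lies in $\overline{T}_{s}^{r}(m+1)$, which is the inductive hypothesis applied to $(I',\dots)\in\overline{T}_{x}^{r}(m+1)$ and $(\hat I,\dots)\in T_{s}^{x}(m+1)\subset\overline{T}_{s}^{x}(m+1)$ (the ambient size here is $r<N$). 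The inequality version of (\ref{equation-horn-m-tuples}) for $(I\circ I',\dots)$ is the same argument with $\hat I$ replaced by the full set $[x]$. Part (2) is handled in the same spirit, again by induction, reducing to the defining inequalities for the merged tuple with $I''=I\cup(I^{c}\circ I')$; alternatively one can argue geometrically, realizing an $(r+y)$-plane witnessing $I''$ as a direct sum of an $r$-plane witnessing $I$ and a $y$-plane witnessing $I'$ relative to the filtrations induced on a complement.

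For (3), a tuple $(I,J^{(1)},\dots,J^{(m)})\in\overline{T}_{r}^{(m+1)N}(m+1)$ gives a nonzero product of Schubert classes in $H^{*}\big(G((m+1)N,r)\big)$, so for flags $F^{0}_{\bullet},F^{1}_{\bullet},\dots,F^{m}_{\bullet}$ in general position the corresponding Schubert varieties have nonempty intersection; let $V$ be an $r$-dimensional point of it. From the Schubert condition attached to $I$, taking the largest index $j$ with $I(j)\le N$, we get $\dim(V\cap F^{0}_{N})\ge\dim(V\cap F^{0}_{I(j)})\ge j=|I\cap[N]|$. The Schubert condition attached to $J^{(k)}$ involves $J^{(k)}_{\text{sym}}$, whose elements $\le N$ are exactly the numbers $(m+1)N+1-\ell$ with $\ell\in J^{(k)}$ and $\ell>mN$; there are $|J^{(k)}\setminus[mN]|$ of them, so the same estimate gives $\dim(V\cap F^{k}_{N})\ge|J^{(k)}\setminus[mN]|$. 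Finally $F^{0}_{N},F^{1}_{N},\dots,F^{m}_{N}$ are $m+1$ subspaces of dimension $N$ in general position inside a space of dimension $(m+1)N$, hence their sum is everything and they are independent; consequently the subspaces $V\cap F^{i}_{N}$ of $V$ are independent, and $\sum_{i=0}^{m}\dim(V\cap F^{i}_{N})\le\dim V=r$. This is precisely the asserted inequality $|I\cap[N]|+\sum_{k=1}^{m}|J^{(k)}\setminus[mN]|\le r$.

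The main obstacle is in parts (1) and (2): one has to set up the induction correctly and check that the decomposition of each inequality (\ref{equ-m-horn-ineq}) into a ``reverse'' Horn inequality plus a $\overline{T}$-defining inequality is exact, which in part (2) is complicated by the less transparent description of $\pi(I'')$ and of the position of $I''$ relative to the flag. Once this is in place, part (3) is the short dimension count above; the only point to watch there is to keep the $(\cdot)_{\text{sym}}$ conventions consistent so that the two lower bounds come out as $|I\cap[N]|$ and $|J^{(k)}\setminus[mN]|$ rather than as their complements.
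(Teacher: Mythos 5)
Your part (3) is a genuinely different proof from the paper's, and it is worth comparing the two. The paper proves (3) by plugging a concrete operator configuration into the Horn inequality already available from Theorem~\ref{Theo:Horn-conj-m}: take mutually orthogonal rank-$N$ projections $P^{(1)},\dots,P^{(m)}$ in $\mathbb{C}^{(m+1)N}$, set $P=\sum_k P^{(k)}$, and apply the Horn inequality for the tuple $(I,J^{(1)},\dots,J^{(m)})$ to $\Lambda(-P)$ and $\Lambda(-P^{(k)})$; the computation $\sum_{i\in I}\Lambda(-P)_i=-|I\setminus[N]|$ and $\sum_{j\in J^{(k)}}\Lambda(-P^{(k)})_j=-|J^{(k)}\setminus[mN]|$ gives the inequality on the spot. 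Your Schubert-calculus argument is the geometric shadow of exactly this: the $m+1$ generic flags play the role of the orthogonal projections, and the fact that $F^0_N\oplus\cdots\oplus F^m_N=\mathbb{C}^{(m+1)N}$ replaces the orthogonality of the ranges, yielding the same dimension count $\sum_i\dim(V\cap F^i_N)\le r$. What the paper's route buys is robustness against conventions: your route depends on which Schubert variety realizes $\omega_I$. If one reads the paper's definition $\langle\eta_I,\omega_J\rangle=\delta_{IJ}$ at face value, $\omega_I$ is Poincar\'e-dual to $\eta_{I_{\mathrm{sym}}}$, and the Schubert conditions that come out are with respect to $I_{\mathrm{sym}}$ and $J^{(k)}$ rather than (as you wrote) $I$ and $J^{(k)}_{\mathrm{sym}}$; the dimension count would then give $|I\setminus[mN]|+\sum_k|J^{(k)}\cap[N]|\le r$, which is a different inequality and in fact false (test $(\{5,6\},\{5,6\},\{1,2\})\in\overline{T}_2^6(3)$). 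Your identification of the Schubert conditions is the one that makes the cohomological description of $\overline{T}$ consistent with the recursive/matrix definition and gives the right answer, so the argument does go through --- but you should say explicitly which identification of $\omega_I$ with a Schubert variety you are using and verify it against Corollary~\ref{cor:Horn-indices-are-eigenvalues}, rather than relying on the reader to resolve the $(\cdot)_{\mathrm{sym}}$ bookkeeping. The paper's projection argument has no such exposure and is shorter.

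For parts (1) and (2), be aware that the paper does not reprove them: it cites Buch's Lemma~1. Your sketch, via the matrix realization in Corollary~\ref{cor:Horn-indices-are-eigenvalues} and an inner induction on the ambient size, is in the same spirit as the known arguments and looks plausible, but as written it is a sketch rather than a proof. You yourself flag the sticking point --- making the decomposition of the defining inequality (\ref{equ-m-horn-ineq}) into a ``reverse'' Horn inequality plus a $\overline{T}$-defining inequality exact, and carrying this out in part (2) where $\pi(I'')$ and the placement of $I''$ relative to the flag are less transparent. If you intend the argument to be self-contained you must actually write those computations; otherwise citing Buch, as the paper does, is the economical route.
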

\begin{proof}
Properties (1) and (2) are known; see, for instance Buch \cite[Lemma 1]{buch}.
To prove (3), consider mutually orthogonal projections $P^{(1)},P^{(2)},\dots,P^{(m)}$
of size $(m+1)N\times(m+1)N$ and of rank $N$, and set $P=P^{(1)}+\cdots+P^{(m)}$.
Observe that \[
\alpha=\Lambda(-P)=(\underbrace{0,\dots,0}_{N},\underbrace{-1,\dots,-1}_{mN}),\quad\beta^{(k)}=\lambda(-P^{(k)})=(\underbrace{0,\dots,0}_{mN},\underbrace{-1,\dots,-1}_{N})\]
and therefore\[
\sum_{i\in I}\alpha_{i}=-|I\setminus[N]|,\quad\sum_{j\in J^{(k)}}\beta_{j}^{(k)}=-|J^{(k)}\setminus[mN]|.\]
Thus the Horn inequality applied to the eigenvalues of $-P,-P^{(1)},\dots,-P^{(m)}$
yields\[
-|I\setminus[N]|\le-\sum_{k=1}^{m}|J^{(k)}\setminus[mN]|.\]
The desired inequality is obtained since $|I\setminus[N]|=r-|I\cap[N]|$.
\end{proof}

\section{An Interpolation Result}

Given three vectors $\alpha,\alpha'\alpha''\in\mathbb{R}^{N}$, we
will say that $\alpha$ is \emph{between $\alpha'$} and $\alpha''$
if\[
\min\{\alpha_{i}',\alpha_{i}''\}\le\alpha_{i}\le\max\{\alpha_{i}',\alpha_{i}''\},\quad i=1,2,\dots,N.\]
We denote by $\alpha^{*}$ the decreasing rearrangement of a vector
$\alpha\in\mathbb{R}^{N}$.

\begin{lem}
Let $\alpha,\alpha',\alpha''\in\mathbb{R}^{N}$ be such that $\alpha'$
and $\alpha''$ are decreasing, and $\alpha$ is between $\alpha'$
and $\alpha''$. Then $\alpha^{*}$ is also between $\alpha'$ and
$\alpha''$.
\end{lem}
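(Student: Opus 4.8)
The plan is to reduce the statement to a purely combinatorial claim about how the decreasing rearrangement interacts with coordinatewise bounds, and then establish that claim by an exchange/transposition argument. First I would fix notation: write $M_i=\max\{\alpha_i',\alpha_i''\}$ and $m_i=\min\{\alpha_i',\alpha_i''\}$, so the hypothesis is $m_i\le\alpha_i\le M_i$ for all $i$, and since $\alpha'$ and $\alpha''$ are both decreasing, both $M$ and $m$ are decreasing as well (the coordinatewise max and min of two decreasing sequences are decreasing). Thus it suffices to prove: if $m\le\alpha\le M$ coordinatewise with $m,M$ decreasing, then $m\le\alpha^*\le M$ coordinatewise. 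By symmetry (replacing everything by its negative swaps the two inequalities and swaps $m$ with $M$), it is enough to prove the single inequality $\alpha^*_i\le M_i$ for all $i$.

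The key step is the inequality $\alpha^*_i\le M_i$. I would prove this by the standard characterization of the decreasing rearrangement via its $i$-th largest entry: $\alpha^*_i=\min_{|S|=i}\max_{j\in S}\alpha_j$ where $S$ ranges over $i$-element subsets of $[N]$, or equivalently $\alpha^*_i$ is the largest value $t$ such that at least $i$ coordinates of $\alpha$ are $\ge t$. Take $S=[i]=\{1,2,\dots,i\}$; then $\alpha^*_i\le\max_{j\in[i]}\alpha_j\le\max_{j\in[i]}M_j=M_i$, the last equality because $M$ is decreasing. This gives $\alpha^*_i\le M_i$ immediately, and the dual argument with $m$ (or the negation trick) gives $m_i\le\alpha^*_i$.

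Alternatively, if one prefers to avoid invoking the min-max formula for $\alpha^*$, I would argue by a bubble-sort style induction: $\alpha^*$ is obtained from $\alpha$ by a finite sequence of transpositions of adjacent out-of-order entries, so it suffices to check that a single such transposition preserves the property of lying between $m$ and $M$. If $\alpha_k<\alpha_{k+1}$ and we swap them to get $\tilde\alpha$, then $\tilde\alpha_k=\alpha_{k+1}$ and $\tilde\alpha_{k+1}=\alpha_k$; we must check $m_k\le\alpha_{k+1}\le M_k$ and $m_{k+1}\le\alpha_k\le M_{k+1}$. For the upper bound at position $k$: $\alpha_{k+1}\le M_{k+1}\le M_k$ since $M$ is decreasing. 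For the lower bound at position $k+1$: $\alpha_k\ge m_k\ge m_{k+1}$. The remaining two bounds, $m_k\le\alpha_{k+1}$ and $\alpha_k\le M_{k+1}$, follow from $m_k\le m_{k+1}\le\alpha_{k+1}$ is false in general — so I need the assumption $\alpha_k<\alpha_{k+1}$ here: indeed $\alpha_{k+1}>\alpha_k\ge m_k$ and $\alpha_k<\alpha_{k+1}\le M_{k+1}$. So each adjacent transposition that moves toward sortedness preserves the sandwich condition, and after finitely many such moves we reach $\alpha^*$.

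I expect the only real subtlety, and hence the point to state carefully, is the observation that the coordinatewise maximum and minimum of two decreasing sequences are again decreasing; everything else is either a one-line application of the min-max description of $\alpha^*$ or a routine verification in the transposition argument. The transposition proof is perhaps cleaner to write since it needs no external facts about rearrangements, only that any permutation sorting a sequence factors through adjacent transpositions each of which reduces the number of inversions.
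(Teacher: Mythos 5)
Your argument is correct and follows essentially the same route as the paper: reduce via the observation that $m_i=\min\{\alpha_i',\alpha_i''\}$ and $M_i=\max\{\alpha_i',\alpha_i''\}$ are again decreasing, so that one may assume $m\le\alpha\le M$, and then use the fact that decreasing rearrangement respects the coordinatewise order together with $m^*=m$, $M^*=M$. The only difference is that the paper simply invokes the monotonicity $\alpha\le\beta\Rightarrow\alpha^*\le\beta^*$, whereas you supply a self-contained proof of that step (via the min--max formula or the adjacent-transposition argument); both of your derivations of it are correct.
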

\begin{proof}
Replacing $\alpha'_{i}$ by $\min\{\alpha_{i}',\alpha_{i}''\}$ and
$\alpha_{i}''$ by $\max\{\alpha_{i}',\alpha_{i}''\}$, we may assume
that $\alpha'\le\alpha\le\alpha''$. We have then $\alpha'^{*}\le\alpha^{*}\le\alpha''^{*}$,
and clearly $\alpha'^{*}=\alpha',\alpha''^{*}=\alpha''$.
\end{proof}
We are now ready to interpolate between $N$-tuples which satisfy
the Horn inequalities and the reverse Horn inequalities. If $\alpha\in\mathbb{R}^{N}$,
and $I=\{ i_{1}<i_{2}<\cdots<i_{r}\}\subset[N]$, we will use the
notation $\alpha\circ I$ for the vector $(\alpha_{i_{1}},\alpha_{i_{2}},\dots,\alpha_{i_{r}})\in\mathbb{R}^{r}$.

\begin{prop}
\label{prop:interpolation-finiteN}Fix $m,N\in\mathbb{N}$, and consider
decreasing vectors $\alpha',\alpha'',\beta'^{(k)},\beta''^{(k)}\in\mathbb{R}^{N}$
for $k=1,2,\dots m$. Assume that for every $r\le N$ and every $(I,J^{(1)},\dots,J^{(m)})\in T_{r}^{N}(m+1)$
the inequalities\[
\sum_{i\in I}\alpha'_{i}\le\sum_{k=1}^{m}\sum_{j\in I^{(k)}}\beta{}_{j}^{\prime(k)}\]
and \[
\sum_{i\notin I}\alpha''_{i}\ge\sum_{k=1}^{m}\sum_{j\notin J^{(k)}}\beta_{j}^{\prime\prime(k)}\]
are satisfied. Then there exist Hermitian $N\times N$-matrices $A,B^{(1)},\dots,B^{(m)}$
such that $A=\sum_{k=1}^{m}B^{(k)}$, $\Lambda(A)$ is between $\alpha'$
and $\alpha''$, and $\Lambda(B^{(k)})$ is between $\beta'^{(k)}$
and $\beta''^{(k)}$ for $k=1,2,\dots,m$. If all the entries of $\alpha',\alpha'',\beta'^{(k)},\beta''^{(k)}$
are integers, then $A$ and $B^{(k)}$ can be chosen so that $\Lambda(A)$
and $\Lambda(B^{(k)})$ have integer entries as well. 
\end{prop}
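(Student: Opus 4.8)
The plan is to reduce the reverse inequalities to ordinary Horn inequalities, to realize both families of hypotheses by matrix comparisons, and then to interpolate between those comparisons. First, set $a=(-\alpha''_N,-\alpha''_{N-1},\dots,-\alpha''_1)$ and $b^{(k)}=(-\beta^{\prime\prime(k)}_N,\dots,-\beta^{\prime\prime(k)}_1)$ for $k=1,\dots,m$; these are decreasing, and the substitution $i\mapsto N+1-i$ identifies the reverse Horn hypothesis $\sum_{i\notin I}\alpha''_i\ge\sum_{k=1}^m\sum_{j\notin J^{(k)}}\beta^{\prime\prime(k)}_j$, valid for all $(I,J^{(1)},\dots,J^{(m)})\in T_r^N(m+1)$, with condition (3) of Theorem \ref{Theo:Horn-conj-m} for the tuple $(a,b^{(1)},\dots,b^{(m)})$. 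Hence Theorem \ref{Theo:Horn-conj-m} furnishes Hermitian matrices $\mathcal A\le\sum_{k=1}^m\mathcal B^{(k)}$ with $\Lambda(\mathcal A)=a$ and $\Lambda(\mathcal B^{(k)})=b^{(k)}$; on negating, $A''=-\mathcal A$ and $B^{\prime\prime(k)}=-\mathcal B^{(k)}$ satisfy $A''\ge\sum_{k=1}^m B^{\prime\prime(k)}$ with $\Lambda(A'')=\alpha''$ and $\Lambda(B^{\prime\prime(k)})=\beta^{\prime\prime(k)}$. The hypothesis on the primed vectors is condition (2) of Theorem \ref{Theo:Horn-conj-m} outright, so it gives $A'\le\sum_{k=1}^m B^{\prime(k)}$ with $\Lambda(A')=\alpha'$ and $\Lambda(B^{\prime(k)})=\beta^{\prime(k)}$. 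Everything now reduces to interpolating between a ``$\le$'' configuration and a ``$\ge$'' configuration while keeping eigenvalues inside the prescribed intervals.

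Next I would recast the problem convex-geometrically. By Theorem \ref{Theo:Horn-conj-m} together with the trace identity (see the remark following it), the decreasing tuples $(\Lambda(A),\Lambda(B^{(1)}),\dots,\Lambda(B^{(m)}))$ for which $A=\sum_{k=1}^m B^{(k)}$ form exactly the polytope $P$ of decreasing tuples satisfying all Horn inequalities together with the trace identity $\sum_i\alpha_i=\sum_{k=1}^m\sum_j\beta^{(k)}_j$. The two configurations just produced say precisely that $(\alpha',\beta^{\prime(1)},\dots,\beta^{\prime(m)})$ obeys every Horn inequality and $\sum_i\alpha'_i\le\sum_{k=1}^m\sum_j\beta^{\prime(k)}_j$, whereas $(\alpha'',\beta^{\prime\prime(1)},\dots,\beta^{\prime\prime(m)})$ obeys every reverse Horn inequality and $\sum_i\alpha''_i\ge\sum_{k=1}^m\sum_j\beta^{\prime\prime(k)}_j$. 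Writing $\mathcal B$ for the product box whose $i$th $\alpha$-factor is $[\min\{\alpha'_i,\alpha''_i\},\max\{\alpha'_i,\alpha''_i\}]$, with analogous factors in the $\beta^{(k)}$-coordinates, the claim to be proved is $P\cap\mathcal B\ne\varnothing$. Both given points lie in $\mathcal B$ and in the convex cone of decreasing tuples, and the trace functional $\sum_i\alpha_i-\sum_{k=1}^m\sum_j\beta^{(k)}_j$, being affine, takes opposite signs at these two points, so it vanishes somewhere on the segment joining them; the entire difficulty lies in keeping all the Horn inequalities valid simultaneously.

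The core step is therefore $P\cap\mathcal B\ne\varnothing$, and I would try to prove it by induction on $N$. For $N=1$ it is an elementary interval computation: as the $1\times1$ matrices $B^{(k)}$ range over $[\min\{\beta^{\prime(k)}_1,\beta^{\prime\prime(k)}_1\},\max\{\beta^{\prime(k)}_1,\beta^{\prime\prime(k)}_1\}]$, their sum ranges over an interval which meets $[\min\{\alpha'_1,\alpha''_1\},\max\{\alpha'_1,\alpha''_1\}]$, because the single Horn inequality gives $\alpha'_1\le\sum_k\beta^{\prime(k)}_1$ and the single reverse Horn inequality gives $\sum_k\beta^{\prime\prime(k)}_1\le\alpha''_1$. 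For the inductive step one passes to suitable blocks of $N-1$ indices, uses $T_r^{N-1}(m+1)\subset T_r^N(m+1)$ and the combinatorial facts about the sets $T$ and $\overline T$ assembled in Section 2 (Corollaries \ref{cor:Horn-indices-are-eigenvalues} and \ref{cor:I-subset-of-first-n}, Lemma \ref{lemma:inserting-gaps}, Proposition \ref{Propo:facts-about-Tnr}) to verify that the restricted data still satisfy the hypotheses, invokes the inductive hypothesis to obtain an $(N-1)$-dimensional configuration, and then appends one eigenvalue to $A$ and to each $B^{(k)}$ so as to preserve $A=\sum_k B^{(k)}$, with the new eigenvalues confined to an interval dictated by the Horn inequalities that actually involve the index $N$. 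I expect this appending step to be the main obstacle: one must pin down precisely which Horn inequalities constrain the added eigenvalues, apply the splicing properties of Horn tuples (Proposition \ref{Propo:facts-about-Tnr}(1),(2) and Lemma \ref{lemma:inserting-gaps}) to see that the admissible range is again a nonempty interval, and keep the new eigenvalues compatible with the decreasing order. (If the induction turns out to be awkward, the same conclusion may be sought through a continuity argument: deform the ``$\le$'' configuration into the ``$\ge$'' one through Hermitian tuples whose eigenvalues traverse the boxes monotonically, and show that the defect $\sum_k B^{(k)}-A$ must pass through $0$ with no Horn inequality violated; this demands comparable care.)

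Finally, the integral refinement comes for free from this scheme: if the four input vectors are integral then all the boxes are integral, the case $N=1$ plainly produces integers, and in the inductive step the appended eigenvalues are chosen from integral intervals, so the resulting tuple lies in $P$ with integral entries and is realized by Hermitian matrices with those integral eigenvalues; alternatively one may invoke the integrality of the hive polytope for the additive Horn problem.
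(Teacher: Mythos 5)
Your setup is sound: you correctly recognize that the two families of hypotheses are exactly conditions (2) and (3) of Theorem \ref{Theo:Horn-conj-m} for the primed and double-primed tuples respectively, and that what must be shown is that the polytope $P$ (Horn inequalities plus trace identity) meets the coordinate box $\mathcal{B}$. Your $N=1$ base case is also correct. However, the inductive step is where the proof actually lives, and you leave it as a sketch that does not work in the form you propose.

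The concrete gap is this: you suggest inducting from $N-1$ to $N$ by ``appending one eigenvalue'' to a solution of a restricted problem on $N-1$ indices, and you admit you do not know how to control the appended eigenvalue against the Horn inequalities. That induction would not succeed as stated, because the Horn inequalities for size $N$ do not factor across a split $\{1,\dots,N\}=\{1,\dots,N-1\}\cup\{N\}$; the tuples $(I,J^{(1)},\dots,J^{(m)})\in T_r^N(m+1)$ interleave arbitrarily. The paper instead uses a different and sharper mechanism. It parametrizes the segment $\alpha(t)=t\alpha'+(1-t)\alpha''$, $\beta^{(k)}(t)=t\beta'^{(k)}+(1-t)\beta''^{(k)}$, and lets $\tau$ be the smallest $t\in[0,1]$ for which all Horn inequalities hold for $(\alpha(t),\beta^{(k)}(t))$. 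If $\tau=0$ one gets the trace identity from the reverse-Horn hypothesis and Theorem \ref{Theo:Horn-conj-m} finishes it; otherwise minimality forces some Horn inequality to be an \emph{equality} at $\tau$, say indexed by $(I,J^{(1)},\dots,J^{(m)})\in T_r^N(m+1)$. That tight equality is precisely what permits the decomposition: Proposition \ref{Propo:facts-about-Tnr}(1) shows that $(\alpha(\tau)\circ I,\beta^{(k)}(\tau)\circ J^{(k)})$ satisfy all Horn inequalities plus the trace identity, giving an $r\times r$ block $A_0=\sum_k B_0^{(k)}$; and Proposition \ref{Propo:facts-about-Tnr}(1),(2) show (after subtracting the tight equality) that the restrictions of $\alpha(\tau),\alpha'',\beta^{(k)}(\tau),\beta''^{(k)}$ to the complementary $N-r$ indices satisfy the full hypothesis of the proposition in dimension $N-r$. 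The induction therefore runs on $N-r$ for a $\tau$-dependent $r$, not on $N-1$, and the constructed matrix is $A=A_0\oplus A_1$; the preliminary lemma about decreasing rearrangements (``between'' is preserved by $*$) is then used to check that $\Lambda(A)$ lies in the box. You gesture at this in your parenthetical ``continuity argument,'' but the essential ingredients --- the minimal $\tau$, the tight inequality, and the equality-driven block split using Proposition \ref{Propo:facts-about-Tnr} --- are not there. Finally, your claim that integrality ``comes for free'' is not justified under your scheme; the paper handles it separately by replacing the continuous path $\alpha(t)$ with a discrete path of integer vectors changing one coordinate by $1$ per step, again stopping at the first tight Horn inequality.
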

\begin{proof}
We argue by induction on $N$, taking $N=0$ as our starting point
for which there is nothing to prove. Assume therefore that the proposition
has been proved for vectors in $\mathbb{R}^{M}$ with $M<N$. Let
us set $\alpha(t)=t\alpha'+(1-t)\alpha''$ and $\beta^{(k)}(t)=t\beta'^{(k)}+(1-t)\beta''^{(k)}$
for $t\in[0,1]$. Define $S\subset[0,1]$ to consist of those values
$t$ for which all the Horn inequalities\[
\sum_{i\in I}\alpha(t)_{i}\le\sum_{k=1}^{m}\sum_{j\in J^{(k)}}\beta^{(k)}(t)_{j}\]
 are satisfied. The set $S$ is clearly compact, $1\in S$, and therefore
it contains a smallest element $\tau$. We claim that there exist
$r\in[N]$ and $(I,J^{(1)},\dots,J^{(k)})\in T_{r}^{N}(m+1)$ such
that \begin{equation}
\sum_{i\in I}\alpha(\tau)=\sum_{k=1}^{m}\sum_{j\in J^{(k)}}\beta^{(k)}(\tau)_{j}.\label{eq-h-eq}\end{equation}
Indeed, if all of these inequalities were strict, $\tau$ would be
either zero or an interior point of $A$. The case $\tau=0$ is easily
dealt with because in this case $\alpha(\tau)=\alpha''$, and the
hypothesis implies\[
\sum_{i=1}^{N}\alpha(\tau)_{i}\ge\sum_{k=1}^{m}\sum_{j=1}^{N}\beta(\tau)_{j}^{(k)}.\]
Thus $\alpha(\tau),\beta^{(k)}(\tau)$ satisfy all the Horn inequalities,
as well as the trace identity, and the proposition follows from Theorem
\ref{Theo:Horn-conj-m} with $\alpha=\alpha(\tau)$ and $\beta^{(k)}=\beta^{(k)}(\tau)$.
Assume then that $\tau>0$. Proposition \ref{Propo:facts-about-Tnr}(1)
implies then the existence of $r\times r$-matrices $A_{0},B_{0}^{(1)},\dots,B_{0}^{(m)}$
such that $A_{0}=\sum_{k=1}^{m}B_{0}^{(k)}$, $\Lambda(A_{0})=\alpha(\tau)\circ I$,
and $\Lambda(B_{0}^{(k)})=\beta^{(k)}(\tau)\circ J_{0}^{(k)}$. We
claim that the vectors $\alpha'_{1}=\alpha(\tau)\circ I_{N-r}^{c},\beta{}_{1}^{\prime(k)}=\beta^{(k)}(\tau)\circ J_{N-r}^{(k)c}$,
$\alpha''_{1}=\alpha''\circ I_{N-r}^{c}$, $\beta_{1}^{\prime\prime(k)}=\beta''\circ J_{N-r}^{(k)c}$
satsify the hypothesis of the proposition, with $N-r$ in place of
$N$. Indeed, the required Horn inequalities for $\alpha'_{1}$ and
$\beta_{1}^{\prime(k)}$ follow from Proposition \ref{Propo:facts-about-Tnr}(2)
(the equality (\ref{eq-h-eq}) must be subtracted from the corresponding
Horn inequality for $\alpha(\tau)$ and $\beta^{(k)}(\tau)$). The
required Horn inequalities for $\alpha''_{1}$ and $\beta_{1}^{\prime\prime(k)}$
follow directly from Proposition \ref{Propo:facts-about-Tnr}(1).
The inductive hypothesis implies the existence of $(N-r)\times(N-r)$-matrices
$A_{1},B_{1}^{(k)}$ such that $A_{1}=\sum_{k=1}^{m}B_{1}^{(k)}$,
$\Lambda(A_{1})$ is between $\alpha'_{1}$ and $\alpha''_{1}$, and
$\Lambda(B_{1}^{(k)})$ is between $\beta_{1}^{\prime(k)}$ and $\beta_{1}^{\prime\prime(k)}$.
The conclusion of the proposition is satisfied by the matrices $A=A_{0}\oplus A_{1}$
and $B^{(k)}=B_{0}^{(k)}\oplus B_{1}^{(k)}$, $k=1,2,\dots,m$. The
verification of this fact is an easy application of the preceding
lemma because $\Lambda(A)=(\Lambda(A_{0}),\Lambda(A_{1}))^{*}$.

Assume now that the entries of $\alpha',\alpha'',\beta'^{(k)},\beta''^{(k)}$
are integers for $k=1,2,\dots,m$. Unfortunately, $\alpha(\tau)$
does not generally have integer entries. The argument will proceed
however in a similar manner. We construct for $n=0,1,\dots$ decreasing
integer vectors $\alpha(n),\beta^{(k)}(n)$ such that 
\begin{enumerate}
\item $\alpha(0)=\alpha',\beta^{(k)}(0)=\beta'^{(k)},$
\item $\alpha(n+1)$ is between $\alpha(n)$ and $\alpha''$, and $\beta^{(k)}(n+1)$
is between $\beta^{(k)}(n)$ and $\beta''^{(k)}$ for $k=1,2,\dots,m,$
\item $\sum_{i\in I}\alpha(n)_{i}\le\sum_{k=1}^{m}\sum_{j\in J^{(k)}}\beta^{(k)}(n)_{j}$
for every $(I,J^{(1)},\dots,J^{(m)})\in T_{r}^{N}(m+1)$, $r\le N$,
and
\item $\sum_{i=1}^{N}|\alpha(n+1)_{i}-\alpha(n)_{i}|+\sum_{k=1}^{m}\sum_{j=1}^{N}|\beta^{(k)}(n+1)_{j}-\beta^{(k)}(n)_{j}|=1.$
\end{enumerate}
In other words, only one entry of one of the vectors is modified in
passing from $n$ to $n+1$. The construction proceeds by induction
until either $\alpha(n)=\alpha''$ and $\beta^{(k)}(n)=\beta''$ for
all $n$, or one of the inequalities in (3) is an equality. The remainder
of the argument proceeds as before.

\end{proof}
In the following statement, the inequality $I'\le I$ is simply an
inequality between functions, i.e., $I'(\ell)\le I(\ell)$ for $\ell=1,2,\dots r$.

\begin{cor}
\label{corolar:from-Tbar-toT}Given $r<N$ and $(I,J^{(1)},\dots,J^{(m)})\in\overline{T}_{r}^{N}(m+1)$,
there exists $(I',J'^{(1)},\dots,J'^{(m)})\in T_{r}^{N}(m+1)$ such
that $I'\le I$ and $J'^{(k)}\ge J^{(k)}$ for $k=1,2,\dots,m$.
\end{cor}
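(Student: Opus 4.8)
The plan is to reduce the statement, by means of Corollary \ref{cor:Horn-indices-are-eigenvalues}, to the integral form of Proposition \ref{prop:interpolation-finiteN}, applied in dimension $r$ rather than $N$. Write $\mu=\pi(I)$ and $\nu^{(k)}=\pi(J^{(k)})$ for the associated partitions; these are decreasing nonnegative integer vectors in $\mathbb{Z}^{r}$, and $\mu_{1}=I(r)-r\le N-r$. The first step is to observe that the hypothesis $(I,J^{(1)},\dots,J^{(m)})\in\overline{T}_{r}^{N}(m+1)$ is equivalent to the family of inequalities
\[
\sum_{i\in[r]\setminus\tilde{I}}\mu_{i}\ge\sum_{k=1}^{m}\sum_{j\in[r]\setminus\tilde{J}^{(k)}}\nu^{(k)}_{j},
\]
one for each $s\le r$ and each $(\tilde{I},\tilde{J}^{(1)},\dots,\tilde{J}^{(m)})\in T_{s}^{r}(m+1)$. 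Indeed, by Corollary \ref{cor:Horn-indices-are-eigenvalues} the hypothesis is equivalent to the existence of $r\times r$ Hermitian matrices $A_{0}\ge\sum_{k}B_{0}^{(k)}$ with $\Lambda(A_{0})=\mu$ and $\Lambda(B_{0}^{(k)})=\nu^{(k)}$; applying the implication $(1)\Rightarrow(3)$ of Theorem \ref{Theo:Horn-conj-m} to $-A_{0}\le\sum_{k}(-B_{0}^{(k)})$, and using both $\Lambda(-X)_{i}=-\Lambda(X)_{r+1-i}$ and the fact that $\tilde{I}\mapsto\tilde{I}_{\mathrm{sym}}$ interchanges $\tilde{I}$ with its complement in $[r]$, turns condition $(3)$ exactly into the displayed inequalities (the reverse implication being the same computation read backwards).

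Next I would apply Proposition \ref{prop:interpolation-finiteN}, with $N$ there replaced by $r$, to the decreasing integer vectors $\alpha'=(0,\dots,0)$, $\alpha''=\mu$, and $\beta'^{(k)}=\beta''^{(k)}=\nu^{(k)}$ in $\mathbb{Z}^{r}$. Its two families of hypotheses, to be checked over all $(\tilde{I},\tilde{J}^{(1)},\dots,\tilde{J}^{(m)})\in T_{s}^{r}(m+1)$ with $s\le r$, are the Horn inequalities $\sum_{i\in\tilde{I}}\alpha'_{i}\le\sum_{k}\sum_{j\in\tilde{J}^{(k)}}\beta'^{(k)}_{j}$, which hold trivially because the left-hand side is $0$ while the $\nu^{(k)}$ are nonnegative, and the reverse inequalities $\sum_{i\notin\tilde{I}}\alpha''_{i}\ge\sum_{k}\sum_{j\notin\tilde{J}^{(k)}}\beta''^{(k)}_{j}$, which are exactly the inequalities displayed above. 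The final assertion of the proposition, concerning integer entries, then yields $r\times r$ Hermitian matrices $A'=\sum_{k=1}^{m}B'^{(k)}$ for which $\Lambda(A')$ is an integer vector lying between $(0,\dots,0)$ and $\mu$ --- hence a partition $\lambda$ with $0\le\lambda\le\mu$ --- and $\Lambda(B'^{(k)})$ lies between $\nu^{(k)}$ and $\nu^{(k)}$, hence equals $\nu^{(k)}$.

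It remains to read the Horn tuple off these matrices. Let $I'$ be the unique $r$-element subset of $\mathbb{N}$ with $\pi(I')=\lambda$, that is, $I'(\ell)=\lambda_{r+1-\ell}+\ell$, and set $J'^{(k)}=J^{(k)}$ for $k=1,\dots,m$. Since $\lambda\le\mu=\pi(I)$ entrywise and $\mu_{1}\le N-r$, we get $I'(\ell)=\lambda_{r+1-\ell}+\ell\le\pi(I)_{r+1-\ell}+\ell=I(\ell)\le N$, so $I'\subset[N]$, $I'\le I$, and trivially $J'^{(k)}\ge J^{(k)}$. As $A'=\sum_{k}B'^{(k)}$ with $\Lambda(A')=\pi(I')$ and $\Lambda(B'^{(k)})=\pi(J'^{(k)})$, Corollary \ref{cor:Horn-indices-are-eigenvalues} gives $(I',J'^{(1)},\dots,J'^{(m)})\in T_{r}^{N}(m+1)$, which is the desired conclusion (the case $r=0$ being vacuous). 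I expect the only point requiring care to be the first step: recognizing that the defining inequalities of $\overline{T}_{r}^{N}(m+1)$, which in partition form are Horn inequalities associated with a relation $A_{0}\ge\sum_{k}B_{0}^{(k)}$, coincide --- after the substitution $X\mapsto-X$ and the symmetrization of index sets --- with the complemented-index (``reverse Horn'') inequalities that Proposition \ref{prop:interpolation-finiteN} takes as input. Everything after that is routine bookkeeping.
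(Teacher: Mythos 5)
Your proof is correct and takes essentially the same route as the paper: reduce to Proposition \ref{prop:interpolation-finiteN} in dimension $r$, with $\alpha'=(0,\dots,0)$ and $\alpha''=\pi(I)$, then read off $I'$ from the integer output via $\pi$. The one substantive difference is the choice of $\beta'^{(k)}$: the paper takes $\beta'^{(k)}=(N-r,\dots,N-r)$, so that $\Lambda(B^{(k)})$ is pinned between $\pi(J^{(k)})$ and $(N-r,\dots,N-r)$ and one gets $J'^{(k)}\ge J^{(k)}$ with $J'^{(k)}\subset[N]$; you take $\beta'^{(k)}=\beta''^{(k)}=\pi(J^{(k)})$, which collapses that interval and forces $J'^{(k)}=J^{(k)}$ exactly. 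That is a formally stronger conclusion, still implying the statement, and it works because $\pi(J^{(k)})\ge 0$ makes the direct Horn hypotheses of Proposition \ref{prop:interpolation-finiteN} trivially true just as $(N-r,\dots,N-r)$ does, while the reverse hypotheses are unchanged. You are also more explicit than the paper on why the reverse hypotheses hold, deriving them from Corollary \ref{cor:Horn-indices-are-eigenvalues} together with the equivalence of conditions (1) and (3) in Theorem \ref{Theo:Horn-conj-m} applied to $-A_0\le\sum_k(-B_0^{(k)})$; the paper asserts this without elaboration. Everything checks: $I'(\ell)=\lambda_{r+1-\ell}+\ell\le I(\ell)\le N$ keeps $I'\subset[N]$, and Corollary \ref{cor:Horn-indices-are-eigenvalues} in the ``$=$'' direction certifies membership in $T_r^N(m+1)$.
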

\begin{proof}
Let us define partitions of length $r$ as follows: $\alpha''=\pi(I),$
$\beta''^{(k)}=\pi(J^{(k)}),$ $\alpha'=(0,0,\dots,0)$, and $\beta'^{(k)}=(N-r,N-r,\dots,N-r)$.
We claim that these vectors satisfy the hypotheses of Proposition
\ref{prop:interpolation-finiteN}. Indeed, the inequalities for $\alpha'',\beta''^{(k)}$
follow from the fact that $(I,J^{(1)},\dots,J^{(m)})\in\overline{T}_{r}^{N}(m+1)$,
and the Horn inequalities for $\alpha',\beta'^{(k)}$ are obvious.
Proposition \ref{prop:interpolation-finiteN} provides partitions
$\alpha=\Lambda(A),\beta^{(k)}=\Lambda(B^{(k)})$ which satisfy the
Horn inequalities and the trace identity, and such that \[
\alpha'\le\alpha\le\alpha''\quad\text{and}\quad\beta''^{(k)}\le\beta\le\beta'^{(k)}\]
for $k=1,2,\dots m$. To conclude the proof, we define $I',J'^{(k)}\subset[N]$
such that $\pi(I')=\alpha$ and $\pi(J'^{(k)})=\beta^{(k)}$ for $k=1,2,\dots,m$.
\end{proof}

\section{Eigenvalues of Compact Selfadjoint Operators}

Let $A$ be a compact selfadjoint operator on a complex Hilbert space
$\mathfrak{H}$. One can represent $A$ as\[
Ah=\sum_{k}\mu_{k}\langle h,e_{k}\rangle e_{k},\quad h\in\mathfrak{H},\]
where $\{ e_{k}\}$ is an orthonormal system, and $(\mu_{k})$ is
a real sequence with limit zero. It may no longer be possible to rearrange
the eigenvalues $\mu_{k}$ in decreasing order. Instead, we can define
numbers $\lambda_{\pm n}$ for $n\in\mathbb{N}$ such that $\lambda_{n}$
is th $n$th largest positive term in $(\mu_{k})$, while $\lambda_{-n}$
is the $n$th smallest negative term in $(\mu_{k})$. Note that\[
\lambda_{1}\ge\lambda_{2}\ge\cdots\ge0\text{ and }\lambda_{-1}\le\lambda_{-2}\le\cdots\le0.\]
 If there are only $p<\infty$ positive terms in $(\mu_{k})$, we
set $\lambda_{n}=0$ for $n>p$, with a similar convention for the
negative terms. We will denote by $\Lambda_{0}(A)$ the sequence\[
\lambda_{1}\ge\lambda_{2}\ge\cdots\ge\lambda_{-2}\ge\lambda_{-1}.\]
Observe that $A$ cannot quite be reconstructed, up to unitary equivalence,
from $\Lambda_{0}(A)$. It may happen that $0$ is not an eigenvalue
of $A$, but it figures infinitely many times in $\Lambda_{0}(A)$.
Conversely, $0$ may be an eigenvalue of $A$ but $\lambda_{\pm n}\ne0$
for all $n$. We will also use the notation $\Lambda_{+}(A)$ for
the sequence $\{\lambda_{n}\}_{n=1}^{\infty}$. Thus $\Lambda_{0}(A)$
can be identified with the pair $(\Lambda_{+}(A),-\Lambda_{+}(-A))$.

We will denote by $c_{\downarrow0\uparrow}$ the collection of all
real sequences $(\alpha_{\pm n})_{n=1}^{\infty}$ such that\[
\alpha_{1}\ge\alpha_{2}\ge\cdots\ge0\ge\cdots\ge\alpha_{-2}\ge\alpha_{-1}\]
and $\lim_{n\to\infty}\alpha_{\pm n}=0$. If $\alpha\in c_{\downarrow0\uparrow}$,
we will denote by $\overline{\alpha}$ the sequence $(\beta_{\pm n})_{n=1}^{\infty}$
defined by $\beta_{k}=-\alpha_{-k}$ for all $k=\pm n$. With this
notation, we have $\Lambda_{0}(-A)=\overline{\Lambda_{0}(A)}$ for
all compact selfadjoint operators $A$. 

We will prove analogues of the Horn inequalities by compressing operators
to finite-dimensional subspaces. The following observation shows why
this is possible.

\begin{lem}
Let $A$ be a compact selfadjoint operator on $\mathfrak{H}$, let $P$
be an orthogonal projection on $\mathfrak{H}$, and set $\alpha=\Lambda_{0}(A),\beta=\Lambda_{0}(PAP|P\mathfrak{H})$.
Then we have \[
\alpha_{n}\ge\beta_{n}\ge\beta_{-n}\ge\alpha_{-n},\quad n\in\mathbb{N}.\]

\end{lem}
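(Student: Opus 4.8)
The first thing I would do is dispose of the easy parts. The middle inequality $\beta_n\ge\beta_{-n}$ is built into the definition of $\Lambda_0$: by construction the positive eigenvalues of $PAP|P\mathfrak H$ are listed as $\beta_1\ge\beta_2\ge\cdots\ge0$ and the negative ones as $0\ge\cdots\ge\beta_{-2}\ge\beta_{-1}$. Next I would note that the two outer inequalities are exchanged by the substitution $A\mapsto-A$: since $\Lambda_0(-A)=\overline{\Lambda_0(A)}$ and $P(-A)P=-(PAP)$, the inequality $\lambda_n(-A)\ge\lambda_n\bigl(P(-A)P|P\mathfrak H\bigr)$ says exactly $-\alpha_{-n}\ge-\beta_{-n}$, i.e.\ $\beta_{-n}\ge\alpha_{-n}$. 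So the whole lemma reduces to proving $\alpha_n\ge\beta_n$, that is $\lambda_n(A)\ge\lambda_n(B)$ with $B:=PAP|P\mathfrak H$.

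For this I would use the Courant--Fischer min--max principle, in the form
\[
\lambda_n(T)=\max\Bigl\{\,0,\ \sup_{\dim\mathcal M=n}\ \inf_{h\in\mathcal M,\,\|h\|=1}\langle Th,h\rangle\,\Bigr\},
\]
valid for a compact selfadjoint operator $T$ on a Hilbert space $\mathfrak K$ with $\dim\mathfrak K\ge n$ (and $\lambda_n(T)=0$ when $\dim\mathfrak K<n$), where the supremum is over $n$-dimensional subspaces $\mathcal M$ of $\mathfrak K$. This is classical when $\mathfrak K$ is finite dimensional, the inner $\sup$--$\inf$ being the $n$th largest eigenvalue in decreasing order; in the infinite-dimensional case I would verify by spectral calculus that the inner quantity is already $\ge0$ and equals $\lambda_n(T)$, so that the outer $\max\{0,\cdot\}$ is present only to absorb the bookkeeping convention $\lambda_n=0$ past the positive spectrum. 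The verification rests on three facts: spans of the top eigenvectors nearly realize the supremum; every $n$-dimensional subspace meets the orthogonal complement of the span of the positive eigenvectors, where the Rayleigh quotient is $\le0$; and the eigenvalues accumulate only at $0$, which is what pins the inner quantity at exactly $0$ rather than at a negative number once $T$ has fewer than $n$ positive eigenvalues.

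With the formula in hand the rest is immediate. If $\dim P\mathfrak H<n$ then $B$ has rank $<n$, so $\beta_n=0\le\alpha_n$. Otherwise every $n$-dimensional subspace $\mathcal M\subseteq P\mathfrak H$ is also an $n$-dimensional subspace of $\mathfrak H$, and for $h\in\mathcal M$ one has $Ph=h$, so
\[
\langle Bh,h\rangle=\langle PAPh,h\rangle=\langle APh,Ph\rangle=\langle Ah,h\rangle .
\]
Hence the supremum defining $\lambda_n(B)$ runs over a subfamily of the subspaces appearing in the supremum defining $\lambda_n(A)$, with the identical Rayleigh quotient; applying the monotone map $\max\{0,\cdot\}$ gives $\lambda_n(B)\le\lambda_n(A)$, i.e.\ $\beta_n\le\alpha_n$.

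I expect the only real obstacle to be the first step: writing down and justifying the variational formula so that it is simultaneously correct for compact operators on an infinite-dimensional space and compatible with the convention that $\lambda_n$ vanishes beyond the positive spectrum --- everything after that is formal. If one wished to bypass the general formula, one could argue directly: assuming $c:=\lambda_n(B)>0$, take $\mathcal M\subseteq P\mathfrak H$ to be the $n$-dimensional span of eigenvectors of $B$ for its $n$ largest eigenvalues, all $\ge c$, so that $\langle Ah,h\rangle=\langle Bh,h\rangle\ge c\|h\|^2$ on $\mathcal M$; then comparing $\dim\mathcal M$ with the finite rank of the spectral projection $\chi_{[c,\infty)}(A)$ --- and using that $\langle Ag,g\rangle<c\|g\|^2$ for every nonzero $g$ orthogonal to the range of that projection --- forces $A$ to have at least $n$ eigenvalues $\ge c$, that is $\lambda_n(A)\ge c=\lambda_n(B)$.
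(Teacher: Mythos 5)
Your argument is correct and is essentially the same as the paper's: the lemma is deduced from a Courant--Fischer variational characterization of $\lambda_n$ together with the observation that $\langle PAPh,h\rangle=\langle Ah,h\rangle$ for $h\in P\mathfrak H$, so the relevant extremum for $PAP|P\mathfrak H$ is taken over a subfamily of that for $A$. The only cosmetic difference is that you use the maximin form $\sup_{\dim\mathcal M=n}\inf_{h\in\mathcal M}$ (suitably truncated at $0$ to match the convention that $\lambda_n=0$ past the positive spectrum), whereas the paper invokes the dual minimax form $\inf_{\dim\mathfrak M<n}\sup_{h\in\mathfrak M^\perp}$; both routes handle the sign convention and the negative-index case by the same reduction you carry out via $A\mapsto -A$. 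Your added care in verifying that the truncation is compatible with the infinite-dimensional case (eigenvalues accumulating only at $0$) is exactly the bookkeeping the paper leaves implicit.
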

\begin{proof}
This is an immediate consequence of the usual variational formulas\begin{equation}
\alpha_{n}=\inf_{\dim\mathfrak{M}<n}\sup\{\langle Ah,h\rangle:h\in\mathfrak{M}^{\perp},\| h\|=1\},\quad n\in\mathbb{N},\label{Rayleigh}\end{equation}
and their analogues for negative $n$.
\end{proof}
If $A$ is an $N\times N$ matrix, we can use both $\Lambda(A)$ and
$\Lambda_{0}(A)$ to describe the eigenvalues of $A$. Let's say that
$\Lambda(A)=\alpha\in\mathbb{R}^{N}$ and $\Lambda_{0}(A)=\beta\in c_{\downarrow0\uparrow}$.
We will have then $\alpha_{n}=\beta_{n}$ if $\alpha_{n}>0$, and
$\alpha_{n}=\beta_{n-N-1}$ if $\alpha_{n}<0$.

\begin{prop}
\label{Prop:infinite-horn}Fix compact selfadjoint operators $A,B^{(1)},B^{(2)},\dots,B^{(m)}$
on $\mathfrak{H}$ such that $A\le\sum_{k=1}^{m}B^{(k)}$, a tuple $(I,J^{(1)},J^{(2)},\dots,J^{(m)})\in T_{r}^{N}(m+1)$,
and nonnegative integers $q_{1},q_{2},\dots,q_{m}$ such that $q=\sum_{k=1}^{m}q_{k}\le r$.
The sequences $\alpha=\Lambda_{0}(A)$, and $\beta^{(k)}=\Lambda_{0}(B^{(k)})$
satisfy the inequality\begin{equation}
\sum_{\ell=1}^{r-q}\alpha_{I(\ell)}+\sum_{\ell=r-q+1}^{r}\alpha_{I(\ell)-N-1}\le\sum_{k=1}^{m}\left(\sum_{\ell=1}^{r-q_{k}}\beta_{J^{(k)}(\ell)}^{(k)}+\sum_{\ell=r-q_{k}+1}^{r}\beta_{J^{(k)}(\ell)-N-1}^{(k)}\right).\label{eq:gen-horn-ineq}\end{equation}

\end{prop}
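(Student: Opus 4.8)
The plan is to derive (\ref{eq:gen-horn-ineq}) from its finite-dimensional counterpart, Theorem~\ref{Theo:Horn-conj-m}, by compressing $A$ and the operators $B^{(k)}$ to a suitable finite-dimensional subspace and applying that theorem to a Horn tuple manufactured from $(I,J^{(1)},\dots,J^{(m)})$ via the gap insertion of Lemma~\ref{lemma:inserting-gaps}. The bridge between the two pictures is a reformulation of the relation between $\Lambda$ and $\Lambda_{0}$: for a $d\times d$ Hermitian matrix $M$ one has
\[
\Lambda(M)_{n}=\Lambda_{0}(M)_{n}+\Lambda_{0}(M)_{n-d-1},\qquad 1\le n\le d,
\]
where at most one of the two summands is nonzero, $\Lambda_{0}(M)_{n}\ge0$, and $\Lambda_{0}(M)_{-n}\le0$ for $n\ge1$.

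Fix $\varepsilon>0$. Since $A,B^{(1)},\dots,B^{(m)}$ are compact, their spectral subspaces $\mathfrak{H}_{A},\mathfrak{H}_{B^{(1)}},\dots,\mathfrak{H}_{B^{(m)}}$ for $\{\lambda:|\lambda|>\varepsilon\}$ are finite-dimensional. Let $E$ be the orthogonal projection onto a finite-dimensional subspace $\mathfrak{M}$ containing $\mathfrak{H}_{A}+\mathfrak{H}_{B^{(1)}}+\cdots+\mathfrak{H}_{B^{(m)}}$, with $d:=\dim\mathfrak{M}$ as large as we please (enlarging $\mathfrak{M}$ keeps the containment). Set $M=d-N$, and let $(\tilde I,\tilde J^{(1)},\dots,\tilde J^{(m)})\in T_{r}^{d}(m+1)$ be produced from $(I,J^{(1)},\dots,J^{(m)})$, the integers $q,q_{1},\dots,q_{m}$, and the shift $M$ by Lemma~\ref{lemma:inserting-gaps} (here $q_{k}\le q\le r$), so that $\tilde I(\ell)=I(\ell)$ for $\ell\le r-q$, $\tilde I(\ell)=I(\ell)+M$ for $\ell>r-q$, and likewise for $\tilde J^{(k)}$ with $q_{k}$ replacing $q$. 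Since compressing to a subspace preserves the order, the $d\times d$ Hermitian matrices $A_{0}=EAE|_{\mathfrak{M}}$ and $B_{0}^{(k)}=EB^{(k)}E|_{\mathfrak{M}}$ satisfy $A_{0}\le\sum_{k=1}^{m}B_{0}^{(k)}$, and Theorem~\ref{Theo:Horn-conj-m}, applied to them and to the tuple $(\tilde I,\tilde J^{(1)},\dots,\tilde J^{(m)})$, yields
\begin{equation}
\sum_{\ell=1}^{r}\Lambda(A_{0})_{\tilde I(\ell)}\le\sum_{k=1}^{m}\sum_{\ell=1}^{r}\Lambda(B_{0}^{(k)})_{\tilde J^{(k)}(\ell)}.\label{eq:plan-compressed}
\end{equation}

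To finish I would expand each term of (\ref{eq:plan-compressed}) by the displayed identity and compare with (\ref{eq:gen-horn-ineq}). For $\ell\le r-q$ the index $\tilde I(\ell)=I(\ell)\le N$ contributes $\Lambda_{0}(A_{0})_{I(\ell)}$ plus a term indexed by the far-negative $I(\ell)-d-1$; for $\ell>r-q$ one has $\tilde I(\ell)-d-1=I(\ell)-N-1$, which reproduces exactly the negative-side term of (\ref{eq:gen-horn-ineq}), plus a term indexed by the far-positive $I(\ell)+d-N$; the same holds for the $\tilde J^{(k)}$. Now the preceding (compression) lemma gives $\Lambda_{0}(A_{0})_{n}\le\Lambda_{0}(A)_{n}$ and $\Lambda_{0}(A_{0})_{-n}\ge\Lambda_{0}(A)_{-n}$, with the analogous inequalities for $B_{0}^{(k)}$; moreover every eigenvalue of absolute value exceeding $\varepsilon$ of $A$ (and of each $B^{(k)}$) persists in $A_{0}$ (resp.\ $B_{0}^{(k)}$), since $\mathfrak{M}$ contains the relevant spectral subspace and hence $A_{0}$ is $A|_{\mathfrak{H}_{A}}$ direct-summed with an operator of norm at most $\varepsilon$. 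Together with $\Lambda_{0}(A)_{\pm n}\to0$, these facts show that for $d$ large enough the left side of (\ref{eq:plan-compressed}) is at least the left side of (\ref{eq:gen-horn-ineq}) reduced by $2r\varepsilon$, while the right side of (\ref{eq:plan-compressed}) is at most the right side of (\ref{eq:gen-horn-ineq}) increased by $2r\varepsilon$. Combining with (\ref{eq:plan-compressed}) and letting $\varepsilon\to0$ yields (\ref{eq:gen-horn-ineq}).

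The step that needs care, and that I would write out in full, is the last comparison: one must verify that each spurious term created by the expansion---those indexed by $I(\ell)-d-1$, $I(\ell)+d-N$, and their $J^{(k)}$-analogues---lies in $[-\varepsilon,0]$ or in $[0,\varepsilon]$. This holds once $d$ is taken larger than $N$ plus the number of eigenvalues of $A$ and of each $B^{(k)}$ of absolute value greater than $\varepsilon$; compactness ensures there are only finitely many such eigenvalues, so such a $d$ is available for the given $\varepsilon$. One must also keep track of which $\Lambda_{0}$-index each shifted entry corresponds to; the choice $M=d-N$ is forced precisely by the requirement that $\tilde I(\ell)-d-1=I(\ell)-N-1$ and $\tilde J^{(k)}(\ell)-d-1=J^{(k)}(\ell)-N-1$ for the large indices, so as to match the negative-side terms of (\ref{eq:gen-horn-ineq}).
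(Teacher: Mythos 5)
Your proof is correct and follows essentially the same route as the paper: compress to a finite-dimensional subspace containing enough of the spectral data, apply Lemma~\ref{lemma:inserting-gaps} to manufacture a tuple in $T_{r}^{N+M}(m+1)$, and invoke the finite-dimensional Horn inequality of Theorem~\ref{Theo:Horn-conj-m}. The paper's version is terser---it chooses $P$ so that $\alpha'_{\pm n}=\alpha_{\pm n}$ exactly for $n\le N$ and asserts the resulting Horn inequality is precisely the desired one (which is literally true only when the relevant $\Lambda_{0}$-entries are nonzero)---whereas you track the spurious far-out indices $I(\ell)-d-1$ and $I(\ell)+d-N$ explicitly and dispose of them with an $\varepsilon$-argument, a more careful rendering of the same idea.
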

\begin{proof}
If $P$ is an orthogonal projection of finite rank, we have $PAP\le\sum_{k=1}^{m}PB^{(k)}P$,
and therefore the vectors $\Lambda(PAP),\Lambda(PB^{(k)}P)$ must
satisfy the Horn inequalities. Let us choose $P$ so that its range
contains all the eigenvectors of $A$ and $B^{(k)}$ corresponding
with the eigenvalues $\alpha_{\pm n}$ and $\beta_{\pm n}^{(k)}$
for $n\le N$. If we denote $\alpha'=\Lambda_{0}(PAP|P\mathfrak{H})$
and $\beta'^{(k)}=\Lambda_{0}(PB^{(k)}P|P\mathfrak{H})$, the preceding
lemma shows that $\alpha'_{\pm n}=\alpha_{\pm n}$ and $\beta_{\pm n}^{\prime(k)}=\beta_{\pm n}$
for $n\le N$. Assume that the rank of $P$ is $N+M$, and let $(I',J'^{(1)},\dots,J'^{(m)})\in T_{r}^{N+M}(m+1)$
be constructed as in Lemma \ref{lemma:inserting-gaps}. This new $(m+1)$-tuple
provides a Horn inequality for $PAP|P\mathfrak{H}$ and $PB^{(k)}P|P\mathfrak{H}$,
and this inequality is precisely the conclusion of the proposition.
\end{proof}
The inequalities (\ref{eq:gen-horn-ineq}) will be referred to as
the \emph{extended Horn inequalities.}

The converse of the preceding result is also true. We will actually
prove a stronger result which also yields an analogue of Horn's conjecture.
In order to simplify the statement of the result, we will say that
an $(m+1)$-tuple $(\alpha,\beta^{(1)},\dots,\beta^{(k)})$ of sequences
in $c_{\downarrow0\uparrow}$ satisfies all the Horn inequalities
if they satisfy the conclusion of Proposition \ref{Prop:infinite-horn}
for every $r\le N$ and every $(I,J^{(1)},J^{(2)},\dots,J^{(m)})\in T_{r}^{N}(m+1)$
and every choice of $q_{1},q_{2},\dots,q_{m}$.

For the following preliminary result, it is useful to endow $c_{\downarrow0\uparrow}$
with the topology of uniform convergence. A particular case of this
result appears as \cite[Lemma 5.2]{BLS}. We include a proof for completeness.

\begin{lem}
\label{Lemma:closure-of-lambdas}For each $n\in\mathbb{N}$, let $A(n),B^{(1)}(n),\dots,B^{(m)}(n)$
be compact selfadjoint operators such that $A(n)=\sum_{k=1}^{m}B^{(k)}(n)$.
Assume that $\lim_{n\to\infty}\Lambda_{0}(A(n))=\alpha$ and $\lim_{n\to\infty}\Lambda_{0}(B^{(k)}(n))=\beta^{(k)}$
for $k=1,2,\dots,m$. Then there exist compact selfadjoint operators
$A,B^{(1)},\dots,B^{(m)}$ such that $A=\sum_{k=1}^{m}B^{(k)}$, $\Lambda_{0}(A)=\alpha$,
and $\Lambda_{0}(B^{(k)})=\beta^{(k)}$ for $k=1,2,\dots,m$.
\end{lem}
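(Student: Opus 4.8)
The plan is to realize $A$ and the $B^{(k)}$ on an \emph{ultraproduct} of the Hilbert spaces carrying the given operators; this device automatically absorbs the eigenvalue mass that would escape to infinity under a naive limit. (For contrast, the obvious first attempt --- restrict to a separable reducing subspace, pass to a subsequence, and let $B^{(k)}$ be the weak-operator limit of the $B^{(k)}(n)$, so that $A:=\sum_k B^{(k)}$ is the weak-operator limit of the $A(n)$ --- does produce compact selfadjoint operators with $A=\sum_k B^{(k)}$, and $(\ref{Rayleigh})$ shows that the entries of $\Lambda_0(B^{(k)})$ lie between $0$ and those of $\beta^{(k)}$; but these inequalities may be strict, for instance when $m=1$ and $B^{(1)}(n)$ is the rank-one projection onto the $n$th vector of an orthonormal basis, so the escaped mass would still have to be reinstated by hand.) Instead I would fix a nonprincipal ultrafilter $\mathcal{U}$ on $\mathbb{N}$ and form, in the usual way, the Hilbert space ultraproduct $\mathfrak{H}_{\mathcal{U}}$ of the spaces on which $A(n),B^{(1)}(n),\dots,B^{(m)}(n)$ act. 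Since $\|A(n)\|$ and $\|B^{(k)}(n)\|$ equal the largest absolute values of the corresponding eigenvalues and these converge, the sequences are uniformly bounded and hence induce bounded operators $A_{\mathcal{U}},B^{(1)}_{\mathcal{U}},\dots,B^{(m)}_{\mathcal{U}}$ on $\mathfrak{H}_{\mathcal{U}}$; these are selfadjoint and satisfy $A_{\mathcal{U}}=\sum_{k=1}^{m}B^{(k)}_{\mathcal{U}}$, because the same identities hold at each level $n$.

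The heart of the matter is to show that $\Lambda_0(B^{(k)}_{\mathcal{U}})=\beta^{(k)}$ and $\Lambda_0(A_{\mathcal{U}})=\alpha$. Here I would use that the convergence of the eigenvalue sequences is \emph{uniform}: combined with the compactness of each individual $B^{(k)}(n)$ and with $\beta^{(k)}\in c_{\downarrow0\uparrow}$, this gives a collective compactness statement --- for every $\varepsilon>0$ there is an integer $N_\varepsilon$ with $|\beta^{(k)}(n)_{\pm j}|<\varepsilon$ for all $n$, all $k$ and all $j\ge N_\varepsilon$, and similarly for $\alpha(n)$, so that the spectral projection of $B^{(k)}(n)$ for $\{\lambda:|\lambda|\ge\varepsilon\}$ has rank at most $2(N_\varepsilon-1)$ uniformly in $n$. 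A computation inside the ultraproduct then identifies the spectral subspace of $B^{(k)}_{\mathcal{U}}$ for $\{\lambda:|\lambda|>\varepsilon\}$ with the ultraproduct of the corresponding finite-rank spectral subspaces of the $B^{(k)}(n)$: it is finite-dimensional, $B^{(k)}_{\mathcal{U}}$ acts on it with eigenvalues $\lim_{\mathcal{U}}\beta^{(k)}(n)_{\pm j}=\beta^{(k)}_{\pm j}$, and the multiplicities are correct because by uniform convergence they are eventually constant in $n$. Letting $\varepsilon$ decrease to $0$ shows that $B^{(k)}_{\mathcal{U}}$ has no continuous spectrum off $0$ and that its nonzero eigenvalues, counted with multiplicity, are exactly those recorded by $\beta^{(k)}$; hence $\Lambda_0(B^{(k)}_{\mathcal{U}})=\beta^{(k)}$, and the same argument yields $\Lambda_0(A_{\mathcal{U}})=\alpha$. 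I expect this identification of the ultraproduct spectral subspaces to be the main obstacle: it is precisely where collective compactness and the uniformity of the convergence must both be used, and where one must rule out spectral mass leaking into a continuous part or being created out of nothing.

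Finally I would descend from the non-separable $\mathfrak{H}_{\mathcal{U}}$ to a separable Hilbert space. Let $\mathfrak{H}_0\subset\mathfrak{H}_{\mathcal{U}}$ be the closed linear span of all eigenvectors of $A_{\mathcal{U}}$ and of $B^{(1)}_{\mathcal{U}},\dots,B^{(m)}_{\mathcal{U}}$ corresponding to nonzero eigenvalues; by the preceding step $\mathfrak{H}_0$ is separable. Each $B^{(k)}_{\mathcal{U}}$ has range contained in $\mathfrak{H}_0$ and kernel containing $\mathfrak{H}_0^{\perp}$, so it reduces $\mathfrak{H}_0$; hence so does $A_{\mathcal{U}}=\sum_k B^{(k)}_{\mathcal{U}}$. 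Put $A=A_{\mathcal{U}}|\mathfrak{H}_0$ and $B^{(k)}=B^{(k)}_{\mathcal{U}}|\mathfrak{H}_0$: these are compact selfadjoint operators on a separable Hilbert space with $A=\sum_{k=1}^{m}B^{(k)}$, and since the restriction only discards part of the kernels it leaves every sequence $\Lambda_0$ unchanged. Therefore $\Lambda_0(A)=\alpha$ and $\Lambda_0(B^{(k)})=\beta^{(k)}$, as required.
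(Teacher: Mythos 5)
Your proposal is correct in substance, but it follows a genuinely different route from the paper. The paper's argument is elementary and hands-on: it first conjugates all of $A(n),B^{(1)}(n),\dots,B^{(m)}(n)$ by a single unitary $U(n)$ so that, for each $i$, the eigenvectors of all these operators corresponding to $\alpha(n)_{\pm j}$ and $\beta^{(k)}(n)_{\pm j}$ with $j\le i$ sit inside a \emph{fixed} $3(m+1)i$-dimensional subspace $\mathfrak{H}_i$. Because of the uniform convergence hypothesis, this yields $\|A(n)-P_{\mathfrak{H}_i}A(n)P_{\mathfrak{H}_i}\|\le\max\{|\alpha(n)_{i+1}|,|\alpha(n)_{-i-1}|\}\to 0$ uniformly in $n$, so the family $\{A(n)\}$ (and likewise each $\{B^{(k)}(n)\}$) is precompact in \emph{operator norm}; a common norm-convergent subsequence then gives the limits $A,B^{(k)}$, and the variational formulas pass to the norm limit directly. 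Your ultraproduct argument uses the very same collective-compactness information (the uniform bound $2(N_\varepsilon-1)$ on the rank of the spectral projections for $\{|\lambda|\ge\varepsilon\}$), but packages it differently: instead of producing a norm-convergent subsequence, you let a nonprincipal ultrafilter make all the choices, then recover compactness of $A_{\mathcal{U}}$ and $B^{(k)}_{\mathcal{U}}$ from the fact that they are uniformly approximable in norm by the ultraproducts of the bounded-rank truncations, and match the eigenvalue sequences by identifying the finite-dimensional spectral subspaces. This is sound, and the step you flag as the main obstacle (the identification of ultraproduct spectral subspaces, handled via continuous functional calculus and explicit eigenvector classes, plus the descent to a separable reducing subspace at the end) can indeed be carried out; it is standard ultraproduct technology. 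What the paper's approach buys is economy: no ultraproducts, no separability descent, and norm convergence of a subsequence already implies convergence of $\Lambda_0$ by the variational formulas, so there is nothing further to verify. What your approach buys is that the subsequence extraction and the unitary conjugation are absorbed into the ultrafilter; but it does so at the cost of the rather delicate spectral-subspace bookkeeping, which in the paper's proof is replaced by a one-line norm estimate. Your warning that a naive weak-operator limit loses eigenvalue mass is a good observation and explains exactly why some form of norm control or ultraproduct is needed.
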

\begin{proof}
Set $\alpha(n)=\Lambda_{0}(A(n))$ and $\beta^{(k)}(n)=\Lambda_{0}(B^{(k)}(n))$
for $k=1,2,\dots,m$. Assume that al the given operators act on the
same Hilbert space $\mathfrak{H}$ with orthonormal basis $(e_{j})_{j=1}^{\infty}$.
Denote by $\mathfrak{H}_{i}$, $i\in\mathbb{N}$, the linear space generated
by $\{ e_{j}:1\le j\le3(m+1)i\}$. Replacing, if necessary, the operators
$A(n)$ and $B^{(k)}(n)$ by $U(n)A(n)U(n)^{*}$ and $U(n)B^{(k)}(n)U(n)^{*}$
for some unitary $U(n)$, we may assume that $\mathfrak{H}_{i}$ contains
the eigenvectors of $A(n)$ corresponding with eigenvalues $\alpha(n)_{\pm j}$,
$j=1,\dots,i$, and the eigenvectors of $B^{(k)}(n)$ corresponding
with eigenvalues $\beta^{(k)}(n)_{\pm j}$, $j=1,2,\dots,i$. (The
space $\mathfrak{H}_{i}$ has sufficiently large dimension to also accomodate
$i$ zero eigenvectors for each of the operators $A(n)$ and $B^{(k)}(n)$,
in case such eigenvectors do exist.) Note that $\| A(n)-P_{\mathfrak{H}_{i}}A(n)P_{\mathfrak{H}_{i}}\|\le\max\{|\alpha(n)_{i+1}|,|\alpha(n)_{-i-1}|\}\to0$
as $i\to\infty$, and the convergence assumption also implies that
the norms $\| A(n)\|$ are bounded. We deduce that the sequence $(A(n))_{n=1}^{\infty}$
has a subsequence which converges in norm. A similar argument applies
to $B^{(k)}(n)$. Thus, dropping to a subsequence, we may assume that
the limits $A=\lim_{n\to\infty}A(n)$ and $B^{(k)}=\lim_{n\to\infty}B^{(k)}(n)$
exist. Clearly then $A=\sum_{k=1}^{m}B^{(k)}$, and the variational
formulas (\ref{Rayleigh}) imply that $\alpha=\lim_{n\to\infty}\Lambda_{0}(A(n))=\Lambda_{0}(A)$
and similarly $\beta^{(k)}=\Lambda_{0}(B^{(k)})$.
\end{proof}
We come now to the central result of this paper.

\begin{thm}
\label{Main-result}Fix $m\in\mathbb{N}$, and consider sequences
$\alpha',\alpha'',\beta'^{(k)},\beta''^{(k)}\in c_{\downarrow0\uparrow}$
for $k=1,2,\dots m$. Assume that both $(\alpha',\beta^{'(1)},\dots,\beta'^{(k)})$
and $(\overline{\alpha''},\overline{\beta''^{(1)}},\dots,\overline{\beta''^{(m)}})$
satisfy all the extended Horn inequalities $(\ref{eq:gen-horn-ineq})$.
Then there exist compact selfadjoint operators $A,B^{(1)},\dots,B^{(m)}$
such that $A=\sum_{k=1}^{m}B^{(k)}$, $\Lambda_{0}(A)$ is between
$\alpha'$ and $\alpha''$, and $\Lambda_{0}(B^{(k)})$ is between
$\beta'^{(k)}$ and $\beta''^{(k)}$ for $k=1,2,\dots,m$.
\end{thm}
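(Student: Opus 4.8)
The plan is to reduce the infinite-dimensional statement to the finite-dimensional interpolation result, Proposition~\ref{prop:interpolation-finiteN}, via a truncation-and-limit argument of the kind already used in the proof of Proposition~\ref{Prop:infinite-horn} and in Lemma~\ref{Lemma:closure-of-lambdas}. First I would fix $N\in\mathbb{N}$ and form the truncated data: from each of $\alpha',\alpha'',\beta'^{(k)},\beta''^{(k)}\in c_{\downarrow0\uparrow}$ extract the $2N$ entries indexed by $\pm1,\dots,\pm N$ and assemble them (in decreasing order, padding with zeros in the middle) into vectors $\tilde\alpha'(N),\tilde\alpha''(N),\tilde\beta'^{(k)}(N),\tilde\beta''^{(k)}(N)\in\mathbb{R}^{2N}$, exploiting the dictionary between $\Lambda$ and $\Lambda_0$ recorded just before Proposition~\ref{Prop:infinite-horn}. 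The hypothesis that $(\alpha',\beta'^{(1)},\dots,\beta'^{(m)})$ satisfies all the extended Horn inequalities \eqref{eq:gen-horn-ineq}, and that $(\overline{\alpha''},\overline{\beta''^{(1)}},\dots,\overline{\beta''^{(m)}})$ does too, should translate—via Lemma~\ref{lemma:inserting-gaps}, which is exactly the device that inserts a block of $M$ gaps to pass from $T_r^N$ to $T_r^{N+M}$—into the two families of hypotheses required by Proposition~\ref{prop:interpolation-finiteN} for the vectors in $\mathbb{R}^{2N}$: the ``$\le$'' inequalities indexed by $I$ for the primed data, and the ``$\ge$'' inequalities indexed by the complement $I^c$ for the double-primed data. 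The point is that a Horn triple in $T_r^{2N}(m+1)$, split at the location of the sign change of the eigenvalue sequences, encodes precisely a choice of a Horn triple $(I,J^{(1)},\dots,J^{(m)})\in T_r^N(m+1)$ together with integers $q_k$; Lemma~\ref{lemma:inserting-gaps} guarantees that every such split pattern is realized by a genuine element of $T_r^{2N}(m+1)$, so nothing is lost in the translation.

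Next, apply Proposition~\ref{prop:interpolation-finiteN} in dimension $2N$ to obtain Hermitian $2N\times2N$ matrices $A(N)=\sum_{k=1}^m B^{(k)}(N)$ with $\Lambda(A(N))$ between $\tilde\alpha'(N)$ and $\tilde\alpha''(N)$ and $\Lambda(B^{(k)}(N))$ between $\tilde\beta'^{(k)}(N)$ and $\tilde\beta''^{(k)}(N)$. Regard each $A(N),B^{(k)}(N)$ as a compact selfadjoint operator on $\mathfrak{H}$ by padding with zeros. By construction, the betweenness of the eigenvalue vectors, combined with $\alpha',\alpha''\to0$ and $\beta'^{(k)},\beta''^{(k)}\to0$, forces $\Lambda_0(A(N))$ and $\Lambda_0(B^{(k)}(N))$ to converge entrywise, as $N\to\infty$, to sequences $\alpha$ and $\beta^{(k)}$ lying between $\alpha',\alpha''$ and between $\beta'^{(k)},\beta''^{(k)}$ respectively; one should check this convergence is uniform, which follows because for $|n|>N$ the $n$th entry of $\Lambda_0(A(N))$ is squeezed between the $n$th entries of $\alpha'$ and $\alpha''$, both of which are small uniformly in $N$ once $n$ is large. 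Then Lemma~\ref{Lemma:closure-of-lambdas} applies directly: it produces compact selfadjoint $A,B^{(1)},\dots,B^{(m)}$ with $A=\sum_k B^{(k)}$, $\Lambda_0(A)=\alpha$, and $\Lambda_0(B^{(k)})=\beta^{(k)}$, which is exactly the conclusion.

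The main obstacle I anticipate is the bookkeeping in the translation step: verifying carefully that the extended Horn inequalities \eqref{eq:gen-horn-ineq} for the truncated sequences in $c_{\downarrow0\uparrow}$ are \emph{equivalent}, not merely implied by or implying, the finite Horn inequalities of Proposition~\ref{prop:interpolation-finiteN} for the associated $\mathbb{R}^{2N}$ vectors. One has to match indices across three different conventions—the $\pm n$ indexing of $c_{\downarrow0\uparrow}$, the decreasing indexing of $\mathbb{R}^{2N}$, and the combinatorial indexing $I(\ell),I(\ell)-N-1$ appearing in \eqref{eq:gen-horn-ineq}—and confirm that the $q_k$ parameters correspond exactly to the number of elements of $J^{(k)}$ lying in the ``negative'' block. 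Lemma~\ref{lemma:inserting-gaps} is tailored to make the nontrivial direction (every split pattern comes from a true Horn triple in the larger $T$) work, so the obstacle is one of care rather than of genuine mathematical difficulty; but it must be handled precisely, since the whole point of the theorem is that the extended reverse Horn inequalities, with their mismatched cardinalities, are the correct replacement for the trace identity, and that mismatch is exactly what the $q_k$'s and the index shift by $N+1$ account for.
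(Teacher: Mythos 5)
Your overall architecture — truncate to finite dimensions, apply Proposition~\ref{prop:interpolation-finiteN}, pass to a limit via Lemma~\ref{Lemma:closure-of-lambdas} — is the same as the paper's, but the truncation step contains a genuine gap, and it is precisely at the point where you wave at Lemma~\ref{lemma:inserting-gaps} as the device that makes the bookkeeping work out.

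The problem is your choice of a \emph{symmetric} truncation to $\mathbb{R}^{2N}$ with $N$ ``positive'' slots and $N$ ``negative'' slots. To verify the Horn inequality for a tuple $(I,J^{(1)},\dots,J^{(m)})\in T_r^{2N}(m+1)$ applied to the truncated vectors, you would set $p=|I\cap[N]|$ and $q_k=|J^{(k)}\setminus[N]|$ and then try to dominate the truncated left-hand side by the left-hand side of the extended inequality \eqref{eq:gen-horn-ineq}; this dominance requires $p+\sum_k q_k\le r$. But that bound fails for your symmetric split whenever $m\ge2$. Concretely, with $m=2$, $N=2$, the tuple $(I,J^{(1)},J^{(2)})=(\{2,4\},\{2,3\},\{1,3\})$ lies in $T_2^4(3)$ (the trace identity $3=2+1$ holds, and the three $s=1$ inequalities are $1\ge1$, $3\ge2$, $3\ge3$), yet $p=|\{2,4\}\cap[2]|=1$, $q_1=|\{2,3\}\setminus[2]|=1$, $q_2=|\{1,3\}\setminus[2]|=1$, so $p+q=3>2=r$. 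So for this tuple the extended Horn inequality you are hoping to invoke does \emph{not} dominate the Horn inequality for the $\mathbb{R}^{4}$-truncations, and your translation breaks down.

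The fix, which is what the paper actually does, is an \emph{asymmetric} truncation to $\mathbb{R}^{(m+1)n}$: for the ``small'' side ($\alpha'$, $\beta''^{(k)}$) keep only $n$ positive entries and $mn$ negative ones, and for the ``large'' side ($\alpha''$, $\beta'^{(k)}$) keep $mn$ positive and $n$ negative. With this split the needed combinatorial bound is exactly Proposition~\ref{Propo:facts-about-Tnr}(3): for $(I,J^{(1)},\dots,J^{(m)})\in\overline{T}_r^{(m+1)n}(m+1)$ one has $|I\cap[n]|+\sum_k|J^{(k)}\setminus[mn]|\le r$, which is proved there by a direct application of the Horn inequality to a carefully chosen family of projections in dimension $(m+1)n$. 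This is the lemma your argument needs, not Lemma~\ref{lemma:inserting-gaps}. Your remark that ``Lemma~\ref{lemma:inserting-gaps} guarantees that every such split pattern is realized by a genuine element of $T_r^{2N}(m+1)$'' is true but goes the wrong way: that lemma produces elements of the larger $T_r^{N+M}$ from elements of $T_r^N$ with prescribed $q_k$'s (and is used to prove the \emph{necessity} of the extended Horn inequalities in Proposition~\ref{Prop:infinite-horn}). What you need here is the opposite direction — control over an \emph{arbitrary} element of the larger Horn set when read against the split — and that is a dimension count, supplied only by Proposition~\ref{Propo:facts-about-Tnr}(3) and only in dimension $(m+1)n$.

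Your final limiting step via Lemma~\ref{Lemma:closure-of-lambdas} is fine, and would go through unchanged once the truncation is corrected.
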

\begin{proof}
For each $n\in\mathbb{N}$, we will construct decreasing vectors $\alpha'(n),$
$\alpha''(n),$ $\beta'^{(k)}(n),$ and $\beta''^{(k)}(n)$ in $\mathbb{R}^{(m+1)n}$
satisfying the hypothesis of Proposition \ref{prop:interpolation-finiteN}
(with $N=(m+1)n)$, and such that \[
\alpha'(n)_{i}=\alpha'_{i},\alpha''(n)_{i}=\alpha''_{i},\beta'^{(k)}(n)_{i}=\beta_{i}^{\prime(k)},\beta''^{(k)}(n)_{i}=\beta_{i}^{\prime\prime(k)},\]
for $i\le n$, while\[
\alpha'(n)_{i}=\alpha'_{i-N-1},\alpha''(n)_{i}=\alpha''_{i-N-1},\beta'^{(k)}(n)_{i}=\beta_{i-N-1}^{\prime(k)},\beta''^{(k)}(n)_{i}=\beta_{i-N-1}^{\prime\prime(k)},\]
 for $i>N-n$; here $N=(m+1)n$. Proposition \ref{prop:interpolation-finiteN}
will then imply the existence of selfadjoint operators $A(n),B^{(k)}(n)$
of rank $\le(m+1)n$ such that $A(n)=\sum_{k=1}^{m}B^{(k)}(n)$, $\Lambda_{0}(A(n))_{\pm i}$
is between $\alpha'_{\pm i}$ and $\alpha''_{\pm i}$ for $i\le n$,
and $\Lambda_{0}(B^{(k)}(n))_{\pm i}$ is between $\beta_{\pm i}^{\prime(k)}$
and $\beta_{\pm i}^{\prime\prime(k)}$ for $i\le n$. There exists
a sequence $n_{1}<n_{1}<\cdots$ such that $(\Lambda_{0}(A(n_{i})))_{i=1}^{\infty}$
and $(\Lambda_{0}(B^{(k)}(n_{i})))_{i=1}^{\infty}$ have limits $\alpha$
and $\beta^{(k)}$ in $c_{\downarrow0\uparrow}$. The conclusion of
the theorem is then reached by an application of the preceding lemma.

The remaining task is the construction of the vectors $\alpha'(n),\alpha''(n),\beta'^{(k)}(n)$
and $\beta''^{(k)}(n)$. They are defined so that $\alpha'(n)$ and
$\beta''^{(k)}(n)$ are as small as possible, while $\alpha''(n)$
and $\beta'^{(k)}(n)$ are as large as possible. More precisely,\[
\alpha'(n)_{i}=\begin{cases}
\alpha'_{i} & \text{if }i\le n\\
\alpha'_{i-N-1} & \text{if }i>n\end{cases}\]
and\[
\alpha''(n)_{i}=\begin{cases}
\alpha''_{i} & \text{if }i\le N-n\\
\alpha''_{i-N-1} & \text{if }i>N-n\end{cases}\]
with similar definitions for $\beta'^{(k)}(n)$ and $\beta''^{(k)}(n)$.
We verify next that $\alpha'(n)$ and $\beta'^{(k)}(n)$ satisfy the
Horn inequalities. Fix $(I,J^{(1)},\dots,J^{(k)})\in T_{r}^{(m+1)n}(m+1)$,
and set\[
p=|I\cap[n]|,\quad q_{k}=|I\setminus[mn]|,\quad q=q_{1}+q_{2}+\cdots+q_{k}.\]
Proposition \ref{Propo:facts-about-Tnr}(3) implies that $p+q\le r$.
In particular, $\alpha'$ and $\beta'^{(k)}$ must satisfy the inequality\[
\sum_{\ell=1}^{r-q}\alpha'_{I(\ell)}+\sum_{\ell=r-q+1}^{r}\alpha'_{I(\ell)-N-1}\le\sum_{k=1}^{m}\left(\sum_{\ell=1}^{r-q_{k}}\beta_{J^{(k)}(\ell)}^{\prime(k)}+\sum_{\ell=r-q_{k}+1}^{r}\beta_{J^{(k)}(\ell)-N-1}^{\prime(k)}\right),\]
with $N=(m+1)n$. The definition of $q_{k}$ shows that \[
\sum_{\ell=1}^{r-q_{k}}\beta_{J^{(k)}(\ell)}^{\prime(k)}+\sum_{\ell=r-q_{k}+1}^{r}\beta_{J^{(k)}(\ell)-N-1}^{\prime(k)}=\sum_{\ell=1}^{r}\beta'^{(k)}(n)_{J_{k}(\ell)}.\]
On the other hand\[
\sum_{\ell=1}^{r}\alpha'(n)_{I(\ell)}=\sum_{\ell=1}^{p}\alpha'_{I(\ell)}+\sum_{\ell=p+1}^{r}\alpha'_{I(\ell)-N-1}\le\sum_{\ell=1}^{r-q}\alpha'_{I(\ell)}+\sum_{\ell=r-q+1}^{r}\alpha'_{I(\ell)-N-1}.\]
The inequality is obtained because $r-q\ge p$, and hence some negative
terms in the left hand side are replaced by positive ones. We deduce
that \[
\sum_{\ell=1}^{r}\alpha'(n)_{I(\ell)}\le\sum_{k=1}^{m}\sum_{\ell=1}^{r}\beta'^{(k)}(n)_{J_{k}(\ell)},\]
 so that $\alpha'(n)$ and $\beta^{(k)}(n)$ satisfy all the applicable
Horn inequalities. The same argument applied to $\overline{\alpha''}$
and $\overline{\beta'^{(k)}}$, along with the equivalence of (2)
and (3) in Theorem \ref{Theo:Horn-conj-m}, shows that all the hypotheses
of Proposition \ref{prop:interpolation-finiteN} are satisfied. The
proof of the theorem is complete.
\end{proof}
Our version of the Horn conjecture follows easily by taking $\alpha'=\alpha''$
and $\beta'^{(k)}=\beta''^{(k)}$.

\begin{thm}
\label{Cor:Horn-conj}Consider sequences $\alpha,\beta^{(1)},\dots,\beta^{(m)}\in c_{\downarrow0\uparrow}$.
The following conditions are equivalent:
\begin{enumerate}
\item There exist compact selfadjoint operators $A,B^{(1)},\dots,B^{(k)}$
such that $A=\sum_{k=1}^{m}B^{(k)}$, $\Lambda_{0}(A)=\alpha$, and
$\Lambda_{0}(B^{(k)})=\beta^{(k)}$ for $k=1,2,\dots,m$. 
\item The $(m+1)$-tuples $(\alpha,\beta^{(1)},\dots,\beta^{(k)})$ and
$(\overline{\alpha},\overline{\beta^{(1)}},\dots,\overline{\beta^{(k)}})$
satisfy all the extended Horn inequalities $(\ref{eq:gen-horn-ineq})$.
\end{enumerate}
\end{thm}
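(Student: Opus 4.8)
The plan is to derive Theorem~\ref{Cor:Horn-conj} from the interpolation result in Theorem~\ref{Main-result} together with the necessity half already recorded in Proposition~\ref{Prop:infinite-horn}. The implication $(1)\Rightarrow(2)$ is immediate: if $A=\sum_{k=1}^m B^{(k)}$, then in particular $A\le\sum_{k=1}^m B^{(k)}$, so Proposition~\ref{Prop:infinite-horn} gives all the extended Horn inequalities~\eqref{eq:gen-horn-ineq} for $(\alpha,\beta^{(1)},\dots,\beta^{(m)})$; applying the same proposition to the operators $-A=\sum_{k=1}^m(-B^{(k)})$ and using $\Lambda_0(-A)=\overline{\Lambda_0(A)}=\overline\alpha$, $\Lambda_0(-B^{(k)})=\overline{\beta^{(k)}}$, we get the extended Horn inequalities for the barred tuple as well.

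For $(2)\Rightarrow(1)$, the idea is to feed the hypotheses of Theorem~\ref{Main-result} with the degenerate choice $\alpha'=\alpha''=\alpha$ and $\beta'^{(k)}=\beta''^{(k)}=\beta^{(k)}$. First I would check that these sequences lie in $c_{\downarrow0\uparrow}$ (they do, by assumption) and that the two hypotheses of Theorem~\ref{Main-result} hold: the tuple $(\alpha',\beta'^{(1)},\dots,\beta'^{(m)})=(\alpha,\beta^{(1)},\dots,\beta^{(m)})$ satisfies all the extended Horn inequalities by condition~(2), and the tuple $(\overline{\alpha''},\overline{\beta''^{(1)}},\dots,\overline{\beta''^{(m)}})=(\overline\alpha,\overline{\beta^{(1)}},\dots,\overline{\beta^{(m)}})$ does so as well, again by condition~(2). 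Theorem~\ref{Main-result} then produces compact selfadjoint operators $A,B^{(1)},\dots,B^{(m)}$ with $A=\sum_{k=1}^m B^{(k)}$ such that $\Lambda_0(A)$ is between $\alpha'=\alpha''=\alpha$ and $\Lambda_0(B^{(k)})$ is between $\beta'^{(k)}=\beta''^{(k)}=\beta^{(k)}$. Since ``between $\alpha$ and $\alpha$'' forces equality entry by entry, we conclude $\Lambda_0(A)=\alpha$ and $\Lambda_0(B^{(k)})=\beta^{(k)}$, which is exactly~(1).

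There is essentially no obstacle here; the theorem is a formal corollary. The only point deserving a word of care is the definition of ``between'': the notion in Section~3 was stated for vectors in $\mathbb{R}^N$, and one must note that its natural extension to sequences in $c_{\downarrow0\uparrow}$ (used implicitly in the statement of Theorem~\ref{Main-result}) is still an entrywise pinching, so that ``between $\alpha$ and $\alpha$'' means componentwise equality of the two-sided sequences indexed by $\pm n$. With that understood, the proof is a one-line specialization, and I would simply write: ``Apply Theorem~\ref{Main-result} with $\alpha'=\alpha''=\alpha$ and $\beta'^{(k)}=\beta''^{(k)}=\beta^{(k)}$; the converse follows from Proposition~\ref{Prop:infinite-horn} applied to $A\le\sum_k B^{(k)}$ and to $-A\le\sum_k(-B^{(k)})$.''
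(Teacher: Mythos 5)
Your proof is correct and matches the paper's own argument exactly: the paper simply observes that the theorem ``follows easily by taking $\alpha'=\alpha''$ and $\beta'^{(k)}=\beta''^{(k)}$'' in Theorem~\ref{Main-result}, with the converse direction being Proposition~\ref{Prop:infinite-horn} applied to both $A$ and $-A$. Your careful remark that ``between'' for sequences in $c_{\downarrow0\uparrow}$ means entrywise pinching on the doubly indexed sequence, so that ``between $\alpha$ and $\alpha$'' forces equality, is the right thing to check.
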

If only one half of the hypothesis of the theorem is satisfied, we
obtain an inequality in place of an equality.

\begin{cor}
\label{cor:Horn-conj-with-ineq}Fix $m\in\mathbb{N}$, and consider
sequences $\alpha,\beta^{(k)}\in c_{\downarrow0\uparrow}$ for $k=1,2,\dots m$.
Assume that $(\alpha',\beta^{'(1)},\dots,\beta'^{(k)})$ satisfy all
the extended Horn inequalities $(\ref{eq:gen-horn-ineq})$. Then there
exist compact selfadjoint operators $A,B^{(1)},\dots,B^{(m)}$ such
that $A\le\sum_{k=1}^{m}B^{(k)}$, $\Lambda_{0}(A)=\alpha$, and $\Lambda_{0}(B^{(k)})=\beta^{(k)}$
for $k=1,2,\dots,m$.
\end{cor}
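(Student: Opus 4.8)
The plan is to imitate only the \emph{forward} half of the proof of Theorem \ref{Main-result}: keep the finite–dimensional approximation, but invoke Theorem \ref{Theo:Horn-conj-m} in its inequality form (the equivalence of (1) and (2)) in place of Proposition \ref{prop:interpolation-finiteN}, and pass to the limit by an inequality version of Lemma \ref{Lemma:closure-of-lambdas}. I would deliberately \emph{not} try to quote Theorem \ref{Main-result} or Theorem \ref{Cor:Horn-conj} as black boxes: forcing $\Lambda_{0}(A)=\alpha$ and $\Lambda_{0}(B^{(k)})=\beta^{(k)}$ in Theorem \ref{Main-result} requires $\alpha''=\alpha$ and $\beta''^{(k)}=\beta^{(k)}$, which reinstates the missing half of the hypothesis, and the natural attempts to hide the slack $\sum_{k=1}^{m}B^{(k)}-A$ in an auxiliary negative summand $B^{(0)}\le0$ turn out to be circular.

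First I would fix $n$, set $N=(m+1)n$, and form the decreasing vectors $\alpha(n),\beta^{(k)}(n)\in\mathbb{R}^{N}$ exactly as in the construction in the proof of Theorem \ref{Main-result} applied to $\alpha'=\alpha$ and $\beta'^{(k)}=\beta^{(k)}$; thus $\alpha(n)$ consists of $\alpha_{1},\dots,\alpha_{n}$ followed by $\alpha_{-mn},\dots,\alpha_{-1}$, while $\beta^{(k)}(n)$ consists of $\beta^{(k)}_{1},\dots,\beta^{(k)}_{mn}$ followed by $\beta^{(k)}_{-n},\dots,\beta^{(k)}_{-1}$. The verification carried out in that proof — which rests on Proposition \ref{Propo:facts-about-Tnr}(3) and uses precisely the extended Horn inequalities for $(\alpha,\beta^{(1)},\dots,\beta^{(m)})$, i.e.\ the hypothesis of the corollary — shows that $\alpha(n)$ and $\beta^{(k)}(n)$ satisfy all the Horn inequalities of Theorem \ref{Theo:Horn-conj-m}(2). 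Only these ``forward'' inequalities enter here; the reverse ones, which in Theorem \ref{Main-result} came from the second half of the hypothesis through Proposition \ref{prop:interpolation-finiteN}, are not needed. By the implication $(2)\Rightarrow(1)$ of Theorem \ref{Theo:Horn-conj-m} there exist Hermitian $N\times N$ matrices $A(n),B^{(1)}(n),\dots,B^{(m)}(n)$ with $\Lambda(A(n))=\alpha(n)$, $\Lambda(B^{(k)}(n))=\beta^{(k)}(n)$, and $A(n)\le\sum_{k=1}^{m}B^{(k)}(n)$. Regarded as finite–rank operators on $\mathfrak{H}$, they satisfy $\Lambda_{0}(A(n))_{\pm i}=\alpha_{\pm i}$ and $\Lambda_{0}(B^{(k)}(n))_{\pm i}=\beta^{(k)}_{\pm i}$ for $i\le n$, so $\Lambda_{0}(A(n))\to\alpha$ and $\Lambda_{0}(B^{(k)}(n))\to\beta^{(k)}$ in $c_{\downarrow0\uparrow}$, just as in the proof of Theorem \ref{Main-result}.

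Finally I would pass to the limit by rerunning the proof of Lemma \ref{Lemma:closure-of-lambdas}. That proof uses the relation $A(n)=\sum_{k=1}^{m}B^{(k)}(n)$ only in its last line, to conclude that the norm limits satisfy $A=\sum_{k=1}^{m}B^{(k)}$; everything before it — the simultaneous unitary conjugation placing the relevant eigenvectors of all $m+1$ operators inside $\mathfrak{H}_{i}$ (whose dimension $3(m+1)i$ was chosen precisely for this), the resulting norm precompactness of each sequence, and the passage to a common norm–convergent subsequence $A(n_{j})\to A$, $B^{(k)}(n_{j})\to B^{(k)}$ — is insensitive to whether one starts from an equality or from $A(n)\le\sum_{k=1}^{m}B^{(k)}(n)$. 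Since the cone of positive operators is norm closed, $A(n_{j})\le\sum_{k=1}^{m}B^{(k)}(n_{j})$ yields $A\le\sum_{k=1}^{m}B^{(k)}$ in the limit, and the variational formulas $(\ref{Rayleigh})$ give $\Lambda_{0}(A)=\alpha$ and $\Lambda_{0}(B^{(k)})=\beta^{(k)}$.

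The step I would treat as the main (though mild) obstacle is confirming that none of the reused arguments secretly relies on the two–sided hypothesis of Theorem \ref{Main-result}: the construction of $\alpha(n),\beta^{(k)}(n)$ and the concentration step must be checked to use only one half of the input. This is exactly what makes the argument work — the inequality $A(n)\le\sum_{k=1}^{m}B^{(k)}(n)$ is equivalent to condition (2) alone in Theorem \ref{Theo:Horn-conj-m} — but it is the point where one should be careful rather than merely say ``as before''.
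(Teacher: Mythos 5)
Your proof is correct, but it takes a different route from the paper's. The paper's proof (three lines) invokes Theorem \ref{Main-result} with $\alpha'=\alpha$, $\beta'^{(k)}=\beta''^{(k)}=\beta^{(k)}$, and an auxiliary sequence $\alpha''\ge\alpha$ chosen so that $(\overline{\alpha''},\overline{\beta^{(1)}},\dots,\overline{\beta^{(m)}})$ satisfies the extended Horn inequalities; this produces $A'=\sum_k B^{(k)}$ with $\Lambda_0(A')\ge\alpha$ and $\Lambda_0(B^{(k)})=\beta^{(k)}$, and one then shrinks $A'$ to an operator $A\le A'$ with $\Lambda_0(A)=\alpha$. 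Your motivating remark --- that applying Theorem \ref{Main-result} would force $\alpha''=\alpha$ and thus reinstate the missing half of the hypothesis --- slightly mischaracterizes that strategy: the paper deliberately takes $\alpha''\ge\alpha$, tolerates the enlarged $\Lambda_0(A')$, and only afterwards replaces $A'$ by a smaller $A$. That said, the paper leaves the construction of $\alpha''$ entirely implicit, and one does need to convince oneself such an $\alpha''$ exists; your approach sidesteps this by retracing the finite-dimensional approximation directly. You correctly isolate that (i) the verification of the Horn inequalities for the truncated vectors $\alpha(n),\beta^{(k)}(n)$ uses only the ``forward'' half of the hypothesis (via Proposition \ref{Propo:facts-about-Tnr}(3)), (ii) Theorem \ref{Theo:Horn-conj-m}(2)$\Rightarrow$(1) in its inequality form supplies $A(n)\le\sum_k B^{(k)}(n)$ with the exact eigenvalue lists $\alpha(n),\beta^{(k)}(n)$ rather than merely interpolated ones, and (iii) the compactness argument in Lemma \ref{Lemma:closure-of-lambdas} uses the relation $A(n)=\sum_k B^{(k)}(n)$ only in the final line, which survives replacement by the inequality because the positive cone is norm closed. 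Net effect: your version is longer but more self-contained, making explicit what the paper compresses into ``find a sequence $\alpha''\ge\alpha$.''
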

\begin{proof}
Let us set $\beta'^{(k)}=\beta''^{(k)}=\beta^{(k)}$, $\alpha'=\alpha$,
and find a sequence $\alpha''\ge\alpha$ such that the hypotheses
of Theorem \ref{Main-result} are satisfied. We obtain operators $A',B^{(k)}$
such that $A'=\sum_{k=1}^{(m)}B^{(k)}$, $\Lambda_{0}(B^{(k)})=\beta^{(k)}$,
and $\Lambda_{0}(A')\ge\alpha$. To conclude the proof, choose an
operator $A\le A'$ with $\Lambda_{0}(A)=\alpha$.
\end{proof}
\begin{cor}
\label{cor--permutation}The sequences $\alpha,\beta^{(1)},\dots,\beta^{(m)}\in c_{\downarrow0\uparrow}$
satisfy all the extended Horn inequalities if and only if $\overline{\beta^{(1)}},\overline{\alpha},\beta^{(2)},\dots,\beta^{(k)}$
satisfy all the extended Horn inequalities.
\end{cor}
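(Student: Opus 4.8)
The plan is to derive the equivalence directly from Theorem \ref{Cor:Horn-conj}, translating the symmetry between the displayed tuples into a statement about compact selfadjoint operators. Recall that the extended Horn inequalities for $(\alpha,\beta^{(1)},\dots,\beta^{(m)})$ together with those for $(\overline{\alpha},\overline{\beta^{(1)}},\dots,\overline{\beta^{(m)}})$ are, by Theorem \ref{Cor:Horn-conj}, exactly the condition that there exist compact selfadjoint $A,B^{(1)},\dots,B^{(m)}$ with $A=\sum_{k=1}^m B^{(k)}$, $\Lambda_0(A)=\alpha$, and $\Lambda_0(B^{(k)})=\beta^{(k)}$. So it suffices to show that this existence statement is unchanged if we replace $\alpha$ by $\overline{\beta^{(1)}}$ and $\beta^{(1)}$ by $\overline{\alpha}$ (leaving the other $\beta^{(k)}$ fixed).

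First I would record the algebraic fact underlying the substitution: a relation $A = B^{(1)} + B^{(2)} + \cdots + B^{(m)}$ among compact selfadjoint operators can be rewritten as $(-B^{(1)}) = (-A) + B^{(2)} + \cdots + B^{(m)}$. Setting $\widetilde{A} = -B^{(1)}$, $\widetilde{B}^{(1)} = -A$, and $\widetilde{B}^{(k)} = B^{(k)}$ for $k\ge 2$, this is again a relation of the required form: one compact selfadjoint operator equal to the sum of $m$ others. Using the identity $\Lambda_0(-X) = \overline{\Lambda_0(X)}$ recorded in Section 4, we have $\Lambda_0(\widetilde{A}) = \overline{\beta^{(1)}}$, $\Lambda_0(\widetilde{B}^{(1)}) = \overline{\alpha}$, and $\Lambda_0(\widetilde{B}^{(k)}) = \beta^{(k)}$. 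Moreover the correspondence $(A,B^{(1)},\dots,B^{(m)}) \mapsto (\widetilde{A},\widetilde{B}^{(1)},\dots,\widetilde{B}^{(m)})$ is an involution, so it sets up a bijection between solutions of the two problems.

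Combining these observations: the tuple $(\alpha,\beta^{(1)},\dots,\beta^{(m)})$ is realizable (in the sense of Theorem \ref{Cor:Horn-conj}(1)) if and only if $(\overline{\beta^{(1)}},\overline{\alpha},\beta^{(2)},\dots,\beta^{(m)})$ is realizable, and by Theorem \ref{Cor:Horn-conj} each realizability statement is equivalent to the corresponding pair of extended Horn inequality systems holding. Hence $\alpha,\beta^{(1)},\dots,\beta^{(m)}$ satisfy all the extended Horn inequalities if and only if $\overline{\beta^{(1)}},\overline{\alpha},\beta^{(2)},\dots,\beta^{(m)}$ do, which is the assertion of the corollary. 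There is no real obstacle here; the only point to be slightly careful about is that the corollary's statement refers to ``all the extended Horn inequalities'' for a single tuple, so one must remember that, per the convention fixed before Theorem \ref{Main-result}, this phrase already packages together the inequalities for the tuple and for its conjugate — which is precisely what makes the symmetry manifest. One could alternatively give a purely combinatorial proof by exhibiting, for each Horn triple, an explicit correspondence with another Horn triple witnessing the dual inequality, using Corollary \ref{cor:Horn-indices-are-eigenvalues} and the relation between $\overline T_r^N$ and complements; but the operator-theoretic route above is shorter and avoids bookkeeping with the sets $T_r^N(m+1)$.
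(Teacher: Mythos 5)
There is a genuine gap. You appeal to Theorem~\ref{Cor:Horn-conj}, which characterizes the \emph{conjunction} ``$(\alpha,\beta^{(1)},\dots,\beta^{(m)})$ satisfies all extended Horn inequalities \emph{and} $(\overline{\alpha},\overline{\beta^{(1)}},\dots,\overline{\beta^{(m)}})$ satisfies all extended Horn inequalities'' as realizability with equality $A=\sum_k B^{(k)}$. Your operator swap $(A,B^{(1)},\dots)\mapsto(-B^{(1)},-A,B^{(2)},\dots)$ then shows that this conjunction is invariant under the substitution. But the corollary asserts invariance for a \emph{single} tuple satisfying the inequalities, not for the conjunction of a tuple and its conjugate; these are different statements, since the extended Horn inequalities are one-sided and a tuple may satisfy them while its conjugate does not. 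To bridge the gap you assert that ``per the convention fixed before Theorem~\ref{Main-result}, this phrase already packages together the inequalities for the tuple and for its conjugate.'' That is a misreading: the convention there says only that $(\alpha,\beta^{(1)},\dots,\beta^{(m)})$ satisfies all the Horn inequalities when \emph{it} satisfies~(\ref{eq:gen-horn-ineq}) for all $r,N,(I,J^{(1)},\dots,J^{(m)}),q_k$; the conjugate tuple is not included. Indeed, if it were, the hypothesis of Theorem~\ref{Main-result} (and condition~(2) of Theorem~\ref{Cor:Horn-conj}), which explicitly lists both the tuple and the conjugate, would be redundant.

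The fix is to use the one-sided results rather than the two-sided Theorem~\ref{Cor:Horn-conj}: Proposition~\ref{Prop:infinite-horn} together with Corollary~\ref{cor:Horn-conj-with-ineq} shows that a single tuple $(\alpha,\beta^{(1)},\dots,\beta^{(m)})$ satisfies all the extended Horn inequalities if and only if there exist compact selfadjoint operators with $A\le\sum_{k=1}^m B^{(k)}$, $\Lambda_0(A)=\alpha$, $\Lambda_0(B^{(k)})=\beta^{(k)}$. Your algebraic observation then applies in the form $A\le\sum_{k=1}^m B^{(k)}$ if and only if $-B^{(1)}\le -A+\sum_{k=2}^m B^{(k)}$, and $\Lambda_0(-X)=\overline{\Lambda_0(X)}$ transports the eigenvalue data as you describe. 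This is exactly the paper's argument; the error in your proposal is only in which realizability characterization you invoke, but it is not a cosmetic error, since without it the asserted equivalence does not follow.
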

\begin{proof}
This is a simple consequence of the fact that $A\le\sum_{k=1}^{m}B^{(k)}$
if and only if $-B^{(k)}\le-A+\sum_{k=2}^{m}B^{(k)}$. Alternatively,
one can verify that the two requirements on the given sequences are
in fact identical.
\end{proof}

\section{Partially Specified Eigenvalues}

In this section we will investigate under what conditions we can find
operators $A=\sum_{k=1}^{m}B^{(k)}$ such that $\Lambda_{0}(A)$ and
$\Lambda_{0}(B^{(k)})$ are only partially specified. We start with
the matrix case, which is easier to handle. A decreasing function
$\alpha$ defined on a subset of $[N]$ will be called a partially
specified decreasing vector in $\mathbb{R}^{N}$. Fix such a partially
specified vector $\alpha=(\alpha_{i_{1}}\ge\alpha_{i_{2}}\ge\cdots\ge\alpha_{i_{p}})$,
where $i_{1}<i_{2}<\cdots<i_{p}$. We define vectors $\alpha^{\min}$
and $\alpha^{\max}$ as follows:\[
\alpha_{i}^{\min}=\begin{cases}
\alpha_{i_{1}} & \text{if }i\le i_{1}\\
-\infty & \text{if }i_{p}<i\le N\\
\alpha_{i_{j+1}} & \text{if }i_{j}<i\le i_{j+1}\end{cases},\quad\alpha_{i}^{\max}=\begin{cases}
+\infty & \text{if }i<i_{1}\\
\alpha_{i_{p}} & \text{if }i_{p}\le i\le N\\
\alpha_{i_{j}} & \text{if }i_{j}\le i<i_{j+1}\end{cases}.\]
A decreasing vector $\beta\in\mathbb{R}^{N}$ agrees with $\alpha$
on the specified indices if and only if $\alpha^{\min}\le\beta\le\alpha^{\max}$;
we will write $\beta\supset\alpha$ when this happens.

In the following statement, the various vectors need not be specified
on the same collection of indices.

\begin{prop}
\label{pro:partial-matrixCase}Fix $m,N\in\mathbb{N}$, and consider
partially specified decreasing vectors $\alpha',\alpha'',\beta'^{(k)},\beta''^{(k)}$
in $\mathbb{R}^{N}$ for $k=1,2,\dots m$. The following conditions
are equivalent:
\begin{enumerate}
\item There exist Hermitian $N\times N$ matrices $A,B^{(1)},\dots,B^{(m)}$such
that $A=\sum_{k=1}^{m}B^{(k)}$, $\Lambda(A)\supset\alpha$, and $\Lambda(B^{(k)})\supset\beta^{(k)}$
for $k=1,2,\dots,m$.
\item For every $r\le N$ and every $(I,J^{(1)},\dots,J^{(m)})\in T_{r}^{N}(m+1)$
the inequalities\[
\sum_{i\in I}\alpha_{i}^{\min}\le\sum_{k=1}^{m}\sum_{j\in I^{(k)}}\beta_{j}^{(k)\max}\]
and \[
\sum_{i\notin I}\alpha_{i}^{\max}\ge\sum_{k=1}^{m}\sum_{j\notin J^{(k)}}\beta_{j}^{(k)\min}\]
are satisfied.
\end{enumerate}
\end{prop}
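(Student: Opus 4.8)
The plan is to reduce Proposition \ref{pro:partial-matrixCase} to Proposition \ref{prop:interpolation-finiteN}. The implication $(1)\Rightarrow(2)$ is the routine direction: if such matrices exist, then $\Lambda(A)$ is a genuine decreasing vector satisfying $\alpha^{\min}\le\Lambda(A)\le\alpha^{\max}$, and similarly for the $B^{(k)}$, so applying the Horn inequalities of Theorem \ref{Theo:Horn-conj-m}(2) (in both its form (2) and form (3)) to the honest eigenvalue vectors and then estimating $\Lambda(A)\circ I$ from below by $\alpha^{\min}\circ I$ and $\Lambda(B^{(k)})\circ J^{(k)}$ from above by $\beta^{(k)\max}\circ J^{(k)}$ yields the first displayed inequality; the second follows the same way from condition (3) of Theorem \ref{Theo:Horn-conj-m}. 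Here one must be slightly careful with the $\pm\infty$ entries of $\alpha^{\min},\alpha^{\max}$: if any of them appears in a sum occurring in (2), that inequality is vacuously true, so we may restrict attention to those $(I,J^{(1)},\dots,J^{(m)})$ for which every entry involved is finite.

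For the substantive direction $(2)\Rightarrow(1)$, the idea is to replace the partially specified data by the honest decreasing vectors $\alpha'=\alpha^{\min}$, $\alpha''=\alpha^{\max}$, $\beta'^{(k)}=\beta^{(k)\min}$, $\beta''^{(k)}=\beta^{(k)\max}$, and feed these into Proposition \ref{prop:interpolation-finiteN}. Condition (2) is exactly the hypothesis of that proposition (the first inequality there with $\alpha'=\alpha^{\min}$, $\beta'^{(k)}=\beta^{(k)\max}$, and the second with $\alpha''=\alpha^{\max}$, $\beta''^{(k)}=\beta^{(k)\min}$, using once more the identification of conditions (2) and (3) of Theorem \ref{Theo:Horn-conj-m} to pass between the two forms of the Horn inequalities). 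Proposition \ref{prop:interpolation-finiteN} then produces Hermitian matrices $A=\sum_k B^{(k)}$ with $\Lambda(A)$ between $\alpha^{\min}$ and $\alpha^{\max}$ and $\Lambda(B^{(k)})$ between $\beta^{(k)\min}$ and $\beta^{(k)\max}$; by the description of $\supset$ in terms of $\alpha^{\min}\le\beta\le\alpha^{\max}$, and since $\alpha^{\min}$ and $\alpha^{\max}$ are themselves decreasing, being ``between'' them coordinatewise is the same as $\alpha^{\min}\le\Lambda(A)\le\alpha^{\max}$, i.e.\ $\Lambda(A)\supset\alpha$, and similarly $\Lambda(B^{(k)})\supset\beta^{(k)}$. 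That is exactly (1).

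The main obstacle is the presence of the infinite entries $\pm\infty$ in $\alpha^{\min},\alpha^{\max},\beta^{(k)\min},\beta^{(k)\max}$, which Proposition \ref{prop:interpolation-finiteN} does not literally allow. The fix is a truncation and limiting argument: replace each occurrence of $+\infty$ by a large real parameter $R$ and each occurrence of $-\infty$ by $-R$, obtaining genuine decreasing vectors $\alpha'_R,\alpha''_R,\beta'^{(k)}_R,\beta''^{(k)}_R$ in $\mathbb{R}^N$. For $R$ large enough, condition (2) for the original data implies the corresponding inequalities for the truncated data — for a fixed Horn triple the only sums affected are ones that become arbitrarily large in the ``correct'' direction, so no inequality is violated once $R$ exceeds a bound depending only on $N$ and the finite entries. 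Proposition \ref{prop:interpolation-finiteN} then gives matrices $A_R=\sum_k B^{(k)}_R$ whose spectra lie in the required boxes; since the finite specified entries pin down $\Lambda(A_R)$ and $\Lambda(B^{(k)}_R)$ on the specified indices and the norms stay bounded (the unspecified eigenvalues can be taken to lie between the nearest specified ones, hence bounded independently of $R$), a compactness argument as in Lemma \ref{Lemma:closure-of-lambdas} — or simply passing to a convergent subsequence in the finite-dimensional unitary group — produces the desired $A,B^{(k)}$ in the limit. One should also record the integer refinement: if all finite specified entries are integers, the integer version of Proposition \ref{prop:interpolation-finiteN} gives $A_R,B^{(k)}_R$ with integer spectra, and the limit can be taken so that the specified entries remain those integers.
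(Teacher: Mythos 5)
Your proof is correct and follows essentially the same route as the paper: for $(2)\Rightarrow(1)$, truncate the $\pm\infty$ entries of $\alpha^{\min},\alpha^{\max},\beta^{(k)\min},\beta^{(k)\max}$ at a single sufficiently large constant and invoke Proposition \ref{prop:interpolation-finiteN}. One small remark: the closing compactness/limiting step is superfluous. Once the cutoff $C$ is chosen large enough that the truncated vectors satisfy the hypotheses of Proposition \ref{prop:interpolation-finiteN}, the resulting matrices $A,B^{(k)}$ already satisfy $\Lambda(A)\supset\alpha$ and $\Lambda(B^{(k)})\supset\beta^{(k)}$ outright, because on the specified indices the truncated upper and lower bounds coincide with the prescribed values and on the unspecified indices the relevant bound in $\alpha^{\min}$ or $\alpha^{\max}$ is $\pm\infty$, so the truncated constraint $\pm C$ is strictly weaker; there is no need to let $R\to\infty$ or pass to a subsequence. (Also, the first mention of $\beta'^{(k)}=\beta^{(k)\min}$, $\beta''^{(k)}=\beta^{(k)\max}$ has the roles swapped, though you correct this in the parenthetical that follows.)
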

\begin{proof}
The implication $(1)\Rightarrow(2)$ follows immediately from the
fact that the vectors $\Lambda(A)$ and $\Lambda(B^{(k)})$ satisfy
all the Horn inequalities and the trace identity. Conversely, assume
that $(2)$ is satisfied. Given a large positive constant $C$, replace
all the $-\infty$ entries of $\alpha^{\min}$ by $-C$ to obtain
a vector $\alpha'$. Similarly, define $\beta'^{(k)}$ by replacing
the $\infty$ entries of $\beta^{(k)\max}$ by $C$. If $C$ is sufficiently
large, then the inequalities

\[
\sum_{i\in I}\alpha'_{i}\le\sum_{k=1}^{m}\sum_{j\in I^{(k)}}\beta{}_{j}^{\prime(k)}\]
will be satisfied for every $r\le N$ and every $(I,J^{(1)},\dots,J^{(m)})\in T_{r}^{N}(m+1)$.
Indeed, those inequalities which do not involve $C$ are satisfied
by hypothesis, and the remaining ones amount to a finite number of
restrictions which are satisfied for sufficiently large $C$. On the
other hand, replacing the $\infty$ entries of $\alpha^{\max}$ by
$C$ and the $-\infty$ entries of $\beta^{(k)\min}$ by $-C$, we
obtain decreasing sequences $\alpha''$and $\beta^{\prime\prime(k)}$
such that \[
\sum_{i\notin I}\alpha''_{i}\ge\sum_{k=1}^{m}\sum_{j\notin J^{(k)}}\beta_{j}^{\prime\prime(k)}\]
for every $r\le N$ and every $(I,J^{(1)},\dots,J^{(m)})\in T_{r}^{N}(m+1)$.
Proposition \ref{prop:interpolation-finiteN} yields then the desired
matrices.
\end{proof}
Observe that condition (2) above lists some inequalities with infinite
terms. More precisely, there may be $-\infty$ terms in the left-hand
side and/or $+\infty$ terms in the right-hand side. These inequalities
are automatically satisfied, so one only needs to consider those inequalities
where all the terms are finite, and therefore are equal to some of
the specified entries of the given vectors. To illustrate this result,
consider the case in which $\beta^{(1)},\dots,\beta^{(m)}$ are fully
specified decreasing vectors, but only $\alpha_{p}$ is specified.
In this case, \[
\alpha^{\min}=(\underbrace{\alpha_{p},\dots,\alpha_{p}}_{p},\underbrace{-\infty,\dots,-\infty}_{N-p})\]
and \[
\alpha^{\max}=(\underbrace{\infty,\dots,\infty}_{p-1},\underbrace{\alpha_{p},\dots,\alpha_{p}}_{N-p+1}).\]
The relevant upper estimates are then\[
r\alpha_{p}=\sum_{i\in I}\alpha_{i}^{\min}\le\sum_{k=1}^{m}\sum_{j\in J^{(k)}}\beta_{j}^{(k)},\]
where $(I,J^{(1)},\dots,J^{(m)})\in T_{r}^{N}(m+1)$ and $I\subset[p]$.
Many of these inequalities are in fact redundant. Thus, we must have
\[
\sum_{k=1}^{m}(J^{(k)}(r)-1)\le I(r)-1\le p-1,\]
and therefore\[
\sum_{k=1}^{m}(J^{(k)}(\ell)-1)\le p-1,\quad\ell=1,2,\dots,r.\]
It follows that all of these inequalities will be satisfied if\[
\alpha_{p}\le\min\left\{ \sum_{k=1}^{m}\beta_{j_{k}}^{(k)}:\sum_{k=1}^{m}(j_{k}-1)=p-1\right\} .\]
Similarly, the lower estimates amount to\[
\alpha_{p}\ge\max\left\{ \sum_{k=1}^{m}\beta_{j_{k}}^{(k)}:\sum_{k=1}^{m}(N-j_{k})=N-p\right\} .\]
This result was proved by Johnson \cite{Johnson}. for $m=2$.

These considerations can be extended to any situation where the vectors
$\beta^{(k)}$ are fully specified, thus answering a question raised
in \cite{Ful-BAMS}. Finding a minimal set of inequalities may require
some work. We illustrate this in case $\alpha_{1}$ and $\alpha_{3}$
are specified. For simplicity, assume that $m=2$, and set $\beta=\beta^{(1)}$,
$\gamma=\beta^{(2)}$. In this case\[
\alpha^{\min}=(\alpha_{1},\alpha_{3},\alpha_{3},\underbrace{-\infty,\dots,-\infty}_{N-3}),\quad\alpha^{\max}=(\alpha_{1},\alpha_{1},\underbrace{\alpha_{3},\dots,\alpha_{3}}_{N-2}).\]
 For the upper estimates, we need to consider Horn triples $(I,J,K)$
with $I\subset[3]$. When $|I|=1$, we obtain\[
\alpha_{1}\le\beta_{1}+\gamma_{1},\quad\alpha_{3}\le\min\{\beta_{1}+\gamma_{3},\beta_{2}+\gamma_{2},\beta_{3}+\gamma_{1}),\]
when $|I|=3$ we obtain the inequality \[
\alpha_{1}+2\alpha_{3}\le\sum_{j=1}^{3}(\beta_{j}+\gamma_{j}),\]
and when $|I|=2$ we have $I=\{1,2\}$ which yields\begin{equation}
\alpha_{1}+\alpha_{3}\le\beta_{1}+\beta_{2}+\gamma_{1}+\gamma_{2},\label{eq(53)}\end{equation}
$I=\{1,3\}$ which yields\[
\alpha_{1}+\alpha_{3}\le\min\{\beta_{1}+\beta_{3}+\gamma_{1}+\gamma_{2},\beta_{1}+\beta_{2}+\gamma_{1}+\gamma_{3}\},\]
and finally $I=\{2,3\}$ which yields\[
2\alpha_{3}\le\begin{cases}
\beta_{2}+\beta_{3}+\gamma_{1}+\gamma_{2}\\
\beta_{1}+\beta_{2}+\gamma_{2}+\gamma_{3}\\
\beta_{1}+\beta_{3}+\gamma_{1}+\gamma_{3}\end{cases}.\]
As before, these last three estimates and (\ref{eq(53)}) can be discarded
in order to obtain a minimal set of upper estimates. A minimal set
of lower estimates is harder to obtain. The inequalities for which
$[N]\setminus I\subset[2]$ or $[N]\setminus I\subset[2]^{c}$ involve
only one of the specified entries, and will be satisfied provided
that\[
\alpha_{1}\ge\max\{\beta_{j}+\gamma_{k}:j+k=N+1\},\quad\alpha_{3}\ge\max\{\beta_{j}+\gamma_{k}=N+3\}.\]
In addition, there will be lower bounds for $\alpha_{1}+(r-1)\alpha_{3}$
and $2\alpha_{1}+(r-2)\alpha_{3}$ when $3\le r\le N$. These are
provided by the reverse Horn inequalities with $[N]\setminus I=[r+1]\setminus\{2\}$
and $[N]\setminus I=[r]$, respectively. The fact that one does not
need to consider more general sets $I$ is deduced easily from Corollary
\ref{corolar:from-Tbar-toT}. Indeed, assume that we write a reverse
Horn inequality such that $\sum_{i\notin I}\alpha_{i}^{\max}=\alpha_{1}+(r-1)\alpha_{3}$.
Replacing $I$ by $I'$ such that $[N]\setminus I'=[r+1]\setminus\{2\}$
we have $\sum_{i\notin I}\alpha_{i}^{\max}=\sum_{i\notin I'}\alpha_{i}^{\max}$,
and the $(m+1)$-tuple $(I',J^{(1)},\dots,J^{(m)})$ belongs to $\overline{T}_{N-r}^{N}$.
Corollary \ref{corolar:from-Tbar-toT} yields then $(I'',J'^{(1)},\dots,J'^{(m)})$
in $T_{N-r}^{N}$ such that $I''\ge I'$ and $J'^{(k)}\ge J^{(k)}$.
The reverse Horn inequality for this new tuple will be stronger than
the one given by the original $(I,J^{(1)},\dots,J^{(m)})$. One reasons
in a similar fashion when $\sum_{i\notin I}\alpha_{i}^{\max}=2\alpha_{1}+(r-2)\alpha_{3}$.
Some of the inequalities with $[N]\setminus I=[r+1]\setminus\{2\}$
might not belong to $\dot{T}_{N-r}^{N}$ and can therefore be discarded.

The results of \cite{buch} can also be deduced from Proposition \ref{pro:partial-matrixCase}.
Namely, one considers the vectors $\beta^{(1)},\dots,\beta^{(m)}$
to be fully specified, while the only specified entries of $\alpha$
are $\alpha_{\rho+1}=\alpha_{\rho+2}=\cdots=\alpha_{N}=0$. This amounts
to looking at Hermitian matrices $B^{(1)},\dots,B^{(m)}$ with specified
eigenvalues, whose sum is positive and has rank at most $\rho$. The
conditions found in \cite{buch} form a minimal system of inequalities
equivalent to the ones provided by Proposition \ref{pro:partial-matrixCase}.
The upper estimates which must be kept are those corresponding to
$(I,J^{(1)},\dots,J^{(m)})\in\dot{T}_{r}^{N}$ such that $I=[N]\setminus[N-r]$,
as can be seen by an argument using Corollary \ref{corolar:from-Tbar-toT}.
The lower estimates correspond with $I\supset[\rho]$, and one only
needs to consider those tuples in $\dot{T}$.

We consider now compact selfadjoint operators with partially specified
eigenvalues. Let $S$ be a set of nonzero integers. We assume, for
simplicity, that $S\cap\mathbb{N}$ and $S\cap(-\mathbb{N})$ are
infinite. A function $n\mapsto\alpha_{n}\in\mathbb{R}$ defined on
$S$ will be called a partially specified element of $c_{\downarrow0\uparrow}$
if there exists $\beta\in c_{\downarrow0\uparrow}$ such that $\beta_{n}=\alpha_{n}$
for all $n\in S$. Let $\alpha$ be such a sequence, and assume $S\cap\mathbb{N}=\{ n_{1}<n_{2}<\cdots\}$,
$S\cap(-\mathbb{N})=\{ m_{1}>m_{2}>\dots\}$. We introduce two sequences
$(\alpha_{\pm n}^{\max})_{n=1}^{\infty}$ and $(\alpha_{\pm n}^{\min})_{n=1}^{\infty}$
as follows:\[
\alpha_{n}^{\max}=\begin{cases}
\infty & \text{if }0<n<n_{1}\\
\alpha_{n_{j}} & \text{if }n_{j}\le n<n_{j+1}\\
\alpha_{m_{1}} & \text{if }m_{1}\le n<0\\
\alpha_{m_{j+1}} & \text{if }m_{j+1}\le n<m_{j}\end{cases}\quad\alpha_{n}^{\min}=\begin{cases}
\alpha_{n_{1}} & \text{if }0<n\le n_{1}\\
\alpha_{n_{j+1}} & \text{if }n_{j}<n\le n_{j+1}\\
-\infty & \text{if }m_{1}<n<0\\
\alpha_{m_{j}} & \text{if }m_{j+1}<n\le m_{j}\end{cases}.\]
The elements $\beta\in c_{\downarrow0\uparrow}$ which extend $\alpha$
are precisely those satisfying the inequalities $\alpha^{\min}\le\beta\le\alpha^{\max}$.
We will use the notation $\beta\supset\alpha$ to indicate that the
sequence $\beta$ extends the partially defined $\alpha$. 

In the following result, as in its finte-dimensional counterpart,
an extended Horn inequality in which an infinite term appears is to
be automatically satisfied. In order to see what the difficulty is
in extending the proof of Proposition \ref{pro:partial-matrixCase},
observe that there are infinitely many extended Horn inequalities.
Thus, when replacing an infinite entry by a finite constant, we impose
infinitely many conditions on that constant. We need to show that
these constraints can be met simultaneously.

\begin{prop}
\label{prop:Buch-extended} Let $\alpha,\beta^{(1)},\beta^{(2)},\dots,\beta^{(m)}$
be partially specified elements of $c_{\downarrow0\uparrow}$. The
following conditions are equivalent:
\begin{enumerate}
\item There exist compact selfadjoint operators $A,B^{(1)},\dots,B^{(k)}$
such that $A=\sum_{k=1}^{m}B^{(k)}$, $\Lambda_{0}(A)\supset\alpha$,
and $\Lambda_{0}(B^{(k)})\supset\beta^{(k)}$ for $k=1,2,\dots,m$. 
\item Both $(\alpha^{\min},\beta^{(1)\max},\dots,\beta^{(m)\max})$ and
$(\overline{\alpha^{\max}},\overline{\beta^{(1)\min}},\dots,\overline{\beta^{(m)\min}})$
satisfy all the Horn inequalities.
\end{enumerate}
\end{prop}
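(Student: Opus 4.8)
The plan is to mimic the proof of Proposition \ref{pro:partial-matrixCase}, using Theorem \ref{Main-result} in place of Proposition \ref{prop:interpolation-finiteN}, but with extra care to handle the fact that there are infinitely many extended Horn inequalities and hence infinitely many constraints on each constant we introduce. The implication $(1)\Rightarrow(2)$ is immediate: if such operators exist, then $\alpha=\Lambda_0(A)$ extends $\alpha^{\min}\le\alpha\le\alpha^{\max}$ and similarly for the $\beta^{(k)}$, and by Proposition \ref{Prop:infinite-horn} the tuples $(\Lambda_0(A),\Lambda_0(B^{(1)}),\dots)$ and $(\overline{\Lambda_0(A)},\dots)$ satisfy all the extended Horn inequalities; replacing $\Lambda_0(A)$ by the smaller $\alpha^{\min}$ in the left-hand side and the $\Lambda_0(B^{(k)})$ by the larger $\beta^{(k)\max}$ in the right-hand side (and dually for the reverse inequalities) only weakens them, so (2) holds.

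For the converse, the goal is to produce sequences $\alpha',\alpha'',\beta'^{(k)},\beta''^{(k)}\in c_{\downarrow0\uparrow}$ with $\alpha^{\min}\le\alpha'$, $\alpha''\le\alpha^{\max}$ (so any sequence between $\alpha'$ and $\alpha''$ extends $\alpha$), and likewise for the $\beta^{(k)}$, such that $(\alpha',\beta'^{(1)},\dots)$ and $(\overline{\alpha''},\overline{\beta''^{(1)}},\dots)$ satisfy all the extended Horn inequalities; then Theorem \ref{Main-result} finishes the job. The point is that $\alpha^{\min}$ itself need not lie in $c_{\downarrow0\uparrow}$: it has $-\infty$ entries in the ``gap'' between the last specified positive index and the negative indices. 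So I would replace those $-\infty$ entries of $\alpha^{\min}$, and symmetrically the $+\infty$ entries of each $\beta^{(k)\max}$, by finite values chosen small (resp.\ large) enough. Since only finitely many indices near $0$ carry infinite entries, one can replace the block of $-\infty$'s in $\alpha^{\min}$ by a single constant $-C$ repeated finitely many times (and similarly $+C$ for the $\beta^{(k)\max}$), keeping the sequences decreasing and with limit zero.

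The main obstacle is exactly the one flagged in the paragraph before the proposition: there are infinitely many extended Horn inequalities, and each one that involves a modified entry imposes a lower bound on $C$. I would argue that in fact only finitely many of them are \emph{active}. The key is Proposition \ref{Propo:facts-about-Tnr}(3) (as used in the proof of Theorem \ref{Main-result}): for a tuple in $T_r^{(m+1)n}(m+1)$, writing $p=|I\cap[n]|$, $q_k=|I\setminus[mn]|$, one has $p+q\le r$. When we evaluate an extended Horn inequality for $\alpha'$ and the $\beta'^{(k)}$, only finitely many of the $r$ indices $I(\ell)$ can hit the (finite, fixed) block of modified entries of $\alpha^{\min}$, and likewise for the $\beta^{(k)\max}$; the remaining terms are all specified entries of $\alpha$ or $\beta^{(k)}$, whose contribution is controlled by the hypothesis (2) via the inequality already known for $\alpha^{\min},\beta^{(k)\max}$ with $-\infty$ interpreted as $-\infty$. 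Concretely, any extended Horn inequality either does not involve a modified entry at all --- in which case it is implied by (2) --- or it involves at least one modified entry of $\alpha^{\min}$ on the left, and then the left side is bounded above by $-C$ plus a sum of at most finitely many genuine entries of $\alpha$ (bounded by $\|\alpha\|$ times a number depending only on how many modified slots there are), while the right side is bounded below by a fixed finite quantity; thus for $C$ large the inequality holds. The same reasoning, applied to $\overline{\alpha''}$ and the $\overline{\beta''^{(k)}}$ (using the dual/reverse form and that $\overline{\,\cdot\,}$ swaps the roles), disposes of the reverse inequalities. Having chosen $C$ large enough for both families simultaneously, Theorem \ref{Main-result} yields operators $A=\sum_k B^{(k)}$ with $\Lambda_0(A)$ between $\alpha'$ and $\alpha''$ and each $\Lambda_0(B^{(k)})$ between $\beta'^{(k)}$ and $\beta''^{(k)}$, hence $\Lambda_0(A)\supset\alpha$ and $\Lambda_0(B^{(k)})\supset\beta^{(k)}$, as required.
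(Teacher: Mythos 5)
Your proof of $(1)\Rightarrow(2)$ matches the paper. Your strategy for the converse is also the paper's: produce sequences $\alpha',\alpha'',\beta'^{(k)},\beta''^{(k)}$ in $c_{\downarrow0\uparrow}$ that squeeze between the min/max envelopes, satisfy the extended Horn inequalities (direct for the primed family, reversed for the double-primed family), and then invoke Theorem \ref{Main-result}. The difficulty is also correctly identified: infinitely many extended Horn inequalities each impose a constraint on the constants replacing the infinite entries.

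The gap is in the step where you dismiss this difficulty. You assert that any extended Horn inequality involving a modified entry has its left-hand side ``bounded above by $-C$ plus a sum of at most finitely many genuine entries of $\alpha$ (bounded by $\|\alpha\|$ times a number depending only on how many modified slots there are)'' and its right-hand side ``bounded below by a fixed finite quantity.'' This is not true. An extended Horn inequality built from a tuple in $T_r^N(m+1)$ has $r$ terms on the left and $mr$ terms on the right, and $r$ is unbounded; only the \emph{modified} slots are finitely many (at most $u$ on the left, at most $\sum_k v_k$ on the right), while the genuine terms number $r-a$ and $mr-\sum b_k$ respectively, and their sums are in general unbounded if the sequences are not summable. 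So the constraint $C\ge G_\alpha - G_\beta$ (where $G_\alpha,G_\beta$ are the genuine sub-sums) is over an infinite family, and you have given no reason why $\sup(G_\alpha - G_\beta)$ is finite. This is exactly the issue the paper flags in the paragraph before the proposition, and you have not actually resolved it.

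The paper's resolution is both necessary and not present in your sketch: one replaces a single $-\infty$ entry of $\alpha^{\min}$ at a time (keeping all remaining infinite entries infinite, so the only new constraints come from inequalities that hit precisely that one slot), and then uses Corollary \ref{corolar:from-Tbar-toT} to peel off that single term from the tuple: since $(\{1,\dots,r-1\},\dots,\{1,\dots,r-1\})\in T_{r-1}^r(m+1)$, Proposition \ref{Propo:facts-about-Tnr}(1) and the corollary yield a tuple in $T_{r-1}^{N}(m+1)$ dominating the truncation, whose associated inequality is already covered by the hypothesis. Comparing term-by-term produces the uniform bound $\tau\le\sum_{k=1}^m\beta_{-1}^{(k)\max}$. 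Then one passes to the $\beta'^{(k)}$ by the same argument via Corollary \ref{cor--permutation}. Without this peeling mechanism your claim that ``only finitely many constraints are active'' is unjustified, and the simultaneous replacement by $\pm C$ is not known to terminate in a valid choice of $C$.
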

\begin{proof}
The implication $(1)\Rightarrow(2)$ follows immediately from the
corresponding implication in Theorem \ref{Main-result}. Theorem \ref{Main-result}
also shows that, to prove the opposite implication, it will suffice
to find sequences $\alpha',\alpha'',\beta'^{(k)},\beta''^{(k)}\in c_{\downarrow0\uparrow}$
such that the $(m+1)$-tuples $(\alpha^{\prime},\beta'^{(1)},\dots,\beta'^{(k)})$
and $(\overline{\alpha^{\prime\prime}},\overline{\beta''^{(1)}},\dots,\overline{\beta''^{(k)}})$
satisfy all the Horn inequalities, $\alpha'\ge\alpha^{\min}$, $\alpha''\le\alpha^{\max}$,
$\beta'^{(k)}\le\beta^{(k)\max}$, and $\beta''^{(k)}\ge\beta^{(k)\min}$.
By symmetry, it will suffice to construct the sequences $\alpha'$
and $\beta^{\prime(k)}$. We start by constructing $\alpha'\ge\alpha^{\min}$
such that $(\alpha',\beta^{(1)\max},\dots,\beta^{(m)\max})$ satisfy
all the extended Horn inequalities. The vectors $\alpha^{\min}$ and
$\beta^{(k)\max}$ have finitely many infinite entries. Assume for
definiteness that $\alpha_{i}^{\min}=-\infty$ for $i=-1,-2,\dots,-u$,
$\beta_{i}^{\max}=+\infty$ for $i=1,2,\dots,v_{k}$, and all the
other entries of these sequences are finite. We will construct inductively
sequences $\alpha^{(0)}=\alpha,\alpha^{(1)},\alpha^{(2)},\dots,\alpha^{(v)}=\alpha'$
such that each $\alpha^{(i+1)}$ is obtained from $\alpha^{(i)}$
by replacing one $-\infty$ entry by a large negative number, and
$(\alpha^{(i)},\beta^{(1)},\dots,\beta^{(m)})$ satisfy all the extended
Horn inequalities. It will suffice to show that it is possible to
construct $\alpha^{(1)}$. Denote by $\alpha^{(1)}$ the sequence
defined by\[
\alpha_{i}^{(1)}=\begin{cases}
\alpha_{i}^{\min} & \text{if }i\ne-u\\
\tau & \text{if }i=-u\end{cases}\]
 for $\tau\le\alpha_{-u-1}^{\min}$. We verify that $(\alpha^{(1)},\beta^{(1)\max},\dots,\beta^{(m)\max})$
satisfies all the Horn inequalities if $\tau$ is sufficiently small.
The relevant inequalities which are not already covered by the hypothesis
of the propositions are\[
\sum_{\ell=1}^{r-q}\alpha_{I(\ell)}^{(1)}+\sum_{\ell=r-q+1}^{r}\alpha(\tau)_{I(\ell)-N-1}\le\sum_{k=1}^{m}\left(\sum_{\ell=1}^{r-q_{k}}\beta_{J^{(k)}(\ell)}^{(k)\max}+\sum_{\ell=r-q_{k}+1}^{r}\beta_{J^{(k)}(\ell)-N-1}^{(k)\max}\right),\]
where $(I,J^{(1)},\dots,J^{(k)})\in T_{r}^{N}(m+1),$ $q=q_{1}+\cdots+q_{m}<r$,
$I(r)-N-1=-u$, and $J^{(k)}(1)>v_{k}$ for $k=1,2,\dots,m$. The
$(m+1)$-tuple $(J,J,\dots,J)$, with $J=\{1,2,\dots,r-1\}$, belongs
to $T_{r-1}^{r}(m+1)$; this is an easy consequence of Corollary \ref{cor:Horn-indices-are-eigenvalues}
since $\pi(J)=0$. Therefore $(I\circ J,J^{(1)}\circ J,\dots,J^{(m)}\circ J)\in\overline{T}_{r-1}^{N}(m+1)$
by Proposition \ref{Propo:facts-about-Tnr}(1). Corollary \ref{corolar:from-Tbar-toT}
provides $(I',J'^{(1)},\dots,J'^{(k)})\in T_{r-1}^{N}(m+1)$ such
that $I'\le I\circ J$ and $J'^{(k)}\ge J^{(k)}\circ J$ for $k=1,2,\dots,m$.
The hypothesis implies then the inequality\[
\sum_{\ell=1}^{r-1-q'}\alpha_{I'(\ell)}^{\min}+\sum_{\ell=r-q'}^{r-1}\alpha_{I'(\ell)-N-1}^{\min}\le\sum_{k=1}^{m}\left(\sum_{\ell=1}^{r-1-q'_{k}}\beta_{J'^{(k)}(\ell)}^{(k)\max}+\sum_{\ell=r-q'_{k}}^{r-1}\beta_{J'^{(k)}(\ell)-N-1}^{(k)\max}\right),\]
and the relations between $I'$ and $I\circ J$, along with the fact
that $\alpha$ is decreasing, imply\[
\sum_{\ell=1}^{r-1-q'}\alpha_{I(\ell)}^{\min}+\sum_{\ell=r-q'}^{r-1}\alpha_{I(\ell)-N-1}^{\min}\le\sum_{k=1}^{m}\left(\sum_{\ell=1}^{r-1-q'_{k}}\beta_{J^{(k)}(\ell)}^{(k)\max}+\sum_{\ell=r-q'_{k}}^{r-1}\beta_{J^{(k)}(\ell)-N-1}^{(k)\max}\right).\]
With the choice $q'_{k_{0}}=q_{k_{0}}-1$ and $q'_{k}=q_{k}$ for
$k\ne k_{0}$, we obtain\[
\sum_{\ell=1}^{r-q}\alpha_{I(\ell)}^{\min}+\sum_{\ell=r-q+1}^{r-1}\alpha_{I(\ell)-N-1}^{\min}\le\sum_{k=1}^{m}\left(\sum_{\ell=1}^{r-q{}_{k}}\beta_{J^{(k)}(\ell)}^{(k)\max}+\sum_{\ell=r-q{}_{k}+1}^{r-1}\beta_{J^{(k)}(\ell)-N-1}^{(k)\max}\right)\]
after also replacing some negative terms by positive ones in the right-hand
side of the inequality. The desired inequality for $\alpha^{(1)}$
will then follow provided that\[
\tau\le\sum_{k=1}^{m}\beta_{J^{(k)}(r)-N-1}^{(k)\max},\]
and for this it will suffice that\[
\tau\le\sum_{k=1}^{m}\beta_{-1}^{(k)\max}.\]

The construction of $\beta^{\prime(k)}$ is then done by induction
on $k$ so that \[
(\alpha',\beta'^{(1)},\dots,\beta'^{(k)},\beta^{(k+1)\max},\dots,\beta^{(m)\max})\]
 satisfies all the extended Horn inequalities. For instance, we know
from Corollary \ref{cor--permutation} that $(\overline{\beta^{(1)\max}},\overline{\alpha'},\beta^{(2)\max},\dots,\beta^{(m)\max})$
satisfies all the extended Horn inequalities, and the preceding construction
(with $\overline{\beta^{(1)\max}}$ in place of $\alpha^{\min}$)
yields the desired sequence $\beta^{\prime(1)}$. This concludes the
construction of the sequences $\beta^{\prime(k)}$ and the proof of
the proposition.
\end{proof}

\section{Cases of Equality, Positive Operators}

In finite dimensions it is known \cite{horn} that, when one of the
Horn inequalities is an equality, there is a subspace which reduces
all the matrices involved. This result extends easily to compact selfadjoint
operators.

\begin{prop}
Let $A,B^{(1)},\dots,B^{(k)}$ be compact selfadjoint operators such
that $A=\sum_{k=1}^{m}B^{(k)}$, and set $\alpha=\Lambda_{0}(A)$
and $\beta^{(k)}=\Lambda_{0}(B^{(k)})$. Assume that the $(m+1)$-tuple
$(I,J^{(1)},\dots,J^{(m)})\in T_{r}^{N}(m+1)$, $q_{1},\dots,q_{m}$
are such that $q=\sum_{k=1}^{m}q_{k}\le r$, and \[
\sum_{\ell=1}^{r-q}\alpha_{I(\ell)}+\sum_{\ell=r-q+1}^{r}\alpha_{I(\ell)-N-1}=\sum_{k=1}^{m}\left(\sum_{\ell=1}^{r-q_{k}}\beta_{J^{(k)}(\ell)}^{(k)}+\sum_{\ell=r-q_{k}+1}^{r}\beta_{J^{(k)}(\ell)-N-1}^{(k)}\right).\]
Then there is a space $\mathfrak{M}$ of dimension $N$ which reduces
the operators $A$ and $B^{(k)}$, \[
\Lambda(A|\mathfrak{M})=(\alpha_{I(1)},\dots,\alpha_{I(r-q)},\alpha_{I(r-q+1)-N-1},\dots,\alpha_{I(r)-N-1}),\]
and similarly\[
\Lambda(B^{(k)}|\mathfrak{M})=(\beta_{J^{(k)}(1)}^{(k)},\dots,\beta_{J^{(k)}(r-q_{k})}^{(k)},\beta_{J^{(k)}(r-q_{k}+1)-N-1}^{(k)},\dots,\beta_{J^{(k)}(r)-N-1}^{(k)})\]
for $k=1,2,\dots,r$.
\end{prop}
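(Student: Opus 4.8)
The plan is to reduce the infinite-dimensional equality case to the known finite-dimensional one by compressing the operators to a carefully chosen finite-dimensional subspace, exactly as in the proof of Proposition~\ref{Prop:infinite-horn}, and then to show that the compression in fact realizes the equality with the right dimensions, so that the finite-dimensional result applies. First I would let $P$ be the orthogonal projection whose range contains all eigenvectors of $A$ and of each $B^{(k)}$ corresponding to the eigenvalues $\alpha_{\pm n}$ and $\beta^{(k)}_{\pm n}$ for $n\le N$; enlarging $P$ if necessary, assume its rank is $N+M$ for some $M\ge 0$, and also that its range contains enough zero eigenvectors to accomodate the padding. Write $\widetilde A=PAP|P\mathfrak{H}$ and $\widetilde B^{(k)}=PB^{(k)}P|P\mathfrak{H}$; these are $(N+M)\times(N+M)$ Hermitian matrices with $\widetilde A=\sum_{k=1}^m\widetilde B^{(k)}$, and by the Lemma preceding Proposition~\ref{Prop:infinite-horn} we have $\Lambda_0(\widetilde A)_{\pm n}=\alpha_{\pm n}$, $\Lambda_0(\widetilde B^{(k)})_{\pm n}=\beta^{(k)}_{\pm n}$ for $n\le N$.

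Next I would transport the extended Horn inequality to the matrix level: using Lemma~\ref{lemma:inserting-gaps}, construct $(I',J'^{(1)},\dots,J'^{(m)})\in T_r^{N+M}(m+1)$ by inserting the gap of size $M$ in the last $q$ (resp.\ $q_k$) positions. As computed in the proof of Proposition~\ref{Prop:infinite-horn}, the ordinary Horn inequality for this $(m+1)$-tuple applied to the matrices $\widetilde A$, $\widetilde B^{(k)}$ is \emph{exactly} the extended Horn inequality for $\alpha$, $\beta^{(k)}$ and the original data; the hypothesis says this holds with equality. Thus $\Lambda(\widetilde A)$ and $\Lambda(\widetilde B^{(k)})$ satisfy a Horn equality indexed by $(I',J'^{(1)},\dots,J'^{(m)})\in T_r^{N+M}(m+1)$. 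Now invoke the finite-dimensional equality result quoted from \cite{horn} at the start of Section~6: there is an $r$-dimensional subspace $\mathfrak{M}_0\subset P\mathfrak{H}$ reducing $\widetilde A$ and all $\widetilde B^{(k)}$, on which the eigenvalues are $\{\Lambda(\widetilde A)_{I'(\ell)}\}_\ell$ and $\{\Lambda(\widetilde B^{(k)})_{J'^{(k)}(\ell)}\}_\ell$. Because $\mathfrak{M}_0$ reduces $PAP$ and lies in $P\mathfrak{H}$, and $A h = PAPh$ for $h$ in the range of $P$ only up to the off-diagonal part — here one uses that $\mathfrak{M}_0$ is spanned by eigenvectors, so it actually reduces $A$ itself (an eigenvector of $PAP$ lying in a reducing subspace contained in the span of eigenvectors of $A$ is an eigenvector of $A$); similarly for each $B^{(k)}$.

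The bookkeeping step is to identify the eigenvalues correctly. Translating the index positions $I'(\ell)$ back through Lemma~\ref{lemma:inserting-gaps} and the relation $\Lambda(\widetilde A)_n$ versus $\Lambda_0(\widetilde A)_n$ (the last $q$ positions of $I'$ correspond, after subtracting $N+M+1$, to the negative indices $I(\ell)-N-1$ of $\Lambda_0$, using $M$ zero eigenvectors as a buffer so that no spurious nonzero eigenvalue intrudes), one finds $\Lambda(A|\mathfrak{M}_0)$ is precisely the claimed list $(\alpha_{I(1)},\dots,\alpha_{I(r-q)},\alpha_{I(r-q+1)-N-1},\dots,\alpha_{I(r)-N-1})$, and likewise for the $B^{(k)}$. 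Note $\dim\mathfrak{M}_0=r$, not $N$; to get a reducing subspace of the stated dimension $N$ one enlarges $\mathfrak{M}_0$ by adjoining, within $P\mathfrak{H}$, the eigenvectors realizing the remaining specified eigenvalues, or — cleaner — one simply restates the conclusion with $\dim\mathfrak{M}=r$, which is what the eigenvalue lists have length. I expect the main obstacle to be precisely this last reconciliation: making sure that the padding by zeros and the gap insertion are arranged so that the matrix eigenvalue multiset restricted to $\mathfrak{M}_0$ matches the prescribed mix of "positive-side" and "negative-side" entries of $\Lambda_0$, with no contamination from the artificially inserted zeros; once the buffer of zero eigenvectors is taken large enough this is a finite, routine check.
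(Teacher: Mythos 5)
Your strategy of compressing to a single finite-rank projection $P$ and then transferring the reducing subspace back to $\mathfrak{H}$ has a gap at exactly the step the paper handles by an approximation argument. You find $\mathfrak{M}_0 \subset P\mathfrak{H}$ reducing the compressed matrices $\widetilde{A} = PAP|P\mathfrak{H}$ and $\widetilde{B}^{(k)}$, and then assert that $\mathfrak{M}_0$ already reduces $A$ and the $B^{(k)}$ themselves, justified by the parenthetical that ``an eigenvector of $PAP$ lying in a reducing subspace contained in the span of eigenvectors of $A$ is an eigenvector of $A$.'' But the range of $P$ is \emph{not} contained in the span of eigenvectors of $A$: by construction it also contains eigenvectors of each $B^{(k)}$ (plus a buffer of zero vectors), and those are not eigenvectors of $A$ in general. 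So the hypothesis of your parenthetical never holds. More fundamentally, $\mathfrak{M}_0$ reducing $PAP|P\mathfrak{H}$ gives only $PAP\mathfrak{M}_0 \subset \mathfrak{M}_0$, which does not yield $A\mathfrak{M}_0 \subset \mathfrak{M}_0$ unless $P\mathfrak{H}$ itself reduces $A$ --- and it cannot simultaneously reduce $A$ and all the $B^{(k)}$. The eigenvector-transfer argument can be salvaged for eigenvalues of $A$ that are strictly separated from the rest of the spectrum of $PAP|P\mathfrak{H}$ (there the spectral subspaces of $PAP$ and of $A$ coincide, both being the eigenspaces you deliberately put inside $P\mathfrak{H}$), but it fails precisely when eigenvalues are repeated, which is unavoidable at $0$ for a compact operator; the ``padding by zeros'' you invoke to fix the indexing is exactly what destroys the separation you would need.

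The paper's own proof sidesteps all of this by taking a limit: it builds an increasing sequence $\mathfrak{H}_n$ with dense union, applies the finite-dimensional equality case of Horn's result to each compression $P_{\mathfrak{H}_n}A|\mathfrak{H}_n$ to obtain reducing subspaces $\mathfrak{M}_n$ with the prescribed eigenvalue data, observes that when the relevant eigenvalues are nonzero the $\mathfrak{M}_n$ eventually lie in a fixed finite-dimensional space, and then takes $\mathfrak{M}$ to be a limit point; since $P_{\mathfrak{H}_n}AP_{\mathfrak{H}_n}\to A$ in norm by compactness, the limit subspace reduces the original operators (with the zero-eigenvalue case flagged as needing extra care). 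Your argument would need to be replaced by such a limiting procedure; as written, the passage from the compression back to the original operators does not go through. Your observation that $\dim\mathfrak{M}$ should be $r$ rather than $N$ is, however, a correct reading of a typo in the statement.
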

\begin{proof}
Assume that the operators act on the separable space $\mathfrak{H}$.
Construct spaces $\mathfrak{H}_{n}$ such that $\mathfrak{\mathfrak{H}}_{n}\subset\mathfrak{H}_{n+1}$,
$\bigcup\mathfrak{H}_{n}$ is dense in $\mathfrak{H}$, and $\mathfrak{H}_{n}$
contains all eigenvectors for $A$ and $B^{(k)}$ corresponding with
the eigenvalues $\alpha_{I(\ell)},\alpha_{I(\ell)-N-1},\beta_{J^{(k)}(\ell)}^{(k)},\beta_{J^{(k)}(\ell)-N-1}^{(k)}$
for $\ell=1,2,\dots,r$, provided that $n\ge2r(m+1)$. The finite-dimensional
result implies the existence of spaces $\mathfrak{M}_{n}$, reducing for
$P_{\mathfrak{H}_{n}}A|\mathfrak{H}_{n}$ and $P_{\mathfrak{H}_{n}}B^{(k)}|\mathfrak{H}_{n}$,
such that $\Lambda(P_{\mathfrak{H}_{n}}A|\mathfrak{M}_{n})$ and $\Lambda(P_{\mathfrak{H}_{n}}B^{(k)}|\mathfrak{M}_{n})$
have the desired eigenvalues for $n$ large. If all of these eigenvalues
are different from zero, it will follow that all the spaces $\mathfrak{M}_{n}$
are contained in a fixed finite-dimensional space, and then we can
choose $\mathfrak{M}$ to be any limit point of this sequence. The case
in which some of the eigenvalues involved are zero requires minor
modifications which we leave to the interested reader.
\end{proof}
Corollary \ref{Cor:Horn-conj} specializes as follows in the case
of positive operators.

\begin{thm}
\label{thm:positive-horn}Let $\alpha,\beta^{(1)},\dots,\beta^{(m)}$
be decreasing sequences with limit zero. the following conditions
are equivalent:
\begin{enumerate}
\item There exist positive compact operators $A,B^{(1)},\dots,B^{(k)}$
such that $A=\sum_{k=1}^{m}B^{(k)}$, $\Lambda_{+}(A)=\alpha$, and
$\Lambda_{+}(B^{(k)})=\beta^{(k)}$ for $k=1,2,\dots,m$. 
\item For every $r<N$, every $q_{1},\dots,q_{m}\ge0$ such that $q=q_{1}+\cdots+q_{m}\le r$,
and every $(I,J^{(1)},\dots,J^{(m)})\in T_{r}^{N}(m+1)$, we have
the Horn inequality\[
\sum_{i\in I}\alpha_{i}\le\sum_{k=1}^{m}\sum_{j\in J^{(k)}}\beta_{j}^{(k)}\]
and the extended reverse Horn inequality\[
\sum_{i\in I_{q}^{c}}\alpha_{i}\ge\sum_{k=1}^{m}\sum_{j\in J_{q_{k}}^{(k)c}}\beta_{j}^{(k)}.\]

\end{enumerate}
\end{thm}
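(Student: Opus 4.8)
The plan is to reduce Theorem~\ref{thm:positive-horn} to Theorem~\ref{Cor:Horn-conj}, regarding the positive data as selfadjoint data with vanishing negative part. Given decreasing sequences $\alpha,\beta^{(1)},\dots,\beta^{(m)}$ with limit zero, let $\widetilde\alpha,\widetilde\beta^{(1)},\dots,\widetilde\beta^{(m)}\in c_{\downarrow0\uparrow}$ be the sequences whose positive parts are $\alpha,\beta^{(1)},\dots,\beta^{(m)}$ and whose negative parts vanish identically. A compact selfadjoint operator $A$ with $\Lambda_{0}(A)=\widetilde\alpha$ has no negative eigenvalue, hence is positive, and then $\Lambda_{+}(A)=\alpha$; conversely a positive compact $A$ with $\Lambda_{+}(A)=\alpha$ satisfies $\Lambda_{0}(A)=\widetilde\alpha$. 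Thus condition (1) of Theorem~\ref{thm:positive-horn} is equivalent to condition (1) of Theorem~\ref{Cor:Horn-conj} applied to $(\widetilde\alpha,\widetilde\beta^{(1)},\dots,\widetilde\beta^{(m)})$, and it remains to show that condition (2) of Theorem~\ref{Cor:Horn-conj} for this tuple --- namely that both $(\widetilde\alpha,\widetilde\beta^{(1)},\dots,\widetilde\beta^{(m)})$ and $(\overline{\widetilde\alpha},\overline{\widetilde\beta^{(1)}},\dots,\overline{\widetilde\beta^{(m)}})$ satisfy all the extended Horn inequalities $(\ref{eq:gen-horn-ineq})$ --- is equivalent to condition (2) of Theorem~\ref{thm:positive-horn}.

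For the first tuple, the vanishing of the negative part makes every term $\widetilde\alpha_{I(\ell)-N-1}$ and $\widetilde\beta^{(k)}_{J^{(k)}(\ell)-N-1}$ in $(\ref{eq:gen-horn-ineq})$ equal to zero, so the inequality reduces to $\sum_{\ell=1}^{r-q}\alpha_{I(\ell)}\le\sum_{k=1}^{m}\sum_{\ell=1}^{r-q_{k}}\beta^{(k)}_{J^{(k)}(\ell)}$. When all $q_{k}=0$ this is precisely the Horn inequality of condition (2). When some $q_{k}>0$, I would show it already follows from the plain Horn inequalities: since $\pi([r-q])=0$, the $(m+1)$-tuple $([r-q],[r-q],\dots,[r-q])$ lies in $T_{r-q}^{r}(m+1)$ by Corollary~\ref{cor:Horn-indices-are-eigenvalues}, so Proposition~\ref{Propo:facts-about-Tnr}(1) gives $(I\circ[r-q],J^{(1)}\circ[r-q],\dots,J^{(m)}\circ[r-q])\in\overline{T}^{N}_{r-q}(m+1)$; Corollary~\ref{corolar:from-Tbar-toT} then produces $(I',J'^{(1)},\dots,J'^{(m)})\in T^{N}_{r-q}(m+1)$ with $I'\le I\circ[r-q]$ and $J'^{(k)}\ge J^{(k)}\circ[r-q]$, and since $\alpha$ and the $\beta^{(k)}$ are nonnegative and decreasing, the Horn inequality for that tuple, together with the bounds $q_{k}\le q$ (which only add nonnegative terms on the right), yields the displayed inequality. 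Hence the extended Horn inequalities for $(\widetilde\alpha,\widetilde\beta^{(1)},\dots,\widetilde\beta^{(m)})$ reduce exactly to the Horn inequalities for $\alpha,\beta^{(1)},\dots,\beta^{(m)}$.

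For the second tuple the positive part vanishes, so $\overline{\widetilde\alpha}_{I(\ell)}=0$ and $\overline{\widetilde\beta^{(k)}}_{J^{(k)}(\ell)}=0$, while $\overline{\widetilde\alpha}_{I(\ell)-N-1}=-\alpha_{N+1-I(\ell)}$ and $\overline{\widetilde\beta^{(k)}}_{J^{(k)}(\ell)-N-1}=-\beta^{(k)}_{N+1-J^{(k)}(\ell)}$; multiplying $(\ref{eq:gen-horn-ineq})$ by $-1$, it becomes $\sum_{\ell=r-q+1}^{r}\alpha_{N+1-I(\ell)}\ge\sum_{k=1}^{m}\sum_{\ell=r-q_{k}+1}^{r}\beta^{(k)}_{N+1-J^{(k)}(\ell)}$, i.e.\ the sum of $\alpha$ over the $q$ smallest elements of $I_{\text{sym}}$ dominates $\sum_{k}$ of the sum of $\beta^{(k)}$ over the $q_{k}$ smallest elements of $J^{(k)}_{\text{sym}}$ (symmetrizations relative to $N$). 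The step I expect to be the principal obstacle is to identify this family of inequalities, as $(I,J^{(1)},\dots,J^{(m)})$ ranges over all $T^{N}_{r}(m+1)$, all $N$, and all admissible $q_{1},\dots,q_{m}$, with the family of extended reverse Horn inequalities $\sum_{i\in I^{c}_{q}}\alpha_{i}\ge\sum_{k=1}^{m}\sum_{j\in J^{(k)c}_{q_{k}}}\beta^{(k)}_{j}$ in condition (2) of Theorem~\ref{thm:positive-horn}, which instead use the $q$ smallest elements of $\mathbb{N}\setminus I$ and the $q_{k}$ smallest of $\mathbb{N}\setminus J^{(k)}$. To match the two I would use Lemma~\ref{lemma:inserting-gaps} to move to Horn tuples in larger $T$'s whose index sets have the prescribed gaps at the bottom, and the symmetry of the collections $T^{N}_{r}(m+1)$ --- the finite-dimensional form of Corollary~\ref{cor--permutation}, together with the identity $\sum_{\ell}(I(\ell)-\ell)+\sum_{\ell}(I_{\text{sym}}(\ell)-\ell)=r(N-r)$ from Section~2 --- in combination with Proposition~\ref{Propo:facts-about-Tnr} and Corollary~\ref{corolar:from-Tbar-toT}, passing from a tuple of one family to a tuple of the other whose associated inequality is at least as strong, again comparing sums over pointwise-comparable index sets and using nonnegativity and monotonicity of the sequences. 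Once this equivalence is in hand, condition (2) of Theorem~\ref{Cor:Horn-conj} for $(\widetilde\alpha,\widetilde\beta^{(1)},\dots,\widetilde\beta^{(m)})$ is verbatim condition (2) of Theorem~\ref{thm:positive-horn}.
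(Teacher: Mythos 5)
Your reduction to Theorem~\ref{Cor:Horn-conj} via the "zero negative part" embedding $\alpha\mapsto\widetilde\alpha$ is exactly the route the paper takes, and your treatment of the \emph{direct} extended Horn inequalities is correct. (For the record, the paper discards them more quickly: apply Lemma~\ref{lemma:inserting-gaps} with the given $q,q_k$ and let $M\to\infty$ in the plain Horn inequality; the terms with shifted indices tend to zero. Your route through $\overline{T}$ and Corollary~\ref{corolar:from-Tbar-toT} also works, though it is lengthier.)

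The gap is in the reverse inequalities, and you correctly locate it yourself. After the computation with $\overline{\widetilde\alpha}$ you are left with the family of inequalities $\sum_{\ell>r-q}\alpha_{N+1-I(\ell)}\ge\sum_k\sum_{\ell>r-q_k}\beta^{(k)}_{N+1-J^{(k)}(\ell)}$, whose $\alpha$-indices are the $q$ smallest elements of $I_{\text{sym}}\subset[N]$, while the statement to be proved uses $I^{c}_{q}$, the $q$ smallest elements of $\mathbb{N}\setminus I$. These index sets are genuinely different for a fixed tuple (e.g.\ $I=\{1,2\}\subset[3]$ gives $\{2\}$ versus $\{3\}$, while $I=\{2\}\subset[3]$ gives $\{2\}$ versus $\{1\}$), and neither is pointwise dominated by the other in general, so a straight monotonicity argument will not carry one inequality to the other. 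Your sketch ("use Lemma~\ref{lemma:inserting-gaps}, Corollary~\ref{cor--permutation}, Proposition~\ref{Propo:facts-about-Tnr}, Corollary~\ref{corolar:from-Tbar-toT}, pass to a tuple whose inequality is at least as strong") lists plausible tools but does not describe a construction; note in particular that applying Lemma~\ref{lemma:inserting-gaps} with the same $q,q_k$ to a tuple leaves the $I_{\text{sym}}$-indexed inequality unchanged, so that lemma alone does not bridge the two families. What is actually needed, and what the paper points to with the phrase "the equivalence between conditions (2) and (3) in Theorem~\ref{Theo:Horn-conj-m}," is the fact (stated but not proved in Section~2, with the remark that (2) and (3) "are precisely the same") that the Horn inequalities indexed over sets $I$ and those indexed over the complements $[N]\setminus I_{\text{sym}}$ form the same collection of constraints. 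That re-indexing, together with the freedom to enlarge $N$, is the mechanism that converts the $I_{\text{sym}}$-based family into the $I^{c}_{q}$-based one. Until this identification is actually carried out, the equivalence "(A) $\Longleftrightarrow$ (B)" at the end of your argument is unsupported, and the proof of the theorem is incomplete.
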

The lack of balance in the reverse Horn inequalities comes from the
fact that all negative terms in $\Lambda_{0}(A)$ and $\Lambda_{0}(B^{(k)})$
are equal to zero. The proof of the proposition is straighforward
if we take into account the equivalence between conditions (2) and
(3) in Theorem \ref{Theo:Horn-conj-m}. Note that the extended Horn
inequalities need not be required in full generality -- the ordinary
Horn inequalities suffice in the case of positive operators. Indeed,
applying the Horn inequalities provided by Lemma \ref{lemma:inserting-gaps}
one obtains the extended Horn inequalities as $M\to\infty$. This
is not the case for the reverse inequalities. While this observation
does reduce the collection of inequalities to be verified, the smaller
collection is still redundant. As noted in \cite{key-230,Ful-LAA},
one can delete any finite collection of Horn inequalities from condition
(2) without altering the conclusion of the theorem.

We will now illustrate some of our results with a simple example where
$m=2$. Consider the sequences $\beta=\gamma'=(1/2n)_{n=1}^{\infty}$,
and positive compact operators $B_{0}=C_{0}$ such that $\Lambda_{+}(B_{0})=\beta.$
Note that $\alpha=\Lambda_{+}(B_{0}+C_{0})=(1/n)_{n=1}^{\infty}$.
On the other hand, let $C_{1}$ be a positive compact operator such
that $\Lambda_{+}(C_{1})=\gamma''=(1/(2n-1))_{n=1}^{\infty}$. Then\[
\alpha=\Lambda_{+}(B_{0}\oplus C_{1})=\Lambda_{+}(B_{0}\oplus0+0\oplus C_{1}).\]
It follows that $(\alpha,\beta,\gamma')$ and $(\alpha,\beta,\gamma'')$
satisfy all the Horn and reverse Horn inequalities, and moreover $\gamma_{n}^{\prime\prime}>\gamma'_{n}$
for all $n$. This situation is not possible in the case of summable
sequences. Indeed, the trace identity would imply that $\sum_{n=1}^{\infty}(\gamma'_{n}-\gamma_{n}^{\prime\prime})=0.$

Note moreover that for any sequence $\gamma$ such that $\gamma'\le\gamma\le\gamma''$,
the triple $(\alpha,\beta,\gamma)$ satisfies all the Horn and reverse
Horn inequalities, and therefore there exist positive compact operators
$A,B,C$ such that $\Lambda_{+}(A)=\alpha,$ $\Lambda_{+}(B)=\beta$,
$\Lambda_{+}(C)=\gamma$, and $A=B+C$. A particular case is the sequence
\[
\gamma=(1,\frac{1}{4},\frac{1}{6},\dots,\frac{1}{2n},\dots).\]
Since $\alpha_{1}+\alpha_{2}=\beta_{1}+\gamma_{1}$, it follows that
$A,B,C$ have a common reducing space $\mathfrak{M}$ of dimension 2,
such that $\Lambda(A|\mathfrak{M})=(\alpha_{1},\alpha_{2}),$ $\Lambda(B|\mathfrak{M})=(\beta_{1},0)$,
and $\Lambda(C|\mathfrak{M})=(\gamma_{1},0)$. Thus the operators $B$
and $C$ must necessarily have the eigenvalue zero. Restricting the
operators $A,B$ and $C$ to $\mathfrak{M}^{\perp}$, we see that the
sequences $\widetilde{\alpha}=(1/(n+2))_{n=1}^{\infty}$, $\widetilde{\beta}=\widetilde{\gamma}=(1/2(n+1))_{n=1}^{\infty}$
must satisfy all the Horn and extended reverse Horn inequalities.
We will return to these sequences a little later.

It is an amusing exercise to show that, in the case of finite traces,
the trace identity does follow from the direct and reverse Horn inequalities.

Let us note that the triple $(\alpha=2\beta,\beta,\gamma=(1+\varepsilon)\beta)$,
where $\beta=(1/2n)_{n=1}^{\infty}$ does not satisfy all the reverse
Horn inequalities if $\varepsilon>0$. Indeed, among these inequalities
is \[
\sum_{i=1}^{2N}\alpha_{i}\ge\sum_{j=1}^{N}\beta_{j}+\sum_{k=1}^{N}\gamma_{k}.\]
 The three sums are asymptotic to $\log N$, $(1/2)\log N$, and $((1+\varepsilon)/2)\log N$.
This can also be seen directly from the additivity of the Dixmier
trace applied to the compact operators that would have these sequences
as eigenvalues.

As pointed out in the introduction, a different characterization of
the triples $(\Lambda_{+}(B+C),\Lambda_{+}(B),\Lambda_{+}(C))$ was
given in \cite{BLS} for positive compact operators $B,C$. This involves
a continuous version of the Littlewood-Richardson rule. It was pointed
out to us by Cristian Lenart \cite{lenart} that it might be possible
to reformulate this rule in terms of honeycombs or hives \cite{KT1,KT2,buch-sat},
and we take the opportunity to do this. For our purposes, a hive will
simply be a concave function $f:[0,\infty)^{2}\to\mathbb{R}$, whose
restriction to the triangles\[
T_{ij}=\text{co}\{(i-1,j),(i-1,j-1),(i,j-1)\},\quad S_{ij}=\text{co}\{(i-1,j),(i,j),(i,j-1)\}\]
is affine for all $i,j\in\mathbb{N}$; here we use co$(S)$ to denote
the convex hull of $S$. Such a function is determined, up to an additive
constant, by the points $(x_{ij},y_{ij},z_{ij})\in\mathbb{R}^{3}$
defined by\[
x_{ij}=f(i,j-1)-f(i-1,j-1),\quad y_{ij}=f(i-1,j-1)-f(i-1,j),\quad z_{ij}=f(i-1,j)-f(i,j-1)\]
for $i,j\in\mathbb{N}$. These points form the vertices of an infinite
honeycomb in the plane $x+y+z=0$; see \cite{KT1,KT2} for a discussion
of finite honeycombs. Decreasing sequences $\alpha,\beta,$ and $\gamma$
converging to zero and such that $\alpha\ge\beta$ satisfy the continuous
Littlewood-Richardson rule if and only if there exists a hive $f$
such that \[
x_{i0}=\alpha_{i},\quad y_{0j}=-\beta_{j},\quad\lim_{j\to\infty}z_{ij}=-\gamma_{i}\quad\text{for }i,j\in\mathbb{N}.\]
Generally it is rather difficult to construct a hive $f$ realizing
this rule. We will construct a hive for the particular case of the
sequences $\widetilde{\alpha},\widetilde{\beta}$ and $\widetilde{\gamma}$
considered above. Note that the hive will be determined up to an additive
constant from the values of $x_{i0}$, $y_{0j}$ and $z_{ij}$. The
reader will verify without difficulty that the following values do
yield a hive:\[
z_{ij}=\frac{1}{2}\left[\frac{1}{i+j+1}-\frac{1}{i+1}\right],\quad i,j\in\mathbb{N}.\]

\end{document}